\renewcommand{\eqref}[1]{(\ref{#1})}
\newtheorem{theorem}{Theorem}[section]
\newtheorem{lemma}[theorem]{Lemma}
\numberwithin{equation}{section}
\begin{document}
	
	\newcommand{\mMC}{\text{MC}}
	\newcommand{\md}{\text{d}}
	\newcommand{\comm}[2][red]{\textbf{\textcolor{#1}{#2}}} 
	\newcommand{\imp}[1]{\textbf{\emph{#1}}} 
	\newcommand{\eqd}{$\coloneqq$ } 
	\newcommand{\meqd}{\coloneqq} 
	\newcommand{\mad}{\text{ad} }
	\newcommand{\minc}[1]{\underset{#1}{\in}}
	\newcommand{\meqc}[1]{\underset{#1}{=}}
	\newcommand{\mtr}{\text{tr}}
	\newcommand{\abs}[1]{\lvert #1 \rvert}
	\newcommand{\ecomm}[2][blue]{\textbf{\textcolor{#1}{#2}}} 
	\newcommand{\rnum}[1]{\uppercase\expandafter{\romannumeral #1\relax}}
	\newcommand{\mId}{\text{Id}}
	\newcommand{\mand}{\text{and}}
	\newcommand{\mst}{\text{s.t.}}
	\newcommand{\mmF}{\mathfrak{F}}
	\newcommand{\mmS}{\mathfrak{S}}
	\newcommand{\mmMC}{\mathfrak{MC}_\bullet}
	\newcommand{\mHom}{\text{Hom}}
	\newcommand{\mcC}{\mathcal{C}}
	\newcommand{\mcA}{\mathcal{A}}
	\newcommand{\mcD}{\mathcal{D}}
	\newcommand{\mcO}{\mathcal{O}}
	\newcommand{\mObj}{\text{Obj}}
	\newcommand{\mop}{\text{op}}
	\newcommand{\mSimp}{\text{Simp}}
	\newcommand{\mSet}{\text{Set}}
	\newcommand{\mcoker}{\text{coker}}
	\newcommand{\mker}{\text{ker}}
	\newcommand{\mIm}{\text{Im}}
	\newcommand{\mcone}{\text{cone}}
	\newcommand{\mmod}{\text{mod}}
	\newcommand{\mwith}{\text{with}}
	\newcommand{\mnew}{\text{new}}
	\newcommand{\mby}{\text{by}}
	\newcommand{\mmin}{\text{min}}
	\newcommand{\ubr}[1]{\underbrace{#1}}
	\newcommand{\nonu}{\nonumber}
	\newcommand{\mMF}{\mmF}
	\newcommand{\minitial}{\text{initial}}
	\newcommand{\mcurv}{\text{curv}}
	\newcommand{\mhK}{\mathbb{K}} 
	\newcommand{\mStub}{\text{Stub}}

	\title{A variation of the Goldman-Millson Theorem for filtered $L_\infty$ algebras}
	
	\author{Silvan Schwarz}
	\address{ETH Z\"urich}
	\email{silvan.schwarz@math.ethz.ch}

	\date{\today}
	
	\begin{abstract}
		In this paper, we extend the Goldman-Millson Theorem for $L_\infty$ algebras.\\
		We consider two $L_\infty$ algebras $L$ and $\tilde{L}$ endowed with descending, bounded above and complete filtrations compatible with the $L_\infty$ structures and ${U:L \rightarrow \tilde{L}}$ an $\infty$-morphism respecting the filtrations.\\
		We prove that in the setting of the linear part of $U$, say $\psi$, being a quasi-isomorphism on the r-1st page of the spectral sequences and\\
		${H^1 ((\mmF_{2^q} L)/(\mmF_{\mmin(2^{q+1},r)} L))=0}$ for every $q$ with $2^q < r$ and ${H^i((\mmF_1 \tilde{L}) / (\mmF_q \tilde{L}))=0}$ for $i=0,1$ and $q$ every power of 2 smaller than $r$ and $q=r$ this induces a weak homotopy equivalence of the simplicial sets $\mathfrak{MC}_\bullet (L)$ and $\mathfrak{MC}_\bullet (\tilde{L})$.
	\end{abstract}
	\maketitle
	\tableofcontents
	\section{Introduction}
	The Goldman-Millson Theorem was first introduced in \cite{GMTOriginal}, where it was formulated in the context of differential graded Lie algebras. Later, the foundational paper \cite{Getzler} by Getzler and the subsequent paper \cite{DolgushevLinfty} by Dolgushev and Rogers generalised the Goldman-Millson Theorem to $L_\infty$ algebras.\\
	They showed that for $L$ and $\tilde{L}$, two $L_\infty$ algebras equipped with descending, bounded above and complete filtrations compatible with the $L_\infty$ structures and ${U: L \rightarrow \tilde{L}}$ an $\infty$-morphism compatible with the filtrations, there is the following implication:
	If the linear term $\psi$ of $U$ forms a quasi-isomorphism ${\psi \vert_{\mmF_n L}: \mmF_n L \rightarrow \mmF_n \tilde{L}}$ for every $n \geq 1$, then $U$ induces a weak homotopy equivalence ${ \mmMC(U): \mmMC(L) \rightarrow \mmMC(\tilde{L}) }$.\\
	In this paper, we prove an analogue of this statement with a weaker quasi-isomorphism condition for the linear part of $U$, say $\psi$. Instead of demanding $\psi \vert_{\mmF_n L}$ to be a quasi-isomorphism for every $n$, we merely need $\psi$ (no restriction) to be a quasi-isomorphism on the r-1st page of the spectral sequences induced by the filtered complexes $(L, d_L)$ and $(\tilde{L}, d_{\tilde{L}})$ for some arbitrary $r$.\\
	At the cost of being able to weaken the quasi-isomorphism condition, we have to account for the additional requirement of vanishing co-homologies\\
	$H^1 ((\mmF_{2^q} L)/(\mmF_{\mmin(2^{q+1},r)} L))=0$ for every $q$ with $2^q < r$ and $H^i((\mmF_1 \tilde{L}) / (\mmF_q \tilde{L}))=0$ for $i=0,1$ and $q$ every power of 2 smaller than $r$ and $q=r$.\\
	A central benefit of our result is that it allows to conduct the search of weak homotopy equivalence amongst Maurer-Cartan spaces on the level of spectral sequences. One typical application of this can be found in the study of homotopy automorphims of the little discs operads (c.f. \cite{FresseTW}, Theorem 29).\\
	The main result of this paper is:
	\begin{theorem}
		\label{Theorem W15F1}\hfill\\
		Let $L$ and $\tilde{L}$ be two $L_\infty$ algebras equipped with descending, bounded above and complete filtrations
		\begin{align*}
			\begin{aligned}
				L= \mmF_1 L \supset \mmF_2 L \supset \mmF_3 L \supset \ldots\\
				\tilde{L}= \mmF_1 \tilde{L} \supset \mmF_2 \tilde{L} \supset \mmF_3 \tilde{L} \supset \ldots
			\end{aligned}
		\end{align*}
		compatible with the $L_\infty$ structures.\\
		Let $U:L \rightarrow \tilde{L}$ be an $\infty$-morphism of $L_\infty$ algebras compatible with the filtrations that has its linear part, say $\psi$, being a quasi-isomorphism on the r-1st page\footnote{To avoid ambiguity: The quasi-isomorphism condition on the r-1st page has to be understood in the sense of being equivalent to $E_r (\mcone (\psi))=0$.} of the spectral sequences induced by the filtered complexes $(L,d_L)$ and $(\tilde{L}, d_{\tilde{L}})$ for some arbitrary $r$.\\
		Moreover, let us assume $H^1 ((\mmF_{2^q} L)/(\mmF_{\mmin(2^{q+1},r)} L))=0$ for every $q$ with $2^q < r$ and $H^i((\mmF_1 \tilde{L}) / (\mmF_q \tilde{L}))=0$ for $i=0,1$ and $q$ every power of 2 smaller than $r$ and $q=r$.\footnote{The requirements for $\tilde{L}$ can equivalently be formulated as $H^i ((\mmF_{2^q} \tilde{L})/(\mmF_{\mmin(2^{q+1},r)} \tilde{L}))=0$ for every $q$ with $2^q < r$ and $i=0,1$. $H^1 ((\mmF_{2^q} \tilde{L})/(\mmF_{\mmin(2^{q+1},r)} \tilde{L}))=0$ for every $q$ with $2^q < r$ and $H^0 ((\mmF_1 \tilde{L})/(\mmF_r \tilde{L}))=0$ would be another possible choice.}\\
		Then $U$ induces a weak homotopy equivalence of simplicial sets:
		\begin{align*}
			\mmMC(U):\mmMC(L) \rightarrow \mmMC(\tilde{L}).
		\end{align*}
	\end{theorem}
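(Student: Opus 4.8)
The plan is to follow the strategy of Getzler and of Dolgushev--Rogers, reducing the weak homotopy equivalence of $\mmMC_\bullet$ to a statement about each simplicial degree and then to an obstruction-theoretic comparison, but to replace their ``quasi-isomorphism on each $\mmF_n$'' hypothesis by a dyadic induction that feeds on the hypotheses about $E_r(\mcone(\psi))=0$ and the vanishing subquotient cohomologies.

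First I would recall that for a filtered $L_\infty$ algebra $L$ one has $\mmMC_\bullet(L) = \lim_n \mmMC_\bullet(L/\mmF_n L)$, since the filtration is complete, and that $\mmMC_\bullet(L/\mmF_n L)$ is built up from $\mmMC_\bullet(L/\mmF_{n+1}L)$ by a principal twisted Cartesian product / tower of fibrations whose fibers are governed by the abelian $L_\infty$ algebra $(\mmF_n L/\mmF_{n+1}L)\otimes\Omega_\bullet$ with differential the linear part $d_L$. Thus $\pi_i$ of the fiber at a given basepoint is a subquotient of $H^{1-i}$ of the associated graded piece. The map $\mmMC_\bullet(U)$ is a map of such towers, compatible with the fibration structure, so by Bousfield--Kan / a Milnor exact sequence argument it suffices to show $\mmMC_\bullet(U)$ induces a weak equivalence on each finite quotient $L/\mmF_n L \to \tilde L/\mmF_n\tilde L$; and for that it suffices, by the long exact sequences of the fibrations and induction on $n$, to control the effect of $\psi$ on the relevant cohomology of the associated graded.

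The heart of the argument — and the point where the new hypotheses enter — is a \emph{dyadic} induction rather than a step-by-step one. Because $E_r(\mcone(\psi))=0$, the induced map is an iso on $E_r$, hence (by the usual spectral-sequence-of-a-filtered-complex bookkeeping) $\psi$ induces isomorphisms on $H^\bullet$ of the quotients $L/\mmF_r L \to \tilde L/\mmF_r\tilde L$ up to the ``error terms'' measured by $E_s$ for $s<r$; the hypotheses $H^1((\mmF_{2^q}L)/(\mmF_{\min(2^{q+1},r)}L))=0$ and $H^i((\mmF_1\tilde L)/(\mmF_q\tilde L))=0$ for $i=0,1$ are precisely what is needed to kill those obstruction and ambiguity groups at the dyadic stages $q=1,2,4,\dots$ up to $r$. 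So I would prove by induction on $q$ (powers of $2$) the statement: $\mmMC_\bullet(U)$ restricted to $L/\mmF_{\min(2^q,r)}L \to \tilde L/\mmF_{\min(2^q,r)}\tilde L$ is a weak homotopy equivalence. The inductive step compares the two towers over the interval $[\,2^q, \min(2^{q+1},r)\,)$: one gets a ladder of fibration sequences, the $H^0$- and $H^1$-vanishing on the $\tilde L$-side guarantees the fibers on the target are homotopically trivial in the relevant range and that no new $\pi_0$ or $\pi_1$ obstructions appear, while the $H^1$-vanishing on the $L$-side guarantees surjectivity of the comparison on that interval; combined with $E_r(\mcone(\psi))=0$ at the top stage $q$ with $2^q<r\le 2^{q+1}$ one gets the equivalence on $L/\mmF_r L$. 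Finally, one bootstraps from $\mmF_r$ to all $\mmF_n$, $n\ge r$: on that tail, the linear part of $U$ is already a genuine quasi-isomorphism of filtered complexes in the range we need (indeed $E_r$ being an isomorphism propagates), so the classical Dolgushev--Rogers argument applies verbatim to $\mmMC_\bullet(\mmF_r L)\to\mmMC_\bullet(\mmF_r\tilde L)$ as a map of fibers over the basepoint, and gluing with the equivalence on the quotient $L/\mmF_r L$ via the long exact sequence (or five-lemma) of the fibration finishes the proof.

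The main obstacle, I expect, is the bookkeeping in the dyadic inductive step: keeping careful track of which cohomological subquotient controls $\pi_i$ of each fiber at a \emph{twisted} (non-zero) Maurer--Cartan basepoint — the twisting shifts the differential by the basepoint, but since the relevant associated-graded pieces are abelian and the filtration is compatible, the twisting does not change the cohomology groups in question — and verifying that the two footnoted reformulations of the $\tilde L$-hypotheses really do suffice, i.e. that every obstruction group that appears in the comparison of towers over $[1,r)$ is one of the groups assumed to vanish. Getting the indices to line up (the $\min(2^{q+1},r)$ truncation, and the separate treatment of the top stage and of the $q=r$ condition for $\tilde L$) is where the real work lies; everything else is an adaptation of the Getzler / Dolgushev--Rogers machinery.
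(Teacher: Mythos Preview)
Your approach is genuinely different from the paper's, and it has a real gap in the ``tail'' step.

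The paper does not argue via the tower $\mmMC_\bullet(L/\mmF_n L)$ and Bousfield--Kan at all. Instead it splits the weak equivalence into two independent statements: a bijection on $\pi_0$ (Theorem~\ref{Theorem W15F2}) and isomorphisms on $\pi_n$ for $n\geq 1$ (Theorem~\ref{Theorem W15F3}). For $\pi_0$ the paper works entirely by hand: given $b\in\mMC(\tilde L)$ it builds, by an explicit induction on filtration degree, sequences $(a_p)$, $(b_p)$ and rectified $1$-cells with $U_\star(a_p)\equiv b_p \bmod \mmF_p\tilde L$; the spectral-sequence hypothesis is used in its unwound form (the implication \eqref{W14B14a}$\Rightarrow$\eqref{W14B14}), and the vanishing hypotheses enter only through two preparatory lemmas (Lemma~\ref{Lemma W19A1}, which gauges any MC element into $\mmF_r$, and Lemma~\ref{Lemma W14Z1}). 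Injectivity is handled by a parallel explicit construction. For $\pi_n$ with $n\geq 1$ the paper is much shorter than your tower argument: it invokes Berglund's isomorphism $\pi_n(\mmMC(L),\tau)\cong H^{1-n}(L^\tau)$, checks that twisting by $\tau\in\mmF_r L$ does not disturb the condition $E_r(\mcone(\psi^\tau))=0$ (since the twisted and untwisted differentials agree modulo $\mmF_{p+r}$), and then applies the Eilenberg--Moore comparison theorem once to conclude $H(\psi^\tau)$ is an isomorphism. No fibration ladder, no dyadic induction on the target side.

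The gap in your outline is the sentence ``on that tail, the linear part of $U$ is already a genuine quasi-isomorphism of filtered complexes in the range we need\dots so the classical Dolgushev--Rogers argument applies verbatim to $\mmMC_\bullet(\mmF_r L)\to\mmMC_\bullet(\mmF_r\tilde L)$.'' Dolgushev--Rogers needs $\psi|_{\mmF_n}$ to be a quasi-isomorphism for \emph{every} $n$, and $E_r(\mcone(\psi))=0$ does not give this: the spectral sequence of the subcomplex $\mmF_n\mcone(\psi)$ differs from the restriction of the ambient one near the boundary $p=n$, because classes that were killed by differentials coming from $\mmF_{n-s}$ are no longer boundaries. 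So ``$E_r$ being an isomorphism propagates'' is not true in the sense you need. The paper sidesteps this entirely by never restricting to $\mmF_n$: Eilenberg--Moore on the \emph{full} twisted complex gives $H(\psi^\tau)$ an isomorphism directly, and Berglund converts that into the statement about $\pi_n$. If you want to salvage a tower argument, you would have to replace the Dolgushev--Rogers black box on the tail by this Berglund/Eilenberg--Moore step anyway --- at which point the tower machinery for $n\geq 1$ is superfluous, and the only place the dyadic bookkeeping is really doing work is in the $\pi_0$ statement, where the paper's explicit construction is likely cleaner than chasing fibration long exact sequences.
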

	This Theorem is a direct consequence of the following two:
	\begin{theorem}
		\label{Theorem W15F2}\hfill\\
		Let $U: L \rightarrow \tilde{L}$ be an $\infty$-morphism satisfying the conditions of Theorem \ref{Theorem W15F1}.\\
		Then $\mmMC(U)$ induces a bijection amongst the connected components:
		\begin{align*}
			\pi_0 ( \mmMC(L)) \stackrel{\cong}{\rightarrow} \pi_0 (\mmMC(\tilde{L})).
		\end{align*}
	\end{theorem}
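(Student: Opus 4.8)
The plan is to reduce the statement to two independent lifting problems -- surjectivity and injectivity of the induced map on Maurer--Cartan elements modulo gauge -- and to solve each by an obstruction-theoretic induction along the \emph{dyadic} tower of filtration quotients. By the standard description of Getzler's simplicial set \cite{Getzler, DolgushevLinfty}, $\pi_0(\mmMC(L))$ is $\mMC(L)$, the set of Maurer--Cartan elements of $L$, modulo gauge equivalence, and $\pi_0(\mmMC(U))$ is induced by the push-forward $\alpha \mapsto U_*(\alpha) = \sum_{n \geq 1}\tfrac{1}{n!}U_n(\alpha,\dots,\alpha)$ (with $U_1=\psi$), which is well defined and carries gauge-equivalent elements to gauge-equivalent ones because the filtrations are complete and $U$ respects them. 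So I must show \textbf{(S)}: every gauge class in $\mMC(\tilde{L})$ meets the image of $U_*$, and \textbf{(I)}: if $U_*(\alpha_0)$ and $U_*(\alpha_1)$ are gauge equivalent in $\tilde{L}$, then $\alpha_0$ and $\alpha_1$ are gauge equivalent in $L$.

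For the inductive set-up I would use that $\{2^q\}_{q \ge 0}$ is cofinal among the filtration indices and both filtrations are complete, so that $\mMC(L) = \varprojlim_q \mMC(L/\mmF_{2^q}L)$, the analogous statement holds for $\tilde{L}$, and the associated gauge groupoids are likewise inverse limits of their truncations. Both (S) and (I) then become: given a solution modulo $\mmF_{2^q}$, extend it modulo $\mmF_{2^{q+1}}$ (or modulo $\mmF_r$ when $2^q < r \le 2^{q+1}$, after which one reverts to ordinary one-step lifting). The reason the blocks must be dyadic, rather than single filtration steps as in Dolgushev--Rogers, is that $\psi$ is only a quasi-isomorphism on the $(r-1)$st page: the map on associated gradeds carries no information, and the hypothesis $E_r(\mcone(\psi))=0$ controls only the cohomology of the $r$-step quotients $\mmF_p/\mmF_{p+r}$; meanwhile the Maurer--Cartan equation and the gauge action are quadratic-and-higher, so a bracket of two elements of $\mmF_{2^q}$ lands in $\mmF_{2^{q+1}}$, and choosing blocks of exactly this size is what prevents the nonlinear terms from leaking past a block boundary in the delicate low-weight regime $2^q < r$. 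This is precisely why the hypotheses are indexed by powers of $2$ and by $\mmin(2^{q+1},r)$.

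The first substantive step is then to convert the spectral-sequence hypothesis $E_r(\mcone(\psi))=0$, together with the vanishing of $H^1$ on the $L$-blocks and of $H^0,H^1$ on the $\tilde{L}$-blocks and on $(\mmF_1\tilde{L})/(\mmF_r\tilde{L})$, into the concrete homological statements the obstruction argument consumes: that for each relevant pair $p<p'$ (a consecutive pair of powers of $2$ up to $r$, then $r$, then consecutive integers beyond $r$) the map
\begin{align*}
\psi \colon H^i\bigl((\mmF_p L)/(\mmF_{p'} L)\bigr) \longrightarrow H^i\bigl((\mmF_p \tilde{L})/(\mmF_{p'}\tilde{L})\bigr)
\end{align*}
is an isomorphism for $i \le 1$ and injective for $i=2$, and that source and target vanish in the degrees the hypotheses prescribe. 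In the stable range $p \ge r$ this follows from $E_r(\mcone(\psi))=0 \Rightarrow E_\infty(\mcone(\psi))=0$ together with completeness and boundedness of the filtration; in the range $p<r$ it must be extracted from $d_r$ on the cone's spectral sequence combined with the auxiliary vanishings, threaded through the long exact sequences of the block filtrations. I expect this bookkeeping -- matching spectral-sequence pages to block quotients and verifying that the degrees line up with exactly what the Maurer--Cartan and gauge obstructions require -- to be the main obstacle.

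With those facts available, (S) is the assertion that a Maurer--Cartan element $\alpha_q$ of $L/\mmF_p L$ whose image $U_*(\alpha_q)$ is gauge equivalent to the reduction of the given $\tilde\alpha$ can be lifted to $L/\mmF_{p'}L$ keeping this property: lift $\alpha_q$ arbitrarily, read off the failure of the Maurer--Cartan equation and of the gauge relation as classes in $H^2$ and $H^1$ of the appropriate blocks, transport them to $\tilde{L}$ where they vanish because $\tilde\alpha$ already solves everything there, and use the isomorphism/injectivity above (or, on the $L$-side, the hypothesized vanishing of $H^1$ of the block) to correct $\alpha_q$ inside $\mmF_p L/\mmF_{p'}L$ and the comparison gauge inside $\mmF_p \tilde{L}^0/\mmF_{p'}\tilde{L}^0$; then pass to the limit over $q$, which converges by completeness. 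Statement (I) is the parallel induction that builds a gauge $\alpha_0 \sim \alpha_1$ one block at a time, the obstruction to extending it lying in $H^1$ of the current $L$-block, which vanishes by hypothesis (with the $\tilde{L}$-block vanishings of $H^0,H^1$ used to kill the ambiguity in, and the obstruction to extending, the comparison gauge on the $\tilde{L}$ side, and the $(\mmF_1\tilde{L})/(\mmF_r\tilde{L})$ hypothesis covering the boundary block); the stable range $p \ge r$ again uses only $E_r(\mcone(\psi))=0$. Combining (S) and (I) yields the claimed bijection $\pi_0(\mmMC(L)) \stackrel{\cong}{\rightarrow} \pi_0(\mmMC(\tilde{L}))$.
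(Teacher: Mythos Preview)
Your overall architecture---reduce to surjectivity and injectivity of $U_\star$ on $\mMC/\!\sim$, then lift through the tower---matches the paper's. But the step you flag as ``bookkeeping'' is not bookkeeping; it is where the argument actually lives, and as stated your conversion is false.

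Concretely: you want, for consecutive integers $p<p'=p+1$ in the stable range $p\ge r$, that $\psi$ induces an isomorphism $H^i(\mmF_pL/\mmF_{p+1}L)\to H^i(\mmF_p\tilde L/\mmF_{p+1}\tilde L)$ for $i\le 1$ and an injection for $i=2$. That is an $E_1$ statement, and $E_r(\mcone(\psi))=0$ says nothing about $E_1$; passing to $E_\infty=0$ only gives that $\psi\colon L\to\tilde L$ is a quasi-isomorphism, not that any restriction $\mmF_pL\to\mmF_p\tilde L$ or any graded piece is. The elementwise content of $E_r(\mcone(\psi))=0$ is (see the paper's equations \eqref{W14B14a}--\eqref{W14B14}): given $(a,b)\in\mmF_p(L\oplus\tilde L)$ with cone-differential in $\mmF_{p+r}$, one finds a primitive $(x,y)\in\mmF_{p-r+1}$---\emph{not} in $\mmF_p$---killing $(a,b)$ modulo $\mmF_{p+1}$. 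So the correction leaks $r-1$ steps \emph{below} the current block, and your tidy picture in which obstructions and corrections both live in $\mmF_p/\mmF_{p'}$ cannot hold.

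The paper therefore does not attempt any block-quotient quasi-isomorphism. Instead it runs the induction one step at a time from $p=r+1$, using \eqref{W14B14} directly to produce $(\tilde a,y)\in\mmF_{p-r+1}$, and then repairs the filtration deficit by hand: the dyadic hypotheses enter only through the preparatory Lemma~\ref{Lemma W19A1} (pushing the initial MC element into $\mmF_r$) and Lemma~\ref{Lemma W14Z1}, and the induction splits into the awkward range $r+1\le p<2r-1$ (where $\tilde a\in\mmF_{p-r+1}$ need not lie in $\mmF_r$, and one uses the $H^0$-vanishing on $L$ and $H^{-1}$-vanishing on $\tilde L$ to adjust $\tilde a$ and $y$) versus the clean range $p\ge 2r-1$. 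For injectivity the paper also needs an explicit concatenation-of-1-cells lemma (Lemma~\ref{Lemma W19A2}) to propagate the gauge on the $\tilde L$ side. If you want to salvage an obstruction-theoretic presentation, the honest move is to work in the cone throughout and take your ``blocks'' to be overlapping windows of width $r$, accepting that corrections sit $r-1$ steps below---but then you will have re-derived the paper's argument rather than simplified it.
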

	\begin{theorem}
		\label{Theorem W15F3}\hfill\\
		Let $U: L \rightarrow \tilde{L}$ be an $\infty$-morphism satisfying the conditions of Theorem \ref{Theorem W15F1}, except $H^i ((\mmF_1 \tilde{L}) /(\mmF_q \tilde{L} ))=0$, which no longer has to be imposed.\\
		Then for every Maurer-Cartan element $\tau \in \mMC(L)$, $\mmMC(U)$ induces group isomorphisms of higher $(n \geq 1)$ homotopy groups:
		\begin{align*}
			\pi_n (\mmMC(L), \tau) \stackrel{\cong}{\rightarrow} \pi_n (\mmMC(\tilde{L}), U_\star (\tau)).
		\end{align*}
	\end{theorem}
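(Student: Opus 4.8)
The plan is to use the twisting construction to transport the whole question to the base point $0$, then Getzler's description of the homotopy groups of $\mmMC_\bullet$ to recast it as a purely homological statement about the linear part $\psi$, and finally to observe that the hypotheses that remain are exactly what makes that statement true.

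\emph{Moving the base point.} First I would push $\tau$ down the filtration, using $H^1\bigl((\mmF_{2^q}L)/(\mmF_{\mmin(2^{q+1},r)}L)\bigr)=0$. If $\tau\in\mmF_{2^q}L$, every bracket $\ell_k(\tau,\dots,\tau)$ with $k\ge2$ lies in $\mmF_{2^{q+1}}L$, so the Maurer-Cartan equation gives $d_L\tau\in\mmF_{2^{q+1}}L\subseteq\mmF_{\mmin(2^{q+1},r)}L$; thus $\tau$ represents a degree-$1$ class in the vanishing group $H^1\bigl((\mmF_{2^q}L)/(\mmF_{\mmin(2^{q+1},r)}L)\bigr)$, so there is $\xi\in(\mmF_{2^q}L)^0$ with $\tau-d_L\xi\in\mmF_{\mmin(2^{q+1},r)}L$, and gauging $\tau$ by $\xi$ lands it in $\mmF_{\mmin(2^{q+1},r)}L$. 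Running this over $q=0,1,2,\dots$ (finitely many steps) replaces $\tau$ by a gauge equivalent Maurer-Cartan element in $\mmF_r L$. Since $\pi_n(\mmMC(L),\tau)$ depends only on the path component of $\tau$ and $\mmMC(U)$ takes paths to paths, we may assume $\tau\in\mmF_r L$, and then $U_\star(\tau)\in\mmF_r\tilde L$ automatically, the components $U_k$ being compatible with the filtrations.

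\emph{Twisting and transporting the hypotheses.} Form the twisted algebras $L^\tau$, $\tilde L^{U_\star(\tau)}$ and the twisted $\infty$-morphism $U^\tau\colon L^\tau\to\tilde L^{U_\star(\tau)}$, with linear part $\psi^\tau$. Twisting by a Maurer-Cartan element is an isomorphism of simplicial sets $\mmMC(\mathfrak g)\xrightarrow{\ \cong\ }\mmMC(\mathfrak g^\eta)$ carrying $\eta$ to $0$, natural in $\infty$-morphisms, so it is enough to show that $\mmMC(U^\tau)$ induces isomorphisms on $\pi_n(-,0)$ for $n\ge1$. Because $\tau\in\mmF_r L$ and $U_\star(\tau)\in\mmF_r\tilde L$, the twisted brackets and the components of $U^\tau$ differ from the untwisted ones only by operators that raise the filtration by at least $r$. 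Hence: $L^\tau$ and $\tilde L^{U_\star(\tau)}$ carry the same descending, bounded above, complete, compatible filtrations; the spectral sequence pages $E_s$ for $s\le r$ are unaffected, so in particular $E_r(\mcone(\psi^\tau))=E_r(\mcone(\psi))=0$; and the quotient complexes $(\mmF_{2^q}(\cdot))/(\mmF_{\mmin(2^{q+1},r)}(\cdot))$, being concentrated in filtration degrees $<r$, are literally unchanged, so their cohomology still vanishes. Thus $\psi^\tau$ inherits all the hypotheses of the theorem.

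\emph{The homological core.} By Getzler's theorem (\cite{Getzler}, \cite{DolgushevLinfty}) there are natural isomorphisms $\pi_n(\mmMC(\mathfrak g),0)\cong H^{1-n}(\mathfrak g)$ for $n\ge1$, with respect to which an $\infty$-morphism acts through the cohomology map induced by its linear part; since a bijective homomorphism is an isomorphism, the theorem reduces to showing that $\psi^\tau$ induces an isomorphism on $H^i$ for every $i\le0$, equivalently that $H^i(\mcone(\psi^\tau))=0$ for $i\le0$. This is the step I expect to be the crux. The difficulty is that $E_r(\mcone(\psi^\tau))=0$ does not by itself force acyclicity in low degrees: for a filtration that is only bounded above and complete the spectral sequence converges merely conditionally, and a $\varprojlim^{1}$-type obstruction can survive. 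The vanishing hypotheses are precisely what removes it in the range relevant to $\pi_{\ge1}$. Concretely one would pass to the finite-filtration quotients $\mcone(\psi^\tau)/\mmF_p$, whose spectral sequences genuinely converge and whose low-degree $E_r$-pages already coincide with the vanishing $E_r(\mcone(\psi^\tau))$; the vanishing of $H^1$ of the dyadic quotients then feeds a doubling induction — this is where the powers of $2$ and the truncation at $r$ enter — that disposes of the contribution of the topmost filtration degrees, yielding $H^i(\mcone(\psi^\tau)/\mmF_p)=0$ for $i\le0$ and all $p$; passing to the inverse limit (now with trivial $\varprojlim^1$) gives $H^i(\mcone(\psi^\tau))=0$ for $i\le0$, and the theorem follows. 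The $\tilde L$-vanishing conditions play no role in this argument; they are needed only in Theorem \ref{Theorem W15F2}, where one must also control $\mMC$ and $H^1$ on the $\tilde L$-side, which is why they may be dropped here.
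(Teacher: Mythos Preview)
Your first two steps are correct and coincide with the paper's argument: Lemma~4.3 pushes $\tau$ into $\mmF_r L$ using exactly the dyadic vanishing hypothesis on $L$, and Lemma~6.2 shows that twisting by such a $\tau$ perturbs $d_L$, $d_{\tilde L}$ and $\psi$ only by terms raising filtration by at least $r$, so $E_r(\mcone(\psi^\tau))=0$ is inherited from $E_r(\mcone(\psi))=0$.

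The divergence, and the weak point of your proposal, is the third step. You anticipate a $\varprojlim^{1}$ obstruction and plan to eliminate it by a ``doubling induction'' driven by the dyadic vanishing hypotheses. This misreads both the analytic situation and the role of those hypotheses. The filtrations on $L^\tau$ and $\tilde L^{U_\star(\tau)}$, hence on $\mcone(\psi^\tau)$, are inherited from $L$ and $\tilde L$ and are therefore exhaustive ($\mmF_1=$ everything) and complete. The Eilenberg--Moore Comparison Theorem (Weibel~5.5.11), which the paper invokes at this exact point, then gives directly that $E_r(\mcone(\psi^\tau))=0$ forces $H(\psi^\tau)$ to be an isomorphism in \emph{all} degrees, with no further input. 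The dyadic vanishing conditions on $L$ play no role once $\tau\in\mmF_r L$; their only job in this theorem is your step~1. The doubling induction on finite-filtration quotients that you sketch is neither needed nor actually carried out, and as described it would not obviously assemble into the required statement.

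After Eilenberg--Moore, the paper finishes exactly as you suggest, citing Berglund's Theorem~5.5 (your Getzler/Dolgushev--Rogers reference) for the natural isomorphism $\pi_n(\mmMC(L),\tau)\cong H^{1-n}(L^\tau)$ and the commuting square that identifies $\pi_n(\mmMC(U))$ with $H^{1-n}(\psi^\tau)$.
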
\hfill\\
	The conditions on co-homologies and filtrations can severely be weakened in exchange for requiring $L$ to be Abelian and $U$ to raise the degree of filtration by $r-1$. A typical application for this additional variation would be the computation of automorphism groups as in e.g. \cite{FresseTW}.\newline
	There one studies the situation in which a dg Lie algebra acts on a deformation complex. In particular, one obtains an $\infty$-morphism from an Abelian $L_\infty$ algebra to an $L_\infty$ algebra by looking at the action of the Abelian part of the dg Lie algebra on the deformation complex (cf. \cite{FresseTW} Section 4.5).
	\begin{theorem}
		\label{Theorem ZA1}\hfill\\
		Let $L$ and $\tilde{L}$ be two $L_\infty$ algebras equipped with descending, bounded above and complete filtrations
		\begin{align*}
			\begin{aligned}
				L= \mmF_0 L \supset \mmF_1 L \supset \mmF_2 L \supset \ldots\\
				\tilde{L}= \mmF_1 \tilde{L} \supset \mmF_2 \tilde{L} \supset \mmF_3 \tilde{L} \supset \ldots
			\end{aligned}
		\end{align*}
		compatible with the $L_\infty$ structures. In addition, let $L$ be Abelian.\\
		Let $U:L \rightarrow \tilde{L}$ be an $\infty$-morphism of $L_\infty$ algebras compatible with the filtrations that has its linear part, say $\psi$, being a quasi-isomorphism on the r-1st page of the spectral sequences induced by the filtered complexes $(L,d_L)$ and $(\tilde{L}, d_{\tilde{L}})$ for some arbitrary fixed $r$. Moreover, let us assume $U$ to raise the degree of filtration by $r-1$\footnote{See Equation \eqref{Raise}} and $U_\star$ to be a finite sum\footnote{See Equation \eqref{W14B8}. This is equivalent to demanding that for every $a \in L$ there exists a $n \in \mathbb{N}$ such that for $U^\prime (\ubr{a, \ldots, a}_{\text{n-times}})=0$ for all $n \geq N$.}.\\
		Let also $H^1((\mmF_{2^q} \tilde{L})/(\mmF_{\mmin (2^{q+1},r)} \tilde{L}))=0$ hold for every $q$ with $2^q < r$.\\
		Then $U$ induces a weak homotopy equivalence of simplicial sets:
			\begin{align*}
				\mmMC(U):\mmMC(L) \rightarrow \mmMC(\tilde{L}).
		\end{align*}
	\end{theorem}\hfill\\
	To facilitate the notation, we prove the theorems in the setting of shifted $L_\infty$ algebras. Nevertheless, due to the $\mathbb{Z}$-grading, they are still applicable to $L_\infty$ algebras.\\
	In Section \ref{Preliminaries}, we introduce some basic notation. Theorem \ref{Theorem W15F2} constitutes the main part of this paper, so we dedicate Sections \ref{Preparation} and \ref{Bijection} to its preparation and proof. We deal with higher homotopy groups in Section \ref{Higher}. A proof of Theorem \ref{Theorem ZA1} is given in Appendix \ref{Zusatzfrage}.
	\section{Acknowledgements}
	I would like to thank Thomas Willwacher for his great support during the supervision of the master project which led to this paper. Whenever I encountered difficulties, he made sure to take the time to answer my questions. I will remain indebted to him for having given me the chance for entering this interesting field of mathematics.\\
	The author has been partially supported by the ERC starting grant 678156 GRAPHCPX.
	\section{Preliminaries}
	\label{Preliminaries}
	Following \cite{DolgushevLinfty}, we work in the category of shifted $L_\infty$ algebras, denoted by $\mmS L_\infty$ (also see \cite{DolgushevEnhancement}). In short, an $\mmS L_\infty$ structure on $V$ can be thought as an $L_\infty$ structure on the suspension $V[-1]$. We will extensively borrow the notation from the two aforementioned papers.\\
	In the category of $\mmS L_\infty$ algebras, the Maurer-Cartan elements are defined by
	\begin{align}
		\label{W14B1}
		\mMC(L) \meqd \{ \alpha \in L^0 \vert \mcurv (\alpha) =0 \},
	\end{align}
	where $\mcurv$ is given by
	\begin{align}
		\label{W14B2}
		\mcurv (\alpha) \meqd \sum_{k=1}^{\infty} \frac{1}{k!} \{ \alpha, \ldots , \alpha  \}_k.
	\end{align}
	An advantage of working in $\mmS L_\infty$ is the simpler form of the Maurer-Cartan equation (compare Equation \eqref{W14B1} with e.g. Equation (1.7) from \cite{DolgushevLinfty}).\\
	In the theorems, we work with $\mmS L_\infty$ algebras endowed with descending, bounded above and complete filtrations compatible with the $\mmS L_\infty$ structures. This means for $L$ being of such type it must satisfy
	\begin{align}
		\label{W14B3}
		L = \mmF_1 L \supset \mmF_2 L \supset \mmF_3 L \supset \ldots,\\
		\label{W14B4}
		L = \lim\limits_{\leftarrow} L /\mmF_p L,
	\end{align}
	and the degree of filtration must add under $\mmS L_\infty$ brackets.\\
	For the scope of this paper we denote the $\mmS L_\infty$ brackets by curly brackets, sometimes interchangeably writing $d_L (.)$ for the 1-bracket $\{.\}_1$. An $\mmS L_\infty$ algebra is said to be Abelian if $\{ \ldots \}_{n} = 0$ for $n \geq 2$.\\
	Furthermore, we deal with $\infty$-morphisms compatible with the filtrations.\\
	Since an $\infty$-morphism $U: L \rightarrow \tilde{L}$ is uniquely determined by its composition with the projection
	\begin{align}
		\label{W14B5}
		U^\prime \meqd \pi_{\tilde{L}} \circ U : S^{+} (L) \rightarrow \tilde{L},
	\end{align}
	it is sufficient to work with $U^\prime$ exclusively.\\
	In this manner, compatibility means
	\begin{align}
		\label{W14B6}
		U^\prime ( \mmF_{i_1}L \otimes \mmF_{i_2}L \otimes \ldots \otimes \mmF_{i_p}L) \subset \mmF_{i_1 + i_2 + \ldots + i_p} \tilde{L},
	\end{align}
	i.e. the degree of filtration adds in the argument of $U^\prime$.\\
	We say $U$ raises the degree of filtration by $r-1$, if 
	\begin{align}
		\label{Raise}
		U^\prime ( \mmF_{i_1}L \otimes \mmF_{i_2}L \otimes \ldots \otimes \mmF_{i_p}L) \subset \mmF_{i_1 + i_2 + \ldots + i_p+ (r-1)} \tilde{L}
	\end{align}
	holds.\\
	We denote the restriction of $U^\prime$ to $L$ by
	\begin{align}
		\label{W14B7}
		\psi \meqd U^\prime \vert_{L} : L \rightarrow \tilde{L}
	\end{align}
	and due to the multilinearity of $U$ (as $\infty$-morphism) it is reasonable to call this map the linear term of $U$.\\
	Eventually, we define the map
	\begin{align}
		\label{W14B8}
		\left\{
		\begin{aligned}
			U_\star :L^{0} &\rightarrow \tilde{L}^{0}\\[-10pt]
			\alpha &\mapsto U_\star (\alpha) \meqd \sum_{m=1}^{\infty} \frac{1}{m!} U^\prime (\ubr{\alpha, \ldots, \alpha}_{\text{m-times}}).
		\end{aligned}
		\right.
	\end{align}
	On several occasions, we will come across the notions of twisted $\mmS L_\infty$ algebras and twisted morphisms.\\
	Let $L$ be a filtered $\mmS L_\infty$ algebra with the $\mmS L_\infty$ brackets denoted by $\{ \ldots \}$ and let $\tau \in \mMC(L)$ be a Maurer-Cartan element.\\
	Then, there is a filtered $\mmS L_\infty$ algebra $L^\tau$, given by the underlying vector space $L$ and the brackets
	\begin{align}
		\label{W15C1}
		\{ v_1, v_2, \ldots, v_n  \}_n^\tau \meqd \sum_{k=0}^{\infty} \frac{1}{k!} \{ \ubr{\tau, \ldots, \tau}_{\text{k-times}}, v_1, v_2 , \ldots, v_n  \}_{k+n}.
	\end{align} 
	The filtration on $L^\tau$ is the one inherited from $L$ as they both have the same underlying filtered and graded vector space.\\
	In an analogous manner, an $\infty$-morphism ${U:L \rightarrow \tilde{L}}$ compatible with the filtrations induces an $\infty$-morphism 
	\begin{align}
		\label{W15C2}
		U^{\tau} :L^\tau  \rightarrow \tilde{L}^{U_\star (\tau)}
	\end{align}
	of the corresponding twisted $\mmS L_\infty$ algebras which is also compatible with the filtrations.\\
	By the notion of degree and from $d_L$ being a differential, it is clear that $(L, d_L)$ and $(\tilde{L},d_{\tilde{L}})$ can also be considered as filtered complexes. In this sense, ${\psi : L \rightarrow \tilde{L}}$ can be seen as a chain map.\\
	Furthermore,
	\begin{align*}
		\mcone (\psi) \meqd  (L \oplus \tilde{L}[-1], d),
	\end{align*}
	with the differential $d$ given by
	\begin{align*}
		\begin{aligned}
			d : L \oplus \tilde{L} [-1] &\rightarrow L \oplus \tilde{L} [-1]\\
			(\alpha, \beta) &\mapsto d_L (\alpha) + \left( -d_{\tilde{L}} (\beta) + \psi (\alpha) \right) 
		\end{aligned}
	\end{align*}
	defines a chain complex on $L\oplus \tilde{L}[-1]$.\\
	Following the same line as \cite{MasterSpectral}, we consider the spectral sequences on $(L, d_L)$, $(\tilde{L}, d_{\tilde{L}})$ and $(L \oplus \tilde{L}[-1], d)$, whereby from now on we denote the latter by $\mcone(\psi)$.\\
	One can show that $\psi$ being a quasi-isomorphism on the r-1st page of the spectral sequences is equivalent to $E_r (\mcone(\psi) ) = 0$.\\
	When neglecting degrees in our notation (but not neglecting filtration degrees), we find the $n$th page of the spectral sequence of $\mcone (\psi)$ to be
	\begin{align*}
		\begin{aligned}
			&E_n^p (\mcone(\psi))=  \{  u \in \mmF_p (L \oplus \tilde{L} ) \vert d (u) \in \mmF_{p+n} (L \oplus \tilde{L})    \} \\
			&/ \big\{ \mmF_{p+1} (L \oplus \tilde{L}) \oplus \{d (w) \vert 	w \in \mmF_{p-n+1} (L \oplus \tilde{L}) ~\mand~d (w) \in \mmF_{p} (L \oplus \tilde{L}) \} \big\}.
		\end{aligned}
	\end{align*}
	Hence, $E_r (\mcone (\psi))=0$ translates to
	\begin{align*}
		\begin{aligned}
			&\big\{ \{ (a,b) \in \mmF_p (L \oplus \tilde{L}) \vert d(a,b) \in \mmF_{p+r} (L \oplus \tilde{L})  \}\\
			&- \{ d(x,y) \vert (x,y) \in \mmF_{p-r+1} (L \oplus \tilde{L}) ~\mand~d(x,y) \in \mmF_p ( L \oplus \tilde{L})\} \big\} \subset \mmF_{p+1} (L \oplus \tilde{L})~\forall p.
		\end{aligned}
	\end{align*}
	Unravelling the definitions of both $\mcone(\psi)$ and its differential results in
	\begin{align*}
		\text{"$\psi$ is a quasi-isomorphism on the r-1st spectral page"}
	\end{align*}
	being equivalent to the following statement to be valid for all $p$:
	\begin{align}
		\nonu &\text{If}~ a \in \mmF_p L~\mand~b \in \mmF_p \tilde{L}~\text{satisfy}:\\
		\label{W14B14a}
		& \hspace{2cm} \left\{
		\begin{aligned}
			d_L (a ) \in \mmF_{p+r} L\\
			\psi (a) - d_{\tilde{L}} (b) \in \mmF_{p+r} \tilde{L}.
		\end{aligned}
		\right.\\
		\nonu &\text{Then}~ \exists x \in \mmF_{p-r+1} L,~\exists y \in \mmF_{p-r+1} \tilde{L}~\text{such that}:\\
		\label{W14B14}
		& \hspace{2cm} \left\{
		\begin{aligned}
			d_L(x) \in \mmF_p L\\
			\psi(x)- d_{\tilde{L}} (y) \in \mmF_p \tilde{L}\\
			a- d_L (x) \in \mmF_{p+1} L\\
			b - \psi (x) + d_{\tilde{L}} (y) \in \mmF_{p+1} \tilde{L}.
		\end{aligned}	
		\right.
	\end{align}
	We use the convention of co-homology. That is by $H^k$ we denote the $k$th co-homology group and the differentials are supposed to raise the degrees by one.\\
	We continue by introducing the Maurer-Cartan functor
	\begin{align}
		\mmMC : \{\text{complete $\mmS L_\infty$-algebras}\} \rightarrow \mathbf{sSet}.
	\end{align}
	$\mmMC(L)$ denotes the simplicial set originating from \vspace{2pt}
	\begin{align}
		\label{W14B15}
		\mathfrak{MC}_n (L) \meqd \mMC(L \hat{\otimes } \Omega_n),~\text{ with }~	L \hat{\otimes } \Omega_n \meqd \lim_{\substack{\leftarrow\\
				p}}~ (L \otimes \Omega_n) / (\mmF_p L \otimes \Omega_n),
	\end{align}
	with $\Omega_n$ being the Sullivan algebra generated by polynomial De Rham diffential forms on the n-simplex $\Delta^n$.\\ Elements in $\mathfrak{MC}_n (L)$ are named n-cells and 1-cells of the form ${\beta = \beta_0 (t) + dt ~\beta_1}$, i.e. featuring no time dependence in $\beta_1$, are said to be rectified.\\
	An $\infty$-morphism ${U:L \rightarrow \tilde{L}}$ induces a collection of $\infty$-morphisms via
	\begin{align}
		\label{W15G1}
		\left\{
		\begin{aligned}
			U^{(n)}:~L \hat{\otimes } \Omega_n &\rightarrow \tilde{L} \hat{\otimes } \Omega_n\\[-3pt]
			(v_1 \hat{\otimes} \omega_1, v_2 \hat{\otimes} \omega_2, \ldots, v_q \hat{\otimes} \omega_q) &\mapsto \epsilon U(v_1, \ldots, v_q) \hat{\otimes } \omega_1 \omega_2 \ldots \omega_q,
		\end{aligned}
		\right.	
	\end{align}
	where $v_i \in L$, $\omega_i \in \Omega_n$ and $\epsilon$ denotes the Koszul sign. The $\mmS L_\infty$ structures on $L \hat{\otimes } \Omega_n$ and $\tilde{L} \hat{\otimes } \Omega_n$ emerge from the $\mmS L_\infty$ structures on $L$ and $\tilde{L}$ in similar manners. Remarkably enough, the differential on this completion consists of the usual $d_L$ and an additional $d_{\Omega_n}$.\\
	With $U_\star^{(n)}$ defined as in Equation \eqref{W14B8}, we have
	\begin{align}
		\mathfrak{MC}_n (U) \meqd U_\star^{(n)} : \mathfrak{MC}_n (L) \rightarrow \mathfrak{MC}_n (\tilde{L}),
	\end{align}
	which forms a morphism of simplicial sets
	\begin{align}
		\label{W15G2}
		\mmMC(U):\mmMC(L) \rightarrow \mmMC(\tilde{L}).
	\end{align}
	According to Proposition 4.1 from \cite{DolgushevEnhancement}, $\mmMC(L)$ is a Kan complex. Therefore, it is sensible to introduce the concept of homotopy groups.\\
	Let $\tau \in \mMC(L)$ be a Maurer-Cartan element.\\
	The elements of the nth homotopy group $\pi_n (\mmMC(L),\tau)$ are given by the equivalence classes $ [ a ] $, where a representative $a$ is an $n$-cell in $\mmMC(L)$, satisfying $\partial_i a = \tau$ for all $i$, when $\tau$ is considered as a degenerate $(n-1)$-simplex. The notion of equivalence relation is such that $a \sim b$ iff there is a $(n+1)$-simplex $\omega$ fulfilling $\partial \omega \meqd (\partial_0 \omega, \ldots, \partial_{n+1} \omega) = (\tau, \ldots, \tau, a,b)$.\\
	The group structure emerges by defining $[a] \cdot [b] =: [\partial_n s]$, where $s$ is any $(n+1)$-simplex satisfying $\partial s = (\tau, \ldots, \tau, a, -, b)$. This definition makes sense, as from $\mmMC(L)$ being Kan, such $(n+1)$-simplex exists.\\
	From this definition it is clear that $\mMC(L)/\sim$, i.e. the quotient space of Maurer-Cartan elements with gauge-equivalence induced by 1-cells in $\mmMC(L)$, and the connected components $\pi_0 (\mmMC(L))$ are the same.\\
	Eventually, $\mmMC(U): \mmMC(L) \rightarrow \mmMC(\tilde{L})$ being a weak homotopy equivalence means:
	\begin{enumerate}
		\item $\mmMC(U)$ induces an isomorphism of connected components
		\begin{align*}
			\pi_0 (\mmMC(L)) \stackrel{\cong}{\rightarrow} \pi_0 (\mmMC(\tilde{L}))
		\end{align*}
		in $\mathbf{Set}$.
		\item For all Maurer-Cartan elements $\tau\in \mMC(L)$ and for all $n \geq 1$, $\mmMC(U)$ induces an isomorphism
		\begin{align*}
			\pi_n (\mmMC(L), \tau) \stackrel{\cong}{\rightarrow} \pi_n (\mmMC(\tilde{L}), U_\star (\tau))
		\end{align*}
		in $\mathbf{Grp}$.
	\end{enumerate}
	\section{Preparation for Bijection on the Level of $\pi_0$}
	\label{Preparation}
	Before stating the main theorem of this paper, we have to give four technical lemmata. The first one describes an interplay between 1-cells in $\mmMC(L)$ and initial value problems (IVP). The second one allows, under some conditions, for a 1-cell in $\mmMC(L)$ connecting two Maurer-Cartan elements to find another 1-cell in $\mmMC(L)$ connecting the same Maurer-Cartan elements and have its $dt$ term adjusted by an exact term. The third describes how every Maurer-Cartan element is found to be gauge-equivalent to a Maurer-Cartan element having at least a specific degree of filtration, subject to certain pre-conditions. Eventually, the fourth makes a useful statement for degree 0 elements which have differentials of higher filtration degrees.\\
	\begin{lemma}[1-Cell in $\mmMC(L)$ iff solution to IVP]
		\label{Lemma W14B1}\hfill\\	
		Let $L$ be an $\mmS L_\infty$ algebra endowed with a descending, bounded above\footnote{In case of $L$ being Abelian, the Lemma also holds for a filtration of the form ${L= \mmF_0 L \supset \mmF_1 L \supset \mmF_2 L \supset \ldots}$ instead.} and complete filtration
		\begin{align*}
			L= \mmF_1 L \supset \mmF_2 L \supset \mmF_3 L \supset \ldots
		\end{align*}
		compatible with the $\mmS L_\infty$ structure.\\
		Let us assume 
		\begin{align*}
			\beta = \beta_0 (t) + dt~\beta_1(t)
		\end{align*}
		for some $\beta_0 (t), \beta_1 (t)$ in $\mmF_1 L \hat{\otimes } \mhK[t]$.\\
		Let $m_0 \in \mMC(L)$ be a Maurer-Cartan element.\\
		Then $\beta= \beta_0 (t) + dt~\beta_1 (t)$ is a 1-cell in $\mmMC(L)$ with starting point $m_0$	iff $\beta_0 (t)$ is the (unique) solution to the IVP
		\begin{align}
			\label{W14B16}
			\left\{
			\begin{aligned}
				\frac{\partial \beta_0 (t)}{\partial t} &= d_L^{\beta_0 (t)} (\beta_1 (t))\\
				\beta_0 (0) &= m_0.
			\end{aligned}
			\right.
		\end{align}
	\end{lemma}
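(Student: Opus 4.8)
The plan is to unpack ``1-cell of $\mmMC(L)$ with starting point $m_0$'' into the two statements ``$\beta\in\mMC(L\hat{\otimes}\Omega_1)$'' and ``the $t=0$ face of $\beta$ is $m_0$'', to expand the Maurer--Cartan equation $\mcurv(\beta)=0$ along powers of $dt$, and to recognise the two resulting equations as the two lines of the initial value problem \eqref{W14B16}.

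First I would recall that the $\mmS L_\infty$ brackets on $L\hat{\otimes}\Omega_1$ are the $\Omega_1$-multilinear extensions of those of $L$, with the single modification that the unary bracket acquires an extra summand $d_{\Omega_1}$, which on $\beta=\beta_0(t)+dt\,\beta_1(t)$ reads $d_{\Omega_1}(\beta)=dt\,\tfrac{\partial\beta_0(t)}{\partial t}$ (the $dt\,\beta_1$ part being killed since $dt\wedge dt=0$). Substituting $\beta$ into $\mcurv$ and again using $dt\wedge dt=0$, in each $\{\beta,\ldots,\beta\}_k$ only the summands containing the factor $dt\,\beta_1$ \emph{at most once} survive: the ones with no such factor assemble over $k$ into $\mcurv(\beta_0(t))$, while the $k$ summands containing it exactly once are all equal (the Koszul signs from the various positions coinciding) and their combined coefficient $\tfrac{k}{k!}$ telescopes to $\tfrac{1}{(k-1)!}$, so that, after re-indexing and invoking Definition \eqref{W15C1} of the twisted differential, they sum to $dt\,d_L^{\beta_0(t)}(\beta_1(t))$. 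Altogether one obtains
\[
\mcurv(\beta)\;=\;\mcurv(\beta_0(t))\;+\;dt\Big(\frac{\partial\beta_0(t)}{\partial t}-d_L^{\beta_0(t)}(\beta_1(t))\Big).
\]
Since the two summands live in the complementary graded pieces $L^{1}\hat{\otimes}\mhK[t]$ and $(L^{0}\hat{\otimes}\mhK[t])\,dt$ of $(L\hat{\otimes}\Omega_1)^{1}$, the equation $\mcurv(\beta)=0$ is equivalent to the simultaneous vanishing of $\mcurv(\beta_0(t))$ (as a polynomial in $t$) and of the flow equation $\tfrac{\partial\beta_0(t)}{\partial t}=d_L^{\beta_0(t)}(\beta_1(t))$.

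One direction is then immediate: if $\beta$ is a 1-cell with starting point $m_0$, the $dt$-component gives the flow equation and ``starting point $m_0$'' means exactly $\beta_0(0)=m_0$, so $\beta_0(t)$ solves \eqref{W14B16}. For the converse the crux is that the flow equation together with $\beta_0(0)=m_0\in\mMC(L)$ already forces $\mcurv(\beta_0(t))\equiv0$, although a priori it was only demanded to vanish at $t=0$. I would prove this by a Gr\"onwall-type argument: differentiating \eqref{W14B2} and using \eqref{W15C1} gives $\tfrac{\partial}{\partial t}\mcurv(\beta_0(t))=d_L^{\beta_0(t)}\big(\tfrac{\partial\beta_0(t)}{\partial t}\big)$; substituting the flow equation and then the standard identity $(d_L^{\sigma})^2(x)=\pm\{\mcurv(\sigma),x\}_2^{\sigma}$ (valid for any degree-$0$ element $\sigma$, the twisted unary bracket squaring to a curvature term) shows that $C(t)\meqd\mcurv(\beta_0(t))$ solves the linear ODE $\tfrac{\partial C(t)}{\partial t}=\pm\{C(t),\beta_1(t)\}_2^{\beta_0(t)}$ with $C(0)=\mcurv(m_0)=0$. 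Because $\beta_0(t),\beta_1(t)\in\mmF_1 L\hat{\otimes}\mhK[t]$, the operator $\{-,\beta_1(t)\}_2^{\beta_0(t)}$ strictly raises the degree of filtration, so $C(t)\in\mmF_p L\hat{\otimes}\mhK[t]$ implies $\tfrac{\partial C(t)}{\partial t}\in\mmF_{p+1}L\hat{\otimes}\mhK[t]$ and hence, integrating from the point $t=0$ where $C$ vanishes, $C(t)\in\mmF_{p+1}L\hat{\otimes}\mhK[t]$; iterating from $p=1$ and using completeness \eqref{W14B4} gives $C\equiv0$. Thus $\mcurv(\beta)=0$, i.e. $\beta$ is a 1-cell, and its $t=0$ face is $m_0$. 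The parenthetical uniqueness follows by running the same filtration argument on the difference of two solutions of \eqref{W14B16}. (When $L$ is Abelian one has $d_L^{\sigma}=d_L$, so $\tfrac{\partial C}{\partial t}=d_Ld_L\beta_1=0$ and $C\equiv C(0)=0$ without any use of the filtration --- which is exactly why the lemma then tolerates a filtration starting at $\mmF_0$.)

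The bookkeeping parts --- the Koszul-sign tracking in the $dt$-expansion of $\mcurv(\beta)$, the identity $\tfrac{\partial}{\partial t}\mcurv(\beta_0)=d_L^{\beta_0}(\tfrac{\partial\beta_0}{\partial t})$, and the twisted-square identity $(d_L^{\sigma})^2(x)=\pm\{\mcurv(\sigma),x\}_2^{\sigma}$ --- are routine. The genuinely load-bearing step, and the one I expect to need the most care, is the last one: propagating the Maurer--Cartan property of $\beta_0$ from $t=0$ to all $t$ by viewing $\mcurv(\beta_0(t))$ as the solution of a linear ODE and closing the argument with completeness of the filtration.
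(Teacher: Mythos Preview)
Your proposal is correct and follows essentially the same route as the paper: expand $\mcurv(\beta)$ along powers of $dt$, split into $\mcurv(\beta_0(t))=0$ plus the flow equation, and for the converse show that $C(t)=\mcurv(\beta_0(t))$ satisfies $\tfrac{\partial C}{\partial t}=-\{C,\beta_1\}_2^{\beta_0}$ with $C(0)=0$. You are in fact more explicit than the paper on the last step (the paper simply asserts that this ODE together with $C(0)=0$ forces $C\equiv 0$, without spelling out the filtration-raising/completeness argument) and you also address the uniqueness claim and the Abelian footnote, which the paper's proof leaves untouched.
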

	\begin{proof}[\textbf{Proof of Lemma \ref{Lemma W14B1}}]
		\hfill\\
		In very much the same vein as in Section 6.2.3 from \cite{Nicoud} for $\beta$ to be a 1-cell in $\mmMC(L)$, we need $\mcurv (\beta)=0$ to hold.\\
		Exploiting $dt^2=0$ and using the definition of $\mmS L_\infty$ brackets, we rewrite this as
		\begin{align*}
			\begin{aligned}
				\mcurv (\beta) &= (d_L + d_{\Omega_1}) \big(\beta_0 (t)+ dt \beta_1 (t)\big) + \sum_{m=2}^{\infty} \frac{1}{m!} \{ \beta, \ldots, \beta \}_m \\
				&=\ubr{d_L (\beta_0 (t)) + \sum_{m=2}^{\infty} \frac{1}{m!} \{\beta_0 (t), \ldots, \beta_0 (t)\}}_{ = \mcurv (\beta_0 (t)) } - \frac{\partial \beta_0 (t)}{\partial t}  dt\\
				&+d_L (\beta_1 (t)) dt~+ \sum_{m=1}^{\infty} \frac{1}{m!} \{ \beta_0 (t), \ldots, \beta_0 (t), \beta_1 (t)  \}_{m+1} dt~ =0. 
			\end{aligned}
		\end{align*}
		So, using the definition of the twisted differential (cf. Equation \eqref{W15C1}) $\beta$ being a 1-cell in $\mmMC(L)$ is equivalent to
		\begin{align}
			\label{W14B19}
			\frac{\partial \beta_0(t) }{\partial t} = d_L^{\beta_0 (t)} ( \beta_1 (t))
		\end{align}
		and
		\begin{align}
			\label{W14B20}
			\mcurv ( \beta_0 (t)) =0~\text{for all $t$}.
		\end{align}
		The first implication of the statement follows directly from Equation \eqref{W14B19}.\\
		For the second implication, Equation \eqref{W14B19} is obviously satisfied, hence it remains to confirm Equation \eqref{W14B20}.\\
		We have
		\begin{align}
			\label{W14B21}
			\left\{
			\begin{aligned}
				&\frac{\partial (\mcurv (\beta_0 (t)))}{\partial t} = \frac{\partial}{\partial t} \sum_{k=1}^{\infty} \frac{1}{k!} \{ \beta_0 (t) , \ldots, \beta_0 (t)\}_k \\
				&=\sum_{k=0}^{\infty} \frac{1}{k!} \{ \beta_0 (t), \ldots, \beta_0 (t), \frac{\partial}{\partial t} \beta_0 (t) \}_{k+1} = d_L^{\beta_0 (t)} \Big( \frac{\partial \beta_0 (t)}{\partial t} \Big)\\
				&\hspace{-4pt}\meqc{\eqref{W14B16}}d_L^{\beta_0 (t)} \circ d_L^{\beta_0 (t)} (\beta_1 (t)) \meqc{\text{(2.19) from \cite{DolgushevEnhancement}}} - \{\mcurv (\beta_0 (t)), \beta_1 (t)\}_2^{\beta_0 (t)}
			\end{aligned}
			\right.
		\end{align}
		and $\beta_0 (0)=m_0$.\\
		As $m_0 \in \mMC(L)$ yields $\mcurv(m_0) =0$, we find $\mcurv (\beta_0 (0))=0$ and together with Equation \eqref{W14B21} this ensures Equation \eqref{W14B20} to hold.
	\end{proof}
	\begin{lemma}
		\label{Lemma W14B2}\hfill\\
		Let $L$ be an $\mmS L_\infty$ algebra endowed with a descending, bounded above and complete filtration
		\begin{align*}
			L= \mmF_1 L \supset \mmF_2 L \supset \mmF_3 L \supset \ldots
		\end{align*}
		compatible with the $\mmS L_\infty$ structure.\\
		Let 
		\begin{align*}
			\beta = \beta_0 (t) + dt~\beta_1 (t)
		\end{align*}
		be a 1-cell in $\mmMC (L)$ connecting the two Maurer-Cartan elements $m_0, m_1 \in \mmF_q L$.\\
		Let $\beta_1 (t)$ satisfy $\beta_1 (t) \in \mmF_p L \hat{\otimes} \mathbb{K}[t]$.\\
		Let us assume both $y \in \mmF_{p-l} L$ and $\beta_1 (t) + d_L (y) \in \mmF_{p+1} L \hat{\otimes} \mathbb{K}[t]$ to hold.\\
		If
		\begin{align}
			\label{W14B22ab}
			q \geq l+1 ~\mand ~ p \geq l+1,
		\end{align}
		then there exists a rectified 1-cell
		\begin{align*}
			\gamma = \gamma_0 (t) + dt~\gamma_1
		\end{align*}
		in $\mmMC(L)$ connecting $m_0$ and $m_1$ and having $\gamma_1 \in \mmF_{p+1} L$.
	\end{lemma}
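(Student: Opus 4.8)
The plan is to replace the given 1-cell $\beta$ by a twisted-exact modification built from $y$ that pushes its $dt$-component into $\mmF_{p+1} L$ without moving its endpoints, and then to rectify the outcome inside the filtration stratum $\mmF_{p+1} L$.

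First I would note that, being a 1-cell with starting point $m_0$, the path $\beta_0(t)$ consists of Maurer-Cartan elements (Equation~\eqref{W14B20}) and satisfies $\partial_t \beta_0(t) = d_L^{\beta_0(t)}(\beta_1(t))$ (Equation~\eqref{W14B16}). Since $d_L$ preserves the filtration and the higher brackets raise it, the right-hand side lies in $\mmF_p L \hat{\otimes} \mhK[t]$, so integrating from $\beta_0(0) = m_0 \in \mmF_q L$ yields $\beta_0(t) \in \mmF_{\min(p,q)} L \subseteq \mmF_{l+1} L$ for all $t$ --- this is exactly where the hypotheses $p \geq l+1$ and $q \geq l+1$ enter. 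In particular, for every $w \in \mmF_{p-l} L$ each summand of $d_L^{\beta_0(t)}(w) - d_L(w) = \sum_{k \geq 1} \tfrac{1}{k!} \{ \beta_0(t), \ldots, \beta_0(t), w \}_{k+1}$ carries an argument from $\mmF_{l+1} L$ alongside the argument $w \in \mmF_{p-l} L$, whence $d_L^{\beta_0(t)}(w) \equiv d_L(w) \pmod{\mmF_{p+1} L}$.

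Next I would set $\beta_1'(t) \meqd \beta_1(t) + d_L^{\beta_0(t)}(y)$ and $\beta' \meqd \beta_0(t) + dt~\beta_1'(t)$. As $\beta_0(t)$ is Maurer-Cartan we have $d_L^{\beta_0(t)} \circ d_L^{\beta_0(t)} = 0$, so $d_L^{\beta_0(t)}(\beta_1'(t)) = d_L^{\beta_0(t)}(\beta_1(t)) = \partial_t \beta_0(t)$; hence by Lemma~\ref{Lemma W14B1} the object $\beta'$ is again a 1-cell in $\mmMC(L)$, with the same underlying path $\beta_0(t)$ and therefore the same endpoints $m_0$ and $m_1$. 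By the previous paragraph $d_L^{\beta_0(t)}(y) \equiv d_L(y) \pmod{\mmF_{p+1} L}$, so $\beta_1'(t) \equiv \beta_1(t) + d_L(y) \pmod{\mmF_{p+1} L}$, and since $\beta_1(t) + d_L(y) \in \mmF_{p+1} L \hat{\otimes} \mhK[t]$ by hypothesis this gives $\beta_1'(t) \in \mmF_{p+1} L \hat{\otimes} \mhK[t]$. Feeding this back into $\partial_t \beta_0(t) = d_L^{\beta_0(t)}(\beta_1'(t))$ shows $\partial_t \beta_0(t) \in \mmF_{p+1} L \hat{\otimes} \mhK[t]$, so $\beta_0(t) - m_0 \in \mmF_{p+1} L$ for all $t$, and in particular $m_1 - m_0 \in \mmF_{p+1} L$.

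To conclude I would twist by $m_0$: then $\beta'$ becomes a 1-cell of $\mmMC(\mmF_{p+1} L^{m_0})$ from $0$ to $m_1 - m_0$, where $\mmF_{p+1} L^{m_0}$ is $\mmF_{p+1} L$ with the brackets twisted by $m_0$ as in Equation~\eqref{W15C1} and with the filtration $\mmF_{p+1} L \supset \mmF_{p+2} L \supset \ldots$; this is again a descending, bounded above and complete filtered $\mmS L_\infty$ algebra. Thus $0$ and $m_1 - m_0$ are gauge equivalent inside $\mmF_{p+1} L^{m_0}$, and, since gauge-equivalent Maurer-Cartan elements of a complete filtered $\mmS L_\infty$ algebra are joined by a rectified 1-cell --- the gauge group being pro-unipotent, so that the time-ordered composition of gauge transformations encoded by a general 1-cell is realised by a single one; cf.\ the gauge action in \cite{DolgushevLinfty} and \cite{Nicoud} --- one obtains a rectified 1-cell $\gamma_0^{m_0}(t) + dt~\gamma_1$ from $0$ to $m_1 - m_0$ with $\gamma_1 \in \mmF_{p+1} L$. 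Untwisting, i.e.\ putting $\gamma_0(t) \meqd m_0 + \gamma_0^{m_0}(t)$, produces the asserted rectified 1-cell $\gamma = \gamma_0(t) + dt~\gamma_1$ in $\mmMC(L)$ connecting $m_0$ and $m_1$ with $\gamma_1 \in \mmF_{p+1} L$. The crux, and the step needing the most care, is the interplay in the two middle paragraphs: it is the \emph{twisted} correction $d_L^{\beta_0(t)}(y)$, not the naive $d_L(y)$, that keeps the endpoints fixed, while the bound $\beta_0(t) \in \mmF_{l+1} L$ (forced by $p, q \geq l+1$) is what nevertheless lets the $dt$-component descend into $\mmF_{p+1} L$; the final rectification is then a routine appeal to the pro-unipotent gauge group.
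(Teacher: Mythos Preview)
Your proof is correct and follows essentially the same route as the paper: the modified 1-cell $\beta' = \beta_0(t) + dt\,(\beta_1(t) + d_L^{\beta_0(t)}(y))$ is precisely the paper's $\mu \meqd \beta + d_L^{\beta}(y\,dt)$ once one expands using $dt^2=0$, and the filtration estimate via $\beta_0(t)\in\mmF_{\min(p,q)}L$ is identical. The only cosmetic differences are that the paper verifies $\mcurv(\mu)=0$ directly (rather than via Lemma~\ref{Lemma W14B1} and $(d_L^{\beta_0(t)})^2=0$) and then invokes Lemma~B2 of \cite{DolgushevLinfty} for the rectification, whereas you twist by $m_0$ into $\mmF_{p+1}L^{m_0}$ first; both routes are valid and lead to the same conclusion.
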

	\begin{proof}[\textbf{Proof of Lemma \ref{Lemma W14B2}}]
		\hfill\\	
		We set
		\begin{align}
			\label{W14B22}
			\mu \meqd \beta + d_L^{\beta} (y~dt).
		\end{align}
		It is clear that when denoting 
		\begin{align}
			\label{W14B23}
			U \meqd d_L^{\beta} (y~dt),
		\end{align}
		Equation \eqref{W14B22} takes the form
		\begin{align}
			\label{W14B24}
			\mu = \beta + U.
		\end{align}
		For $\mu$ being a 1-cell in $\mmMC(L)$, it must satisfy $\mcurv (\mu) =0$.\\
		A short calculation proves
		\begin{align*}
			\begin{aligned}
				\mcurv (\mu) &\meqc{\eqref{W14B24}} \mcurv(\beta + U)  \\
				&\hspace{-17pt}\meqc{\text{(2.20) from \cite{DolgushevEnhancement}}}\ubr{\mcurv (\beta)}_{\meqc{\beta \in \mmMC(L)} \hspace{-12pt} 0} + d_L^{\beta} (U) + \ubr{\sum_{m=2}^{\infty} \frac{1}{m!} \{ U, \ldots, U \}_m^{\beta}}_{= 0} \\
				&\hspace{4pt}=d_L^{\beta} (U) \meqc{\eqref{W14B23}} 	d_L^{\beta} (d_L^{\beta} (y~dt)) \meqc{\text{(2.19) from \cite{DolgushevLinfty}}} 	- \{ \ubr{\mcurv (\beta)}_{\meqc{\beta \in \mmMC(L)} 0} , y~dt  \}_2^{\beta} =0,
			\end{aligned}
		\end{align*}
		where we used the fact that every summand consists of at least 2 $U$ terms, but $U$ involves $dt$ and thus ($dt^2 = 0$) the sum vanishes.\\
		So we assured ourselves that $\mu$ is a 1-cell in $\mmMC(L)$, indeed.\\
		Unravelling the definition of the twisted differential and once more using the fact that $dt^2=0$, we can rewrite Equation \eqref{W14B22} as
		\begin{align}
			\label{W14B26}
			\mu = \beta + \sum_{k=0}^{\infty} \frac{1}{k!} \{  \beta_0 (t), \ldots , \beta_0 (t), y \}_{k+1} ~dt.
		\end{align}
		It is obvious, that $\mu \vert_{t=0} = \beta_0 (0)=m_0$ and $\mu \vert_{t=1} =\beta_0 (1) =m_1$, thus $\mu$ also connects $m_0$ with $m_1$.\\
		By means of Lemma \ref{Lemma W14B1}, $\beta$ being a 1-cell in $\mmMC(L)$ with starting point $m_0$ implies $\beta_0(t)$ to solve the IVP
		\begin{align*}
			\begin{aligned}
				\frac{\partial \beta_0 (t)}{\partial t} &= d_L^{\beta_0 (t)} (\beta_1 (t))\\
				\beta_0 (0) &= m_0.
			\end{aligned}
		\end{align*}
		Integrating and using the assumptions about the degrees of filtration yields
		\begin{align*}
			\beta_0 (t) = \ubr{m_0}_{\in \mmF_q L} + \int_{0}^{t} dt_1~\ubr{\sum_{k=0}^{\infty} \frac{1}{k!} \{ \beta_0(t_1), \ldots, \beta_0 (t_1), \beta_1 (t_1)  \}_{k+1}}_{\in \mmF_p L},
		\end{align*}
		i.e.
		\begin{align}
			\label{W14B29}
			\beta_0 (t) \in \mmF_{\text{min}(q,p)} L \hat{\otimes } \mhK[t].
		\end{align}
		Explicitly expanding Equation \eqref{W14B26} and grouping terms leads to
		\begin{align}
			\label{W14B30}
			\mu = \beta_0 (t) + \bigg( \ubr{\beta_1 (t) + d_L (y) }_{\in \mmF_{p+1} L \hat{\otimes } \mhK[t]} + \ubr{ \{ \beta_0 (t) , y \}_2 }_{\in \mmF_s L \hat{\otimes } \mhK[t]} + \mcO (\mmF_{s+1 } L \hat{\otimes } \mhK[t]) \bigg) dt
		\end{align}
		for some to be determined $s$.\\
		If we can show that $s \geq p+1$, then we are done.\\
		But we have
		\begin{align*}
			\{ \beta_0 (t), y \}_2 \minc{\eqref{W14B29}} \mmF_{(p-l)+ (\mmin(q,p))} L \hat{\otimes } \mhK[t] \underset{\eqref{W14B22ab}}{\subset} \mmF_{p+1} L \hat{\otimes } \mhK[t],
		\end{align*}
		hence $ \mu = \mu_0 (t) + dt~\mu_1 (t)$ is a 1-cell in $\mmMC(L)$ connecting $m_0$ with $m_1$ and having $\mu_1 (t) \in \mmF_{p+1} L \hat{\otimes } \mhK[t]$.\\
		By means of Lemma B2 from \cite{DolgushevLinfty}, there exists a rectified 1-cell in $\mmMC (L)$, which connects $m_0$ and $m_1$ and also has the coefficient of its $dt$ part in $\mmF_{p+1} L$.\\
	\end{proof}	
	\begin{lemma}
		\label{Lemma W19A1}\hfill\\
		Let $L$ be an $\mmS L_\infty$ algebra endowed with a descending, bounded above and complete filtration
		\begin{align*}
			L= \mmF_1 L \supset \mmF_2 L \supset \mmF_3 L \supset \ldots
		\end{align*}
		compatible with the $\mmS L_\infty$ structure.\\
		Let us assume $H^0 ( ( \mmF_{2^q} L)/(\mmF_{\mmin(r,2^{q+1})} L))=0$ for every q with $2^q < r$.\\
		Then for every Maurer-Cartan element $a \in \mMC(L)$ there exists a Maurer-Cartan element $a_r \in \mMC(L)$ which is gauge equivalent to the initial one $a_r \sim a$ for the gauge equivalence induced by 1-cells in $\mmMC(L)$ and satisfies $a_r \in \mmF_r L$.\\
		The same statement also holds with $H^i ((\mmF_1 L)/(\mmF_q L))=0$ for $i=-1,0$ and $q$ every power of 2 smaller than $r$ and $q=r$, instead.\footnote{In the first version, the proof was given for the latter case only.}
	\end{lemma}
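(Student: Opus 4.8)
The plan is to move $a$ to the bottom of the filtration by a finite chain of $1$-cells, each one roughly doubling the filtration degree; if $r=1$ there is nothing to prove, so assume $r\geq 2$.

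\emph{The doubling step.} I claim that if $b\in\mMC(L)\cap\mmF_{2^q}L$ with $2^q<r$, and $n\meqd\mmin(2^{q+1},r)$, then there is a $1$-cell in $\mmMC(L)$ joining $b$ to some $b'\in\mMC(L)\cap\mmF_n L$. In the Maurer-Cartan equation $\mcurv(b)=d_L(b)+\sum_{k\geq 2}\tfrac{1}{k!}\{b,\dots,b\}_k=0$ every bracket with $k\geq 2$ arguments lies in $\mmF_{2^{q+1}}L\subseteq\mmF_n L$, since filtration degrees add; hence $d_L(b)\in\mmF_n L$, so $b$ represents a degree-$0$ cocycle of the quotient complex $(\mmF_{2^q}L)/(\mmF_n L)$. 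By the hypothesis $H^0((\mmF_{2^q}L)/(\mmF_{\mmin(r,2^{q+1})}L))=0$ this class is a coboundary: there is $\xi\in(\mmF_{2^q}L)^{-1}$ with $b-d_L(\xi)\in\mmF_n L$.

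\emph{Running the gauge flow.} Next I would take the initial value problem $\partial_t\beta_0(t)=d_L^{\beta_0(t)}(-\xi)$, $\beta_0(0)=b$; by completeness it has a unique solution, and by Lemma \ref{Lemma W14B1} the rectified $1$-cell $\beta\meqd\beta_0(t)+dt\,(-\xi)$ lies in $\mmMC(L)$ with starting point $b$, so its endpoint $b'\meqd\beta_0(1)\in\mMC(L)$ is gauge equivalent to $b$. A filtration bootstrap gives $\beta_0(t)\in\mmF_{2^q}L\hat{\otimes}\mhK[t]$, because the right-hand side $d_L^{\beta_0(t)}(-\xi)=-d_L(\xi)-\sum_{k\geq 1}\tfrac{1}{k!}\{\beta_0(t),\dots,\beta_0(t),\xi\}_{k+1}$ stays in $\mmF_{2^q}L\hat{\otimes}\mhK[t]$ whenever $\beta_0(t)$ does. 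Integrating, $b'=b-d_L(\xi)-\int_0^1\sum_{k\geq 1}\tfrac{1}{k!}\{\beta_0(s),\dots,\beta_0(s),\xi\}_{k+1}\,ds$; here $b-d_L(\xi)\in\mmF_n L$ and every bracket in the integrand has at least $k+1\geq 2$ arguments in $\mmF_{2^q}L$, hence filtration degree $\geq 2^{q+1}\geq n$, so $b'\in\mmF_n L$. This proves the step.

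\emph{Iteration and the variant hypotheses.} Starting from $a\in\mMC(L)\cap\mmF_1 L=\mMC(L)\cap\mmF_{2^0}L$ and applying the doubling step for $q=0,1,\dots$ until the first $q$ with $2^{q+1}\geq r$, one gets a chain of $1$-cells from $a$ to some $a_r\in\mMC(L)\cap\mmF_r L$; since being connected by a chain of $1$-cells is transitive on $\pi_0(\mmMC(L))$ (it coincides with gauge equivalence, as $\mmMC(L)$ is Kan), $a_r$ is gauge equivalent to $a$. For the alternative hypotheses it is enough to recover $H^0((\mmF_{2^q}L)/(\mmF_{\mmin(r,2^{q+1})}L))=0$ for $2^q<r$ from the stated vanishings; this comes from the long exact cohomology sequence of $0\to(\mmF_a L)/(\mmF_b L)\to(\mmF_1 L)/(\mmF_b L)\to(\mmF_1 L)/(\mmF_a L)\to 0$ with $a=2^q$, $b=\mmin(r,2^{q+1})$, whose segment $H^{-1}((\mmF_1 L)/(\mmF_{2^q}L))\to H^0((\mmF_{2^q}L)/(\mmF_b L))\to H^0((\mmF_1 L)/(\mmF_b L))$ has both outer terms zero ($2^q$ is either a power of $2$ below $r$ or equals $1$, where the complex is trivial, and $b$ is either $r$ or a power of $2$ at most $r$). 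The main obstacle is the doubling step: one must check that the gauge flow generated by the primitive $\xi$ — which naively gains only one unit of filtration — actually drives $b$ into $\mmF_{2^{q+1}}\cap\mmF_r$, and this hinges on the Maurer-Cartan equation forcing $d_L b\in\mmF_{2^{q+1}}L$ together with the bracket corrections being quadratic in filtration degree.
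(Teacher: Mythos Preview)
Your proof is correct and, for the main hypothesis, follows the same line as the paper: use the Maurer--Cartan equation to see that $d_L(b)\in\mmF_{2^{q+1}}L$, invoke the cohomology vanishing to find a primitive $\xi\in\mmF_{2^q}L$ with $b-d_L(\xi)\in\mmF_{\min(2^{q+1},r)}L$, run the rectified $1$-cell with $dt$-part $-\xi$ via Lemma~\ref{Lemma W14B1}, and check that the endpoint lands in the deeper filtration step because the higher brackets contribute at least $\mmF_{2^{q+1}}L$. Your packaging of the two sub-cases ($2^{q+1}<r$ and $2^{q+1}\geq r$) into a single step with $n=\min(2^{q+1},r)$ is a mild streamlining of the paper's argument, which treats them separately.

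Where you genuinely diverge is in the handling of the alternative hypothesis $H^i((\mmF_1 L)/(\mmF_q L))=0$. The paper does not reduce to the first case; instead it reruns the construction with a weaker primitive $q_{u+1}\in\mmF_1 L$ obtained from $H^0((\mmF_1 L)/(\mmF_{2^{u+1}}L))=0$, and then uses $H^{-1}((\mmF_1 L)/(\mmF_{2^u}L))=0$ to correct $q_{u+1}$ by an exact term $d_L(r_{u+1})$ so that $q_{u+1}-d_L(r_{u+1})\in\mmF_{2^u}L$, after which the same flow argument goes through. Your approach is cleaner: you observe that the long exact sequence of $0\to(\mmF_{2^q}L)/(\mmF_b L)\to(\mmF_1 L)/(\mmF_b L)\to(\mmF_1 L)/(\mmF_{2^q}L)\to 0$ with $b=\min(r,2^{q+1})$ sandwiches $H^0((\mmF_{2^q}L)/(\mmF_b L))$ between two groups that the alternative hypothesis kills, so the first hypothesis is recovered outright and no modification of the construction is needed. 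Both routes are valid; yours separates the homological input from the deformation-theoretic flow, while the paper's keeps everything at the level of explicit primitives.
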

	\begin{proof}[\textbf{Proof of Lemma \ref{Lemma W19A1}}]
		\hfill\\
		\underline{From $a_u \in \mmF_{2^u} L$ to $a_{u+1} \in \mmF_{2^{u+1}} L$ for $2^{u+1} <r$:}\hfill\\
		Because of the Maurer-Cartan equation, it is clear that $a_u \in \mmF_{2^u} L$ and $a_u \in \mMC(L)$ lead to
		\begin{align*}
			d_L (a_u) = \ubr{- \sum_{m=2}^{\infty} \frac{1}{m!} \{ a_u, \ldots , a_u \}_m }_{\in \mmF_{2 \cdot 2^u} L = \mmF_{2^{u+1}} L} \in \mmF_{2^{u+1}} L .
		\end{align*}
		Due to the assumption, $H^0((\mmF_{2^u} L) /( \mmF_{2^{u+1}} L ))=0$ holds and we find
		\begin{align}
			\label{W14Z1}
			\left\{
			\begin{aligned}
				\exists q_{u+1} &\in \mmF_{2^u} L~\mst\\
				a_u- d_{L} (q_{u+1}) &\in \mmF_{2^{u+1}} L.
			\end{aligned}
			\right.
		\end{align}
		We construct a rectified 1-cell $\sigma^{(u+1)} = \sigma_0^{(u+1)} (t) + dt~\sigma_1^{(u+1)}$ in $\mmMC(L)$ by setting its starting point to $a_u$ and demanding
		\begin{align}
			\label{W14Z2}
			\sigma_1^{(u+1)} \meqd -q_{u+1}.
		\end{align}
		By means of Lemma \ref{Lemma W14B1}, this describes a 1-cell in $\mmMC(L)$ if $\sigma_0^{(u+1)}(t)$ is said to be the solution to the IVP
		\begin{align}
			\label{W14Z3}
			\left\{
			\begin{aligned}
				\frac{\partial \sigma_0^{(u+1)} (t)}{\partial t} &= d_L^{\sigma_0^{(u+1)} (t)} ( \sigma_1^{(u+1)})\\
				\sigma_0^{(u+1)} (0) &= a_u.
			\end{aligned}
			\right.
		\end{align}
		We set $a_{u+1}$ to be the endpoint of this so-constructed 1-cell, i.e.
		\begin{align}
			\label{W14Z4}
			a_{u+1} \meqd \sigma^{(u+1)} \vert_{t=1} = \sigma_0^{(u+1)} (1).
		\end{align}
		Exploiting Equation \eqref{W14Z1} and integrating Equation \eqref{W14Z3} yields
		\begin{align}
			\label{W14Z4b}
			\begin{aligned}
				&\sigma_0^{(u+1)} (t) = \ubr{a_{u}}_{\in \mmF_{2^u} L} + \int_{0}^{t} dt_1 \Big( \ubr{d_{L} ( - q_{u+1} )}_{\minc{\eqref{W14Z1}}\mmF_{2^u} L}\\[-6pt]
				&+\sum_{m=1}^{\infty} \frac{1}{m!} \{ \sigma_0^{(u+1)}(t_1), \ldots, \sigma_0^{(u+1)} (t_1), \ubr{- q_{u+1} }_{\minc{\eqref{W14Z1}} \mmF_{2^u} L}   \}_{m+1} \Big) \in \mmF_{2^u} L \hat{\otimes } \mhK[t],
			\end{aligned}
		\end{align}
		i.e.
		\begin{align}
			\label{W14Z4c}
			\sigma_0^{(u+1)} ( t) \in \mmF_{2^u} L \hat{\otimes } \mhK[t].
		\end{align}
		This, in turn, can be plugged into the integral up to $t=1$, leading to
		\begin{align}
			\left\{
			\begin{aligned}	
				a_{u+1} &\meqd \sigma_0^{(u+1)} (1) = a_u + \int_{0}^{1} dt \Big( d_L (-q_{u+1})\\
				&+\ubr{\sum_{m=1}^\infty \frac{1}{m!} \{ \ubr{\sigma_0^{(u+1)} (t)}_{\minc{\eqref{W14Z4c}} \mmF_{2^u} L \hat{\otimes } \mhK[t]}, \ldots , \sigma_0^{(u+1)} (t), \ubr{- q_{u+1} }_{\minc{\eqref{W14Z1}} \mmF_{2^u} L}  \}_{m+1}}_{\in \mmF_{2^{u+1}} L \hat{\otimes } \mhK[t]} \Big)\\
				&=\ubr{a_u - d_L (q_{u+1})}_{\minc{\eqref{W14Z1}} \mmF_{2^{u+1} L}} + \mcO ( \mmF_{2^{u+1}} L) \in \mmF_{2^{u+1}} L.
			\end{aligned}	
			\right.	
		\end{align}
		We continue with this procedure until we eventually arrive at the situation in which we find a Maurer-Cartan element $a_s \sim \ldots \sim a$ which has $a_s \in \mmF_{2^s} L$ for some $s$ with $2^s \leq r < 2^{s+1}$.\newpage
		\noindent\underline{From $a_s \in \mmF_{2^s} L$ to $a_r \in \mmF_r L$ for $2^s < r < 2^{s+1}$:}\hfill\\
		With some minor adjustments, the same steps can be used to construct an $a_r \in \mmF_r L$ having $a_r \sim a_s$ from such an $a_s$.\\
		Because $a_{s} \in \mmF_{2^s} L$ is a Maurer-Cartan element,
		\begin{align*}
			d_{L} (a_{s}) = \ubr{- \sum_{m=2}^{\infty} \frac{1}{m!} \{ a_{s}, \ldots , a_{s}  \}_m }_{\in \mmF_{2 \cdot 2^s} L} \in \mmF_{2^{s+1}} L \underset{2^{s+1} > r}{\subset} \mmF_r L
		\end{align*}
		holds.\\
		Furthermore, $H^0((\mmF_{2^s} L) /(\mmF_r L)) =0$ then implies
		\begin{align}
			\label{W14Z6}
			\left\{
			\begin{aligned}
				\exists q_{s+1} &\in \mmF_{2^s} L~\mst\\
				a_{s} - d_{L} ( q_{s+1}) &\in \mmF_r L.
			\end{aligned}
			\right.	
		\end{align}
		Constructing a rectified 1-cell $\sigma^{(s+1)}= \sigma_0^{(s+1)}(t) +dt~ \sigma_1^{(s+1)}$ in $\mmMC(L)$ in much the same way as before by setting its starting point to $a_{s}$ and demanding
		\begin{align}
			\label{W14Z7}
			\sigma_1^{(s+1)} \meqd - q_{s+1},
		\end{align}
		leads to (cf. Equation \eqref{W14Z4c})
		\begin{align}
			\label{W14Z8}
			\sigma_0^{(s+1)} (t) \in \mmF_{2^s} L \hat{\otimes } \mhK[t].
		\end{align}
		Moreover, we set $a_{r}$ to be the endpoint of this 1-cell, i.e.
		\begin{align}
			\label{W14Z9}
			a_{r} \meqd \sigma^{(s+1)} \vert_{t=1} = \sigma_0^{(s+1)} (1),
		\end{align}
		and find its degree of filtration to be
		\begin{align}
			\label{W14Z10}
			\left\{
			\begin{aligned}
				a_{r} &\meqd \sigma_0^{(s+1)} (1) = a_{s} + \int_{0}^{1} dt \Big( \hspace{-3pt}-d_{L} ( q_{s+1} )\\
				&+\ubr{\sum_{m=1}^{\infty} \frac{1}{m!} \{ \ubr{\sigma_0^{(s+1)} (t)}_{\minc{\eqref{W14Z8}} \mmF_{2^s} L \hat{\otimes } \mhK[t]} , \ldots, \sigma_0^{(s+1)} (t) , \ubr{- q_{s+1}}_{\minc{\eqref{W14Z6}} \mmF_{2^s} L}  \}_{m+1} }_{\in \mmF_{2^{s+1}} L \hat{\otimes } \mhK[t]} \Big) \\
				&=\ubr{a_{s} - d_{L} (q_{s+1})}_{\minc{\eqref{W14Z6}} \mmF_r L} + \mcO ( \mmF_{2^{s+1}} L) \minc{2^{s+1}  > r} \mmF_r L.
			\end{aligned}
			\right.	
		\end{align}
		This proves the first statement.\\
		In case of $H^i ((\mmF_1 L)/(\mmF_q L))=0$ we need to slightly adjust our previous approach. Instead of Equation \eqref{W14Z1}, $H^0 ( (\mmF_1 L) / ( \mmF_{2^{u+1}} L))=0$ only gives
		\begin{align}
			\label{ZZ1}
			\left\{
			\begin{aligned}
				\exists q_{u+1} &\in \mmF_1 L~\mst\\
				a_u - d_L (q_{u+1}) &\in \mmF_{2^{u+1}} L.
			\end{aligned}
			\right.
		\end{align}
		However, due to $a_u \in \mmF_{2^u} L$,
		\begin{align}
			d_L (q_{u+1}) \in \mmF_{2^u} L
		\end{align}
		holds.\\
		Together with $H^{-1} ((\mmF_1 L)/(\mmF_{2^u} L))=0$ this implies
		\begin{align}
			\left\{
			\begin{aligned}
				\exists r_{u+1} &\in \mmF_1 L ~\mst\\
				q_{u+1} - d_L (r_{u+1}) &\in \mmF_{2^u} L.
			\end{aligned}
			\right.
		\end{align}
		From then on the rest of the proof remains the same, with the sole difference of using
		\begin{align*}
			\sigma_1^{(u+1)} \meqd - (q_{u+1} - d_L (r_{u+1}))
		\end{align*}
		instead.\\
		The same adjustments are also required when going from $a_s \in \mmF_{2^s} L$ to $a_r \in \mmF_r L$ for $2^s < r < 2^{s+1}$.
	\end{proof}
	\begin{lemma}
		\label{Lemma W14Z1}\hfill\\
		Let $L$ be an $\mmS L_\infty$ algebra endowed with a descending, bounded above and complete filtration
		\begin{align*}
			L= \mmF_1 L \supset \mmF_2 L \supset \mmF_3 L \supset \ldots
		\end{align*}
		compatible with the $\mmS L_\infty$ structure.\\
		Let us assume $H^0 ( ( \mmF_{2^q} L)/(\mmF_{\mmin(r,2^{q+1})} L))=0$ for every q with $2^q < r$.\\
		If $x\in \mmF_1 L^0$ and $d_L (x) \in \mmF_{2^k} L$ for some $2^k < r$, then
		\begin{align}
			\label{ZC1}
			\begin{aligned}
				\exists y &\in \mmF_1 L~\mst\\
				x-d_L (y) &\in \mmF_{2^k} L.
			\end{aligned}
		\end{align}
		Analogously, $x\in \mmF_1 L^0$ with $d_L (x) \in \mmF_{r} L$ results in
		\begin{align}
			\label{ZC2}
			\begin{aligned}
				\exists y &\in \mmF_1 L~\mst\\
				x-d_L (y) &\in \mmF_r L.
			\end{aligned}
		\end{align}
	\end{lemma}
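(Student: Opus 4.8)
The plan is to run a doubling induction along the filtration, in the same spirit as the proof of Lemma \ref{Lemma W19A1}, but with one crucial simplification: here we never touch the $\mmS L_\infty$ brackets. Since $x$ is only ever modified by an exact term, the differential of the modified element stays equal to $d_L(x)$, so no higher-bracket contributions appear and the whole argument is purely homological.

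I would treat both displayed conclusions \eqref{ZC1} and \eqref{ZC2} simultaneously, writing $m$ for $2^k$ in the first case and for $r$ in the second, and proving: there is a finite sequence $x = x_0, x_1, x_2, \dots$ of degree-$0$ elements with $x_j \in \mmF_{\mmin(r,2^j)} L$ and $x_j - x$ exact, where $x_j$ is obtained from $x_{j-1}$ by subtracting an exact term $d_L(z_{j-1})$ with $z_{j-1} \in \mmF_{2^{j-1}} L$ (so $x_0 \meqd x \in \mmF_1 L = \mmF_{\mmin(r,2^0)} L$ is the start). The inductive step should fall right out of the hypothesis: at stage $j$ with $2^j < r$ one has $d_L(x_j) = d_L(x) \in \mmF_m L \subseteq \mmF_{\mmin(r,2^{j+1})} L$, so $x_j$ represents a class in $H^0\!\big((\mmF_{2^j}L)/(\mmF_{\mmin(r,2^{j+1})}L)\big)$, and the assumed vanishing of this $H^0$ produces $z_j \in \mmF_{2^j} L$ with $x_{j+1} \meqd x_j - d_L(z_j) \in \mmF_{\mmin(r,2^{j+1})} L$. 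Iterating, the filtration index at least doubles each step until it reaches $r$, so after finitely many steps $x_j$ lands in $\mmF_m L$; the desired $y$ is then the finite sum of the $z_j$'s, which lies in $\mmF_1 L$, and $x - d_L(y) = x_j \in \mmF_m L$.

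The only thing demanding care — the closest this comes to a genuine obstacle — is the bookkeeping around the $\mmin(r, 2^{j+1})$ in the hypothesis: one must check that every stage invoked is indexed by a power of $2$ strictly below $r$ (so the $H^0$-hypothesis is actually available), that the inclusion $\mmF_m L \subseteq \mmF_{\mmin(r,2^{j+1})} L$ really holds at each stage (clear when $m = r$; when $m = 2^k$ it uses $2^{j+1} \le 2^k$, valid as long as we have not yet stopped), and that the final stage — where $2^{j+1}$ first exceeds $r$ — delivers $\mmF_r L$ rather than $\mmF_{2^{j+1}} L$. For \eqref{ZC2} this last stage is exactly the $H^0\!\big((\mmF_{2^s} L)/(\mmF_r L)\big) = 0$ instance of the hypothesis, with $2^s \le r < 2^{s+1}$, since there $\mmin(r, 2^{s+1}) = r$.

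I would also keep a light eye on degrees: each $z_j$ is of cohomological degree $-1$, so the class of $x_j$ being a coboundary in the quotient complex is precisely the statement that $x_j - d_L(z_j)$ lies in the subcomplex; consequently $y$ can in fact be taken in $\mmF_1 L^{-1}$. Beyond these indexing checks no subtlety arises, and in particular there is nothing like the $\mmS L_\infty$ correction terms that had to be absorbed in Lemma \ref{Lemma W19A1}.
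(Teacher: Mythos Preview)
Your proposal is correct and follows essentially the same doubling-induction argument as the paper's proof: start at $\mmF_1 L$, use the vanishing $H^0((\mmF_{2^j}L)/(\mmF_{\mmin(r,2^{j+1})}L))$ to push the modified element into the next filtration level, observe that $d_L$ of the modified element equals $d_L(x)$ by $d_L^2=0$, and iterate until the target $\mmF_{2^k}L$ (resp.\ $\mmF_r L$) is reached, with $y$ the finite sum of the correction terms. Your unified treatment of the two conclusions via $m \in \{2^k, r\}$ and your explicit check of the $\mmin(r,2^{j+1})$ bookkeeping at the final stage are slightly more detailed than the paper, which handles the first case explicitly and dispatches the second with ``a completely analogous manner,'' but the substance is identical.
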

	\begin{proof}[Proof of Lemma \ref{Lemma W14Z1}]
		\hfill\\
		Because of $x \in \mmF_1 L^0$, $d_L (x) \in \mmF_{2^k} L$ and $H^0 ( (\mmF_{2^0} L)/(\mmF_{2^1} L))=0$, there exists a $\tilde{y}_1 \in \mmF_1 L$ such that
		\begin{align*}
			x-d_L (\tilde{y}_1) \in \mmF_{2^1} L.
		\end{align*}  
		We set 
		\begin{align*}
			\tilde{x}_1 \meqd x- d_L (\tilde{y}_1),
		\end{align*}
		which clearly has $\tilde{x}_1 \in \mmF_{2^1} L^0$.\\
		But due to $d^2=0$,
		\begin{align*}
			d_L (\tilde{x}_1) =d_L (x) \in \mmF_{2^k} L
		\end{align*}
		still holds and so we may use $H^0 ( ( \mmF_{2^1} L)/(\mmF_{2^2} L))=0$ (now we arrive at the next bigger power of 2), resulting in the existence of $\tilde{y}_2 \in \mmF_2 L$ for which
		\begin{align*}
			\tilde{x}_1 - d_L ( \tilde{y}_2) \in \mmF_{2^2} L.
		\end{align*}
		We set
		\begin{align*}
			\tilde{x}_2 \meqd \tilde{x}_1 - d_L ( \tilde{y}_2) = x- d_L ( \tilde{y}_1) - d_L ( \tilde{y}_2),
		\end{align*}
		which can be said to have $\tilde{x}_2 \in \mmF_{2^2} L^0$.\\
		Continuing with this procedure eventually leads to
		\begin{align*}
			x_k \meqd \tilde{x}_{k-1} - d_L (\tilde{y}_k) = x- d_L (\tilde{y}_k + \tilde{y}_{k-1} + \ldots + \tilde{y} ),
		\end{align*}
		satisfying $x_k \in \mmF_{2^k} L$.\\
		The second statement follows in a completely analogous manner.
	\end{proof}
	\section{Bijection on the Level of $\pi_0$}
	\label{Bijection}
	\begin{theorem}[Bijection of the connected components]
		\label{Thm W14B1}\hfill\\
		Let $L$ and $\tilde{L}$ be two $\mmS L_\infty$ algebras equipped with descending, bounded above and complete filtrations
		\begin{align*}
			\begin{aligned}
				L= \mmF_1 L \supset \mmF_2 L \supset \mmF_3 L \supset \ldots\\
				\tilde{L} = \mmF_1 \tilde{L} \supset \mmF_2 \tilde{L} \supset \mmF_3 \tilde{L} \supset \ldots
			\end{aligned}
		\end{align*}
		compatible with the $\mmS L_\infty$ algebra structures.\\
		Let $U:L \rightarrow \tilde{L}$ be an $\infty$-morphism of $\mmS L_\infty$ algebras compatible with the filtrations.\\
		Let $\psi$ be the linear term of $U$ as in Equation \eqref{W14B7} and let $U_\star$ be defined as in Equation \eqref{W14B8}.\\
		Let $\psi$ be a quasi-isomorphism on the r-1st page of the spectral sequences of the filtered complexes $(L,d_L)$ and $(\tilde{L},d_{\tilde{L}})$.\\
		Let us assume $H^0 ((\mmF_{2^q} L)/(\mmF_{\mmin(2^{q+1},r)} L))=0$ for every $q$ with $2^q < r$ and\\
		$H^i((\mmF_1 \tilde{L}) / (\mmF_q \tilde{L}))=0$ for $i=-1,0$ and $q$ every power of 2 smaller than $r$ and $q=r$.\\
		Then
		\begin{align}
			\label{W14B34}
			U_\star :\mMC(L) / \sim \stackrel{\cong}{\rightarrow} \mMC(\tilde{L}) /\sim
		\end{align}
		is a bijection, where $\sim$ denotes gauge equivalence induced by 1-cells in $\mmMC(L)$ and $\mmMC(\tilde{L})$, respectively.
	\end{theorem}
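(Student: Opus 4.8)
The plan is to prove surjectivity and injectivity of $U_\star$ on $\mMC(L)/\!\sim$ and $\mMC(\tilde{L})/\!\sim$ separately, both via an induction on filtration degree fed by the unravelled $E_r$-vanishing \eqref{W14B14a}$\Rightarrow$\eqref{W14B14}. The first move in either half is the reduction supplied by Lemma \ref{Lemma W19A1}: the hypothesis $H^0((\mmF_{2^q}L)/(\mmF_{\mmin(2^{q+1},r)}L))=0$ is its $L$-instance and the hypotheses $H^i((\mmF_1\tilde{L})/(\mmF_q\tilde{L}))=0$, $i=-1,0$, its $\tilde{L}$-instance, so I may assume every class in $\mMC(L)/\!\sim$ (resp.\ $\mMC(\tilde{L})/\!\sim$) represented in $\mmF_r L$ (resp.\ $\mmF_r\tilde{L}$). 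The point of the threshold $r$ is that a Maurer--Cartan element $m\in\mmF_p L$ with $p\ge r$ satisfies $d_L(m)=-\sum_{k\ge2}\tfrac1{k!}\{m,\dots,m\}_k\in\mmF_{2p}L\subset\mmF_{p+r}L$, so the cone cochains built below will satisfy \eqref{W14B14a}; and the corrections delivered by \eqref{W14B14} --- which only lie in $\mmF_{p-r+1}$ --- can be dragged back up into $\mmF_r$ when needed, using Lemma \ref{Lemma W14Z1} for the degree-$0$ corrections and, for the degree-$(-1)$ (gauge-parameter) corrections on the $\tilde{L}$-side, a direct appeal to $H^{-1}((\mmF_1\tilde{L})/(\mmF_r\tilde{L}))=0$.

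\textbf{Surjectivity.} Given $\tilde{m}\in\mMC(\tilde{L})$, assumed in $\mmF_r\tilde{L}$, I would construct by induction on $p\ge r$: a degree-$0$ element $m_{(p)}\in\mmF_r L$ with $\mcurv(m_{(p)})\in\mmF_p L$, and $\tilde{m}_{(p)}\in\mMC(\tilde{L})\cap\mmF_r\tilde{L}$ gauge equivalent to $\tilde{m}$, such that $U_\star(m_{(p)})-\tilde{m}_{(p)}\in\mmF_p\tilde{L}$ and $m_{(p+1)}-m_{(p)}$, $\tilde{m}_{(p+1)}-\tilde{m}_{(p)}$ have filtration degree tending to $\infty$; the base case is $m_{(r)}=0$, $\tilde{m}_{(r)}=\tilde{m}$. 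For the step I feed the single cone cochain $(\mcurv(m_{(p)}),\,U_\star(m_{(p)})-\tilde{m}_{(p)})$ --- which, by the curvature estimate above and the identity $\mcurv(U_\star(\alpha))=\sum_{k\ge0}\tfrac1{k!}U^\prime(\mcurv(\alpha),\alpha,\dots,\alpha)$, lies in $\mmF_p(\mcone(\psi))$ with differential in $\mmF_{p+r}(\mcone(\psi))$ --- into \eqref{W14B14a}$\Rightarrow$\eqref{W14B14}, obtaining $x\in\mmF_{p-r+1}L$, $y\in\mmF_{p-r+1}\tilde{L}$ with $d_L(x)\in\mmF_p L$, $\psi(x)-d_{\tilde{L}}(y)\in\mmF_p\tilde{L}$, $\mcurv(m_{(p)})-d_L(x)\in\mmF_{p+1}L$ and $(U_\star(m_{(p)})-\tilde{m}_{(p)})-(\psi(x)-d_{\tilde{L}}(y))\in\mmF_{p+1}\tilde{L}$. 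After replacing $x,y$ by $x^\prime\in\mmF_r L$, $y^\prime\in\mmF_r\tilde{L}$ cohomologous to them (so $d_L(x^\prime)=d_L(x)$ and $\psi(x^\prime)-d_{\tilde{L}}(y^\prime)=\psi(x)-d_{\tilde{L}}(y)$), I set $m_{(p+1)}\meqd m_{(p)}-x^\prime$ and take $\tilde{m}_{(p+1)}$ to be the endpoint of the rectified $1$-cell of Lemma \ref{Lemma W14B1} from $\tilde{m}_{(p)}$ with $dt$-coefficient $-y^\prime$ (rectifying via Lemma \ref{Lemma W14B2} where necessary). Since $x^\prime,y^\prime\in\mmF_r$ and $p\ge r$, the cross terms $\{m_{(p)},x^\prime\}$, $\{\tilde{m}_{(p)},y^\prime\}$ and the higher brackets fall into $\mmF_{p+1}$, which gives $\mcurv(m_{(p+1)})\in\mmF_{p+1}L$ and $U_\star(m_{(p+1)})-\tilde{m}_{(p+1)}\in\mmF_{p+1}\tilde{L}$, closing the induction. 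Completeness then produces $m\meqd\lim_p m_{(p)}\in\mmF_r L$ with $\mcurv(m)=\lim_p\mcurv(m_{(p)})=0$, hence $m\in\mMC(L)$; and $U_\star(m)=\lim_p\tilde{m}_{(p)}$ while the infinite composite of the rectified gauges (convergent by decay of their $dt$-coefficients) exhibits $U_\star(m)\sim\tilde{m}$, so $U_\star[m]=[\tilde{m}]$.

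\textbf{Injectivity.} For $m,m^\prime\in\mMC(L)$ with $U_\star(m)\sim U_\star(m^\prime)$, I would take $m,m^\prime\in\mmF_r L$ and twist by $m$ via \eqref{W15C1}--\eqref{W15C2}; because $m\in\mmF_r L$ and $U_\star(m)\in\mmF_r\tilde{L}$, the twisted differentials differ from the untwisted ones by operators raising filtration degree by $\ge r$, hence induce nothing on $E_0,\dots,E_{r-1}$, so $E_r(\mcone(\psi))=0$ and all the $H^\bullet$-hypotheses persist, and the identifications $\mMC(L^m)\cong\mMC(L)$, $\mMC(\tilde{L}^{U_\star(m)})\cong\mMC(\tilde{L})$ preserve gauge equivalence and intertwine $U^m_\star$ with $\alpha\mapsto U_\star(m+\alpha)-U_\star(m)$. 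It then suffices to show: $n\in\mmF_r L\cap\mMC(L)$ with $U_\star(n)\sim 0$ implies $n\sim 0$. Picking a rectified $1$-cell $\tilde{\beta}$ in $\mmMC(\tilde{L})$ from $U_\star(n)$ to $0$, I would rerun the surjectivity scheme inside $L\hat{\otimes}\Omega_1$ --- whose spectral sequences and the relevant cohomologies coincide with those of $L$ because $\Omega_1$ is acyclic, so $E_r(\mcone(\psi^{(1)}))=0$ --- to build a sequence of genuine $1$-cells $\beta^{(p)}$ in $\mmMC(L)$ with $\beta^{(p)}_0(0)=n$ (prescribing $\beta^{(p)}_1\in\mmF_r L\hat{\otimes}\mhK[t]$ and solving the IVP of Lemma \ref{Lemma W14B1}, which makes each $\beta^{(p)}$ Maurer--Cartan automatically) such that $\beta^{(p)}_0(1)\in\mmF_p L$ tends to $0$; the corrections come, exactly as above, from \eqref{W14B14a}$\Rightarrow$\eqref{W14B14} applied to the joint cone cochain recording the endpoint defect of $\beta^{(p)}$ together with its mismatch against $\tilde{\beta}$, pushed into $\mmF_r$ by Lemma \ref{Lemma W14Z1} and $H^{-1}((\mmF_1\tilde{L})/(\mmF_r\tilde{L}))=0$ and rectified by Lemma \ref{Lemma W14B2}. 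The limiting $1$-cell connects $n$ to $0$, so $n\sim 0$.

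\textbf{Main obstacle.} The conceptual skeleton is the Goldman--Millson argument; the work lies in the filtration bookkeeping caused by the mismatch between the ``jump by $r$'' hard-wired into \eqref{W14B14} and the ``jump by doubling'' that the Maurer--Cartan equation forces on curvatures. The crucial and delicate point is that at each stage one must package \emph{both} obstructions --- the Maurer--Cartan defect on the $L$-side and the mismatch with the target --- into a single cone cochain, so that one application of the $E_r$-vanishing resolves both simultaneously; the resulting corrections live only in $\mmF_{p-r+1}$ and must be transported into $\mmF_r$ (possible precisely because their differentials land in $\mmF_p\subset\mmF_r$ and the cohomology hypotheses hold) without spoiling the lower-filtration approximation already achieved or breaking the Maurer--Cartan/$1$-cell property --- and verifying that every one of these filtration estimates genuinely propagates through the induction is where each hypothesis of the theorem gets used. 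A final point needing completeness is that the infinitely many gauges produced on the target side must be shown to concatenate, via a shrinking sequence of subintervals and the decay of their $dt$-coefficients, into one honest gauge equivalence.
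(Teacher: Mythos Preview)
Your surjectivity argument is essentially the paper's: reduce the target to $\mmF_r\tilde L$ via Lemma~\ref{Lemma W19A1}, feed the pair $(\mcurv(m_{(p)}),\,U_\star(m_{(p)})-\tilde m_{(p)})$ into \eqref{W14B14a}$\Rightarrow$\eqref{W14B14}, drag the resulting corrections from $\mmF_{p-r+1}$ into $\mmF_r$ using Lemma~\ref{Lemma W14Z1} on the $L$-side and $H^{-1}((\mmF_1\tilde L)/(\mmF_r\tilde L))=0$ on the $\tilde L$-side, and move $\tilde m_{(p)}$ by a rectified gauge with $dt$-coefficient $-y'$. The paper separates the range $r+1\le p<2r-1$ (where the drag-up is actually needed) from $p\ge 2r-1$ (where $\mmF_{p-r+1}\subset\mmF_r$ already), but this is only cosmetic.

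For injectivity your twisting reduction to ``$n\in\mmF_rL\cap\mMC(L)$ with $U_\star(n)\sim0$ implies $n\sim0$'' is clean and makes explicit a step the paper leaves unsaid. But the inductive engine you sketch after that has a gap. The cochain fed into \eqref{W14B14a} must be a pair in $\mmF_pL\oplus\mmF_p\tilde L$; the paper takes $a=a_p$ (your endpoint $\beta^{(p)}_0(1)$) and $b=\gamma_1^{(p)}$, the $dt$-coefficient of a rectified $1$-cell $\gamma^{(p)}$ in $\mmMC(\tilde L)$ connecting $0$ to $U_\star(a_p)$ with $\gamma_1^{(p)}\in\mmF_p\tilde L$. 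What you do not address is how this witness $\gamma^{(p)}$ is propagated: once $a_p$ is moved to $a_{p+1}$ by the $1$-cell $\xi^{(p+1)}$ in $L$, one must concatenate $\gamma^{(p)}$ with $U_\star^{(1)}(\xi^{(p+1)})$ and re-rectify, while keeping the $dt$-part in $\mmF_{p+1}\tilde L$. The paper does this with a dedicated horn-filling result (Lemma~\ref{Lemma W19A2}) that controls the $dt$-coefficient of the filled edge modulo $\mmF_{p+1}\tilde L$, followed by the $dt$-adjustment of Lemma~\ref{Lemma W14B2}. Your phrase ``mismatch against $\tilde\beta$'' does not capture this, since after the very first step $\tilde\beta$ no longer has $U_\star(a_p)$ as an endpoint; and rerunning the surjectivity scheme verbatim inside $L\hat\otimes\Omega_1$ only produces a preimage of $\tilde\beta$ up to gauge equivalence in $\mmMC(\tilde L\hat\otimes\Omega_1)$, which gives no control on the endpoints in $L$. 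The concatenation-with-filtration-control step is the one genuinely new technical ingredient on the injectivity side, and it is missing from your plan.
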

	\begin{proof}[\textbf{Proof of Theorem \ref{Thm W14B1}}]
		\hfill\\
		We show surjectivity and injectivity separately.
		\subsection{Surjectivity}
		\label{Surjectivity 1}\hfill\\
		We prove by induction on $p$ the following statement, from which surjectivity directly follows:\\
		\underline{Statement:}\\
		Let $b \in \mMC (\tilde{L})$ be arbitrary.\\
		Then there exists a sequence $\{a_p\}_{p \geq r+1}$ of degree $0$ elements in $L$, a sequence $\{ b_p\}_{p \geq r+1}$ of Maurer-Cartan elements in $\mMC(\tilde{L})$ and a sequence $\{\gamma^{(p)} \}_{p \geq r+2}$ of rectified 1-cells in $\mmMC(\tilde{L})$, such that:
		\begin{enumerate}
			\item $ b_{r+1} \sim b$.
			\item $a_p \in \mmF_r L$ and $a_p- a_{p-1} \in \mmF_{p-r} L$.
			\item $\mcurv(a_p) \in \mmF_p L$.
			\item $ b_p \in \mmF_r \tilde{L}$ and $\gamma^{(p)} = \gamma_0^{(p)} (t) + dt~\gamma_1^{(p)}$ satisfies $\gamma_0^{(p)}(0) = b_{p-1}$ and $\gamma_0^{(p)} (1) = b_p$ (thus $b_{p} \sim b_{p-1}$), as well as $\gamma_1^{(p)} \in \mmF_{p-r} \tilde{L}$.\\
			In addition, $b_p - b_{p-1} \in \mmF_{p-r} \tilde{L}$ holds for $p > 2r-1$.
			\item $U_\star (a_p) = b_p~\mmod ~\mmF_p \tilde{L}$.
		\end{enumerate}
		\underline{Proof of Statement:}\hfill\\[-20pt]
		\subsubsection{\underline{Base of Induction:}}\hfill\\
		\underline{Constructing $a_{r+1}$:}\hfill\\
		Because of Lemma \ref{Lemma W19A1}, there exists a Maurer-Cartan element $b_r \in \mMC(\tilde{L})$ of filtration degree $b_r \in \mmF_r \tilde{L}$, which is gauge equivalent to $b$.\\
		Applying the Maurer-Cartan equation to it shows
		\begin{align}
			\label{W14B54}
			d_{\tilde{L}} (b_{r}) = \ubr{- \sum_{m=2}^{\infty} \frac{1}{m!} \{ b_{r}, \ldots, b_{r}   \}_m }_{\in \mmF_{2 r} \tilde{L}} \in \mmF_{r+r} \tilde{L}.
		\end{align}
		But then Equation \eqref{W14B54} makes sure that all the requirements of Equation \eqref{W14B14a}, in the case of $p=r$, $b_{r}$ in the role of $b$ and $a$ set to zero, are satisfied.\\
		Thus, Equation \eqref{W14B14} yields:
		\begin{align}
			&\left\{
			\begin{aligned}
				\exists a &\in \mmF_1 L\\
				\exists y &\in \mmF_1 \tilde{L}
			\end{aligned}
			\right.\\
			\nonu \mst\\
			\label{W14B56}
			&\left\{
			\begin{aligned}
				d_L (a) &\in \mmF_r L\\
				\psi (a ) - d_{\tilde{L}} (y) &\in \mmF_r \tilde{L}
			\end{aligned}
			\right.\\	
			\nonumber \mand\\
			\label{W14B57}
			&\left\{
			\begin{aligned}
				d_L (a) &\in \mmF_{r+1}	L\\
				b_{r} - \psi (a) + d_{\tilde{L}} (y) &\in \mmF_{r+1} \tilde{L}.
			\end{aligned}
			\right.	
		\end{align}
		Because of Lemma \ref{Lemma W14Z1} and Equation \eqref{W14B56}, we recognise
		\begin{align*}
			\begin{aligned}
				\exists \tilde{a} &\in \mmF_1 L~\mst\\
				a- d_L (\tilde{a}) &\in \mMF_r L,
			\end{aligned}
		\end{align*}
		which, in turn, allows us to set
		\begin{align}
			\label{W14B59}
			a_{r+1} \meqd a - d_L (\tilde{a}) \in \mmF_r L.
		\end{align}
		This so-constructed $a_{r+1}$ satisfies
		\begin{align}
			\label{W14B75b}
			\mcurv (a_{r+1}) = \ubr{d_L (a_{r+1})}_{\meqc{\eqref{W14B59}} d_L(a) \minc{\eqref{W14B57}} \mmF_{r+1} L} + \ubr{\sum_{m=2}^{\infty} \frac{1}{m!} \{ \ubr{a_{r+1}}_{\minc{\eqref{W14B59}} \mmF_r L} , \ldots, a_{r+1}  \}_m }_{\in \mmF_{2r} L \subset \mmF_{r+1} L} \in \mmF_{r+1} L.
		\end{align}
		\underline{Constructing $b_{r+1}$:}\hfill\\
		Recalling the fact that $\psi$ is linear and commutes with the differential leads to
		\begin{align}
			\label{W14B60}
			\left\{
			\begin{aligned}
				\psi (a_{r+1}) &\hspace{-4pt}\meqc{\eqref{W14B59}} \psi (a ) - d_{\tilde{L}} (\psi (\tilde{a})) \meqc{\eqref{W14B57}}	b_{r} + d_{\tilde{L}} (y) - d_{\tilde{L}} (\psi (\tilde{a})) + \mcO ( \mmF_{r+1} \tilde{L}) \\
				&= b_{r} + d_{\tilde{L}} (\hat{y}) ~\mmod~\mmF_{r+1} \tilde{L},
			\end{aligned}
			\right.	
		\end{align}
		where we set
		\begin{align}
			\label{W14B61}
			\hat{y} \meqd y - \psi (\tilde{a}).
		\end{align}
		On the other hand, compatibility with filtrations shows
		\begin{align*}
			d_{\tilde{L}} (\hat{y}) \meqc{\eqref{W14B60}} \ubr{\psi (a_{r+1})}_{\minc{\eqref{W14B59}} \mmF_r \tilde{L}} - \ubr{b_{r}}_{\in \mmF_r \tilde{L}} + \mcO (\mmF_{r+1} \tilde{L} )  \in \mmF_r \tilde{L},
		\end{align*} 
		and so making use of $H^{-1}((\mmF_1 \tilde{L}) /(\mmF_r \tilde{L}))=0$ indicates that
		\begin{align}
			\label{W14B63}
			\left\{
			\begin{aligned}
				\exists \tilde{y} &\in \mmF_1 \tilde{L}~\mst\\
				\hat{y} - d_{\tilde{L}} (\tilde{y}) &\in \mmF_r \tilde{L}.
			\end{aligned}
			\right.	
		\end{align}
		We then set
		\begin{align}
			\label{W14B64}
			y_{\mnew} \meqd \hat{y} - d_{\tilde{L}} (\tilde{y})
		\end{align}
		and realise, that Equation \eqref{W14B57} still holds for $a$ replaced by $a_{r+1}$ and $y$ changed to $y_{\mnew}$, since
		\begin{align}
			\label{W14B65}
			\left\{
			\begin{aligned}
				&b_{r} - \psi (a_{r+1}) + d_{\tilde{L}} (y_{\mnew})\\
				&\hspace{-7pt}\meqc{\eqref{W14B60}} b_{r}- \left( b_{r}+ d_{\tilde{L}} (\hat{y}) + \mcO ( \mmF_{r+1}\tilde{L}) \right) + d_{\tilde{L}} (\hat{y} ) = 0 ~\mmod ~\mmF_{r+1} \tilde{L}.
			\end{aligned}
			\right.
		\end{align}
		Next, we construct a rectified 1-cell $\gamma= \gamma_0 (t) + dt~\gamma_1$ in $\mmMC(\tilde{L})$ in the usual manner by setting its starting point to $b_{r}$ and demanding
		\begin{align}
			\label{W14B66}
			\gamma_1 \meqd y_{\mnew}.
		\end{align}
		Due to Lemma \ref{Lemma W14B1}, $\gamma_0 (t)$ is found to be
		\begin{align*}
			\gamma_0 (t) = \ubr{ b_{r}}_{\in \mmF_r \tilde{L}}  + \int_0^{t} dt_1 \Big( \ubr{d_{\tilde{L}} (y_{\mnew})}_{\minc{\substack{\eqref{W14B63}\\\eqref{W14B64}}} \mmF_r \tilde{L}} + \sum_{m=1}^{\infty} \frac{1}{m!} \{ \gamma_0 (t_1), \ldots, \gamma_0 (t_1), \hspace{-3pt}\ubr{y_{\mnew}}_{\minc{\substack{\eqref{W14B63}\\\eqref{W14B64}}} \mmF_r \tilde{L}} \hspace{-3pt} \}_{m+1} \Big),  
		\end{align*}
		i.e.
		\begin{align}
			\label{W14B68}
			\gamma_0 (t) \in \mmF_r \tilde{L} \hat{\otimes } \mhK[t].
		\end{align}
		We set $b_{r+1}$ to be the endpoint of this 1-cell, i.e.
		\begin{align}
			\label{W14B69}
			b_{r+1} \meqd \gamma \vert_{t=1} = \gamma_0 (1).
		\end{align}
		Thus, it satisfies
		\begin{align}
			\label{W14B70}
			\left\{
			\begin{aligned}
				&b_{r+1} \meqd  \gamma_0 (1)\\
				&=b_{r} + \int_0^1 dt \Big( d_{\tilde{L}} (y_{\mnew}) + \ubr{\sum_{m=1}^{\infty} \frac{1}{m!} \{ \hspace{-6pt} \ubr{\gamma_0 (t)}_{\minc{\eqref{W14B68}} \mmF_r \tilde{L} \hat{\otimes} \mhK [t]} \hspace{-6pt}, \ldots, \gamma_0 (t), \hspace{-3pt}\ubr{y_{\mnew}}_{\minc{\substack{\eqref{W14B63}\\\eqref{W14B64}}} \mmF_r \tilde{L}} \hspace{-3pt} \}_{m+1} }_{\in \mmF_{2r} \tilde{L}\hat{\otimes } \mhK [t] \subset \mmF_{r+1} \tilde{L}\hat{\otimes } \mhK [t]} \Big)\\
				&=\ubr{b_{r}}_{\in \mmF_r \tilde{L}} + \ubr{d_{\tilde{L}} (y_{\mnew})}_{\minc{\substack{\eqref{W14B63}\\\eqref{W14B64}}} \mmF_r \tilde{L}} + \mcO ( \mmF_{r+1} \tilde{L}),
			\end{aligned}
			\right.	
		\end{align}
		so particularly
		\begin{align}
			\label{W14B71}
			b_{r+1} \in \mmF_r \tilde{L}
		\end{align}
		holds.\\
		\underline{Proving $U_\star (a_{r+1}) = b_{r+1}~\mmod~ \mmF_{r+1} \tilde{L}$:}\hfill\\
		Due to the definition of $U_\star$ (cf. Equation \eqref{W14B8}) and $a_{r+1} \minc{\eqref{W14B59}} \mmF_r L$, it is obvious that
		\begin{align}
			\label{W14B73}
			U_\star (a_{r+1}) = \psi (a_{r+1}) + \mcO (\mmF_{r+1} \tilde{L}).
		\end{align}
		Hence, collecting Equations \eqref{W14B65}, \eqref{W14B70} and \eqref{W14B73} eventually results in
		\begin{align*}
			U_\star (a_{r+1}) - b_{r+1} \meqc{\substack{\eqref{W14B70} \\ \eqref{W14B73}}} \ubr{ \psi (a_{r+1}) - \left( b_{r} + d_{\tilde{L}} (y_{\mnew}) \right)}_{\minc{\eqref{W14B65}} \mmF_{r+1} \tilde{L}} + \mcO(\mmF_{r+1} \tilde{L} ),
		\end{align*}\vspace{-4pt}
		i.e. \vspace{-1pt}
		\begin{align}
			\label{W14B75}
			U_\star (a_{r+1}) = b_{r+1} ~\mmod ~\mmF_{r+1} \tilde{L}.
		\end{align}
		\subsubsection{\underline{Induction Step:}}\hfill\\
		\underline{Preparation:}\hfill\\
		We start by setting
		\begin{align}
			\label{W14B76}
			\tilde{b} \meqd U_\star (a_p)- b_p \in \mmF_p \tilde{L},
		\end{align}
		which holds due to the assumption.\\
		We observe that
		\begin{align*}
			d_L (\mcurv (a_p)) + \ubr{\sum_{k=1}^{\infty} \frac{1}{k!} \{ \ubr{a_p}_{\in \mmF_r L}, \ldots, a_p , \ubr{\mcurv(a_p)}_{\in \mmF_p L}  \}_{k+1} }_{\in \mmF_{p+r} L} \meqc{\text{(2.17) from \cite{DolgushevEnhancement}}} 0,
		\end{align*}
		i.e.
		\begin{align}
			\label{W14B79}
			d_L (\mcurv (a_p)) = 0 ~\mmod~\mmF_{p+r} L
		\end{align}
		holds.\vspace{5pt}\\
		We continue by computing
		\begin{align*}
			\begin{aligned}
				\psi (\mcurv (a_p)) &\meqc{\text{(2.18) from \cite{DolgushevEnhancement}}} \mcurv (U_\star (a_p)) - \ubr{\sum_{m=1}^{\infty} \frac{1}{m!} U^\prime (\ubr{\ubr{a_p}_{\in \mmF_r L}, \ldots, a_p}_{\text{m-times}}, \ubr{\mcurv (a_p)}_{\in \mmF_p L})}_{\in \mmF_{p+r} \tilde{L}}\\
				&\hspace{21pt}=\nonu \mcurv (U_\star (a_p)) + \mcO (\mmF_{p+r} \tilde{L}) \meqc{\eqref{W14B76}} \mcurv (\tilde{b} + b_p) + \mcO (\mmF_{p+r} \tilde{L}) \\
				& \meqc{\text{(2.20) from \cite{DolgushevEnhancement}} } \nonu \ubr{\mcurv(b_p)}_{\meqc{b_p \in \mMC(\tilde{L})} 0} + d_{\tilde{L}}^{b_p} (\tilde{b}) + \sum_{m=2}^{\infty} \frac{1}{m!} \{\tilde{b}, \ldots, \tilde{b}\}_m^{b_p} + \mcO (\mmF_{p+r} \tilde{L})\\
				&\hspace{21pt}=\nonu d_{\tilde{L}} (\tilde{b}) + \ubr{ \sum_{k=1}^{\infty} \frac{1}{k!} \{ \ubr{b_p}_{\in \mmF_r \tilde{L} } ,\ldots, b_p, \ubr{\tilde{b}}_{\minc{\eqref{W14B76}} \mmF_p \tilde{L}} \}_{k+1} }_{\in \mmF_{p+r} \tilde{L}}\\
				&\hspace{21pt}+\ubr{\sum_{m=2}^{\infty} \frac{1}{m!} \sum_{k=0}^{\infty} \frac{1}{k!} \{ \ubr{b_p, \ldots, b_p}_{\text{k-times}} , \ubr{\ubr{\tilde{b}}_{\minc{\eqref{W14B76}} \mmF_p \tilde{L}}, \ldots, \tilde{b}}_{\text{m-times}}  \}_{k+m}}_{\in \mmF_{2p} \tilde{L} \underset{p \geq r+1}{\subset} \mmF_{p+r} \tilde{L}} + \mcO ( \mmF_{p+r} \tilde{L})\\
				&\hspace{21pt}=\nonu d_{\tilde{L}} (\tilde{b}) + \mcO (\mmF_{p+r} \tilde{L}),
			\end{aligned}	
		\end{align*}
		i.e.
		\begin{align}
			\label{W14B81}
			\psi(\mcurv(a_p))- d_{\tilde{L}} (\tilde{b}) \in \mmF_{p+r} \tilde{L}.
		\end{align}
		Due to Equations \eqref{W14B79} and \eqref{W14B81}, all the requirements of Equation \eqref{W14B14a}, in the setting of $p=p$, $\mcurv(a_p)$ playing the role of $a$ and $\tilde{b}$ playing the role of $b$, are satisfied.\\
		Thus, Equation \eqref{W14B14} implies 
		\begin{align}
			\label{W14B82}
			&\left\{
			\begin{aligned}
				\exists \tilde{a} &\in \mmF_{p-r+1} L\\
				\exists y &\in \mmF_{p-r+1} \tilde{L}
			\end{aligned}
			\right.\\
			\nonu \mst\\
			\label{W14B83}
			&\left\{
			\begin{aligned}
				d_L (\tilde{a}) &\in \mmF_p L\\
				\psi (\tilde{a}) - d_{\tilde{L}} (y) &\in \mmF_p \tilde{L}
			\end{aligned}
			\right.\\	
			\nonu \mand\\
			\label{W14B84}
			&\left\{
			\begin{aligned}
				\mcurv (a_p) - d_L (\tilde{a}) &\in \mmF_{p+1} L\\
				\tilde{b} - \psi (\tilde{a}) + d_{\tilde{L}} (y) &\in \mMF_{p+1} \tilde{L}.	
			\end{aligned}
			\right.		
		\end{align}
		We continue by distinguishing the two cases $r+1 \leq p < 2r-1$ and $p \geq 2r-1$.
		\underline{\textbf{Case 1: $p \geq 2r-1$:}}\hfill\\
		\underline{Constructing $a_{p+1}$:}\hfill\\
		We set
		\begin{align}
			\label{W14B85}
			a_{p+1} \meqd a_p - \tilde{a} \in \mmF_r L,
		\end{align}
		by the virtue of Equation \eqref{W14B82} and $p \geq 2r-1$.\\[3pt]
		Further, we compute the degree of filtration of $\mcurv(a_{p+1})$ via
		\begin{align}
			\label{W14B86}
			\left\{
			\begin{aligned}
				&\hspace{21pt}\mcurv (a_{p+1})\meqc{\eqref{W14B85}} \mcurv (a_p - \tilde{a})\\
				&\meqc{\text{(2.20) from \cite{DolgushevEnhancement}}} \mcurv(a_p) + d_L^{a_p} (- \tilde{a}) + \sum_{m=2}^{\infty} \frac{1}{m!} \{ - \tilde{a}, \ldots, - \tilde{a}\}_m^{a_p} \\
				&\hspace{21pt}=\ubr{\mcurv(a_p) + d_L (- \tilde{a})}_{\minc{\eqref{W14B84}} \mmF_{p+1} L} + \ubr{\sum_{k=1}^{\infty} \frac{1}{k!} \{\ubr{a_p}_{\in \mmF_r L}, \ldots , a_p,  \ubr{- \tilde{a}}_{\minc{\eqref{W14B82}} \mmF_{p-r+1} L} \}_{k+1}}_{\in \mmF_{r+(p-r+1)} L = \mmF_{p+1} L}\\
				&\hspace{21pt}+\ubr{\sum_{m=2}^{\infty} \frac{1}{m!} \sum_{k=0}^{\infty}\frac{1}{k!} \{\ubr{a_p, \ldots, a_p}_{\text{k-times}} , \ubr{\ubr{- \tilde{a}}_{\minc{\eqref{W14B82}} \mmF_{p-r+1} L}, \ldots,-\tilde{a}}_{\text{m-times}} \}_{m+k}}_{\in \mmF_{2(p-r+1)} L {\subset} \mmF_{p+1} L} \in \mmF_{p+1} L.
			\end{aligned}
			\right.	
		\end{align}
		\underline{Constructing $b_{p+1}$:}\hfill\\
		Next, we define a rectified 1-cell $\gamma^{(p+1)} = \gamma_0^{(p+1)} (t) + dt~\gamma_1^{(p+1)}$ in $\mmMC(\tilde{L})$ by setting its starting point to $b_p$ and demanding 
		\begin{align}
			\label{W14B87}
			\gamma_1^{(p+1)} \meqd -y.
		\end{align}
		Integrating the corresponding IVP (cf. Lemma \ref{Lemma W14B1}) and analysing its degree of filtration yields
		\begin{align}
			\label{W14B88}
			\gamma_0^{(p+1)} (t) \in \mmF_r \tilde{L} \hat{\otimes } \mhK[t].	
		\end{align}
		We set $b_{p+1}$ to be the endpoint, i.e.
		\begin{align}
			\label{W14B89}
			b_{p+1} \meqd \gamma^{(p+1)} \vert_{t=1} = \gamma_0^{(p+1)} (1) \minc{\eqref{W14B88}} \mmF_r \tilde{L},
		\end{align}
		and compute
		\begin{align}
			\label{W14B90}
			\left\{
			\begin{aligned}
				&b_{p+1} \meqd \gamma_0^{(p+1)} (1)\\
				&=b_p + \int_{0}^{1}\hspace{-3pt} dt \Big( d_{\tilde{L}} (-y) + \ubr{\sum_{m=1}^{\infty} \frac{1}{m!} \{ \ubr{\gamma_0^{(p+1)} (t)}_{\minc{\eqref{W14B88}} \mmF_r \tilde{L} \hat{\otimes} \mhK [t]} , \ldots, \gamma_0^{(p+1)}(t),\hspace{-15pt} \ubr{-y}_{\minc{\eqref{W14B82}} \mmF_{p-r+1} \tilde{L}}\hspace{-13pt}  \}_{m+1}}_{\in \mmF_{p+1} \tilde{L} \hat{\otimes } \mhK[t]} \Big)\\[-8pt]
				&=b_p - d_{\tilde{L}} (y) + \mcO(\mmF_{p+1} \tilde{L}).
			\end{aligned}
			\right.
		\end{align}
		\underline{Proving $U_\star (a_{p+1}) = b_{p+1}~\mmod~ \mmF_{p+1} \tilde{L}$:}\hfill\\
		The difference between $U_\star (a_{p+1})$ and $b_{p+1}$ is found to be
		\begin{align*}
			\begin{aligned}
				U_\star (a_{p+1}) - b_{p+1} &\meqc{\substack{\eqref{W14B85}\\\eqref{W14B90}}}
				\ubr{-\psi (\tilde{a})+ \ubr{U_\star (a_p) - b_p}_{\meqc{\eqref{W14B76}} \tilde{b}} + d_{\tilde{L}} (y)}_{\minc{\eqref{W14B84}} \mmF_{p+1} \tilde{L}}\\
				&\hspace{5pt}+\bigg( \sum_{m=2}^{\infty} \frac{1}{m!} U^\prime \big(\ubr{(a_p - \tilde{a}), \ldots, (a_p - \tilde{a})}_{\text{m-times}} \big) + \psi (a_p) - U_\star (a_p)  \bigg) + \mcO (\mmF_{p+1} \tilde{L}).
			\end{aligned}
		\end{align*}
		By unravelling the definition of $U_\star$ and repeated use of the multilinearity of $U^\prime$, we recognise the bracket lying in $\mmF_{p+1} \tilde{L}$.\\
		Thus, we may deduce
		\begin{align}
			\label{W14B92}
			U_\star (a_{p+1}) = b_{p+1} ~\mmod~\mmF_{p+1} \tilde{L}
		\end{align}
		to hold.\\
		\underline{\textbf{Case 2: $r+1 \leq p < 2r-1$:}}\hfill\\
		\underline{Constructing $a_{p+1}$:}\hfill\\
		Equation \eqref{W14B83} together with Lemma \ref{Lemma W14Z1} implies
		\begin{align}
			\label{W14B93}
			\left\{
			\begin{aligned}
				\exists \hat{a} &\in \mmF_1 L~\mst\\
				\tilde{a} - d_L ( \hat{a}) &\in \mmF_r L.
			\end{aligned}
			\right.	
		\end{align}
		We then set
		\begin{align}
			\label{W14B94}	
			a_{p+1}  \meqd \ubr{a_p}_{\in \mmF_r L} - \big( \ubr{\tilde{a} - d_L ( \hat{a})}_{\minc{\eqref{W14B93}} \mmF_r L} \big) \in \mmF_r L.
		\end{align}
		Similar calculations as in Equation \eqref{W14B86} prove 
		\begin{align}
			\label{W14B105}
			\mcurv (a_{p+1}) \in \mmF_{p+1} \tilde{L}
		\end{align}
		to hold.\\
		\underline{Constructing $b_{p+1}$:}\hfill\\
		It is clear from Equations \eqref{W14B83} and \eqref{W14B93} that
		\begin{align}
			\label{W14B95}
			d_{\tilde{L}} (y - \psi (\hat{a})) \in \mmF_r \tilde{L}.
		\end{align}
		Setting
		\begin{align}
			\label{W14B96}
			\hat{y} \meqd y - \psi ( \hat{a})
		\end{align}
		and using $H^{-1}((\mmF_1 \tilde{L})/(\mmF_r \tilde{L}))=0$ leads to
		\begin{align*}
			\begin{aligned}
				\exists q &\in \mmF_1 \tilde{L}~\mst\\
				\hat{y} - d_{\tilde{L}} (q) &\in \mmF_r \tilde{L}.
			\end{aligned}
		\end{align*}
		Let $\gamma^{(p+1)}$ be the rectified 1-cell $\gamma^{(p+1)} = \gamma_0^{(p+1)} (t) + dt~\gamma_1^{(p+1)}$ in $\mmMC(\tilde{L})$ determined by the starting point $b_p$ and the condition
		\begin{align}
			\label{W14B98}
			\gamma_1^{(p+1)} \meqd - ( \hat{y} - d_{\tilde{L}} (q)).
		\end{align}
		Once more, we construct $b_{p+1}$ as the endpoint of this 1-cell, i.e.
		\begin{align}
			\label{W14B99}
			b_{p+1} \meqd \gamma_0^{(p+1)}(1).
		\end{align}
		According to Lemma \ref{Lemma W14B1}, $\gamma_0^{(p+1)} (t)$ is the solution of the corresponding IVP. Integration results in
		\begin{align}
			\label{W14B100}
			\gamma_0^{(p+1)} (t) \in \mMF_r \tilde{L} \hat{\otimes} \mhK[t].
		\end{align}
		This can then be used for estimating the degree of filtration in the expression of $b_{p+1}$ and so we get
		\begin{align}
			\label{W14B101}
			b_{p+1} = \ubr{b_p}_{\in \mmF_r \tilde{L}} - \ubr{d_{\tilde{L}} (\hat{y})}_{\minc{\substack{\eqref{W14B95}\\\eqref{W14B96}}} \mmF_r \tilde{L}} + \mcO(\mmF_{2r} \tilde{L}).
		\end{align}
		This ensures
		\begin{align}
			\label{W14B102}
			b_{p+1} \in \mmF_r \tilde{L}.
		\end{align}
		\underline{Proving $U_\star (a_{p+1})=b_{p+1}~\mmod ~
			\mmF_{p+1} \tilde{L}$:}\hfill\\
		Equation \eqref{W14B101} together with the definitions of $a_{p+1}$ and $U_\star$ allows us to calculate
		\begin{align*}
			\begin{aligned}
				&U_\star (a_{p+1}) - b_{p+1} \meqc{\substack{\eqref{W14B94}\\\eqref{W14B96}\\\eqref{W14B101}}} \ubr{- \psi (\tilde{a}) + \ubr{U_\star (a_p) - b_p}_{\meqc{\eqref{W14B76}} \tilde{b}} + d_{\tilde{L}} (y)}_{\minc{\eqref{W14B84}} \mmF_{p+1} \tilde{L}} + \ubr{d_{\tilde{L}} \big( \psi (\hat{a}) \big) - d_{\tilde{L}} \big( \psi (\hat{a}) \big)}_{=0} \\[-6pt]
				&+\bigg( \sum_{m=2}^{\infty} \frac{1}{m!} U^\prime \big(\ubr{a_p - (\tilde{a}- d_L (\hat{a})), \ldots, a_p - (\tilde{a}- d_L (\hat{a}))}_{\text{m-times}} \big) + \psi (a_p) - U_\star (a_p)  \bigg) + \ubr{\mcO(\mmF_{2r} \tilde{L})}_{{\subset} \mmF_{p+1} \tilde{L}}.
			\end{aligned}
		\end{align*}
		A closer investigation reveals the bracket to lie in $\mmF_{p+1} \tilde{L}$ and so we get
		\begin{align}
			\label{W14B104}
			U_\star (a_{p+1}) = b_{p+1} ~\mmod~\mmF_{p+1} \tilde{L}.
		\end{align}
		\subsection{Injectivity}\hfill\\
		Following \cite{Nicoud}, Lemma 6.4.3, for $b \in \mMC(L)$,
		\label{Injectivity Standard-Case}
		\begin{align*}
			\mathrm{Shift}_b: \mmMC (L^b) &\to \mmMC (L)\\
			a & \to a+b
		\end{align*}	
		describes an isomorphism of simplicial sets that makes the following diagram commute
		\[
		\begin{tikzcd}
			\mmMC (L^b) \arrow[rr, "\mathrm{Shift}_b"] \arrow[dd, "U_\star^b"] &  & {\mmMC(L)} \arrow[dd, "U_\star"] \\
			&  &                                                       \\
			\mmMC (\tilde{L}^{U_\star (b)}) \arrow[rr, "\mathrm{Shift}_{U_\star (b)}"]    &  & {\mmMC(\tilde{L})}           
		\end{tikzcd}
		\]
		and as such shows the equivalency of
		\begin{align*}
			U_\star (a) \sim U_\star (b)~ \text{in}~L &\implies a \sim b ~\text{in}~ \tilde{L}
		\end{align*}
		and
		\begin{align*}
			U_\star^b (a-b)\sim 0~ \text{in}~ L^b &\implies a-b \sim 0 ~\text{in}~ \tilde{L}^{U_\star (b)}.
		\end{align*}
			Hence, instead of the usual implication `$U_\star (a) \sim U_\star(b)$ implies $a \sim b$' for $a,b \in \mMC(L)$, it suffices to show that `$U_\star(a) \sim 0$ leads to $a \sim 0$'\footnote{More precisely $U_\star^b (a) \sim 0$ in $\tilde{L}^{U_\star(b)}$ leads to $a \sim 0$ in $L^b$, but as explained below, the twisting preserves the initial assumptions, hence it can be neglected in favour of a cleaner notation.}. \\
			Of course we have to check that also the twisted $\infty$-morphism $U^b : L^b \to \tilde{L}^{U_\star (b)}$ still satisfies all the assumptions.\\
			Since, according to Lemma \ref{Lemma W19A1}, $b$ is in any case gauge equivalent to a Maurer-Cartan element of filtration degree $\mmF_r L$, we can assume without loss of generality $b \in \mmF_r L$ and hence $U_\star (b) \in \mmF_r \tilde{L}$ as well.\\
			As $b \in \mmF_r L$, twisting with $b$ does not destroy the initial assumptions about acyclicity of co-homologies. Moreover, the linear part $\psi^b$ of the twisted $\infty$-morphism $U^b$ remains a quasi-isomorphism on the r-1 st page of the spectral sequence, as shown explicitly in the proof of Lemma \ref{Lemma W15D1}.\\
			By induction on $p$, we obtain the following statement from which the implication `$U_\star (a)\sim 0$ implies $a \sim 0$' directly follows:\\
		\underline{Statement:}\\
		Let $a \in \mMC(L)$ be an arbitrary Maurer-Cartan element which satisfies $U_\star (a) \sim 0$.\\
		Then there exists a sequence $\{a_p\}_{p \geq r}$ of Maurer-Cartan elements in $\mMC(L)$ and sequences $\{\gamma^{(p)}\}_{p \geq r}$ and $\{\xi^{(p)}\}_{p \geq r+2}$ of rectified 1-cells in $\mmMC(\tilde{L})$ and $\mmMC(L)$, respectively, such that:
		\begin{enumerate}
			\item $a_r \sim a$.
			\item $a_p \in \mmF_p L$.
			\item $\xi^{(p)} = \xi_0^{(p)} (t) + dt~\xi_1^{(p)}$ satisfies $\xi_0^{(p)} (0)= a_{p-1}$ and $\xi_0^{(p)} (1)= a_p$ (thus $a_p \sim a_{p-1}$), as well as $\xi_1^{(p)} \in \mmF_{p-r} L$.
			\item $ \gamma^{(p)} = \gamma_0^{(p)} (t) + dt~\gamma_1^{(p)}$ satisfies $\gamma_0^{(p)}  (0)=0$ and $ \gamma_0^{(p)} (1) = U_\star (a_p)$ (thus $0 \sim U_\star (a_p)$).\\
			In addition, $\gamma_1^{(p)} \in \mmF_p \tilde{L}$ holds.
		\end{enumerate}
		\underline{Proof of Statement:}\hfill\\[-20pt]
		\subsubsection{\underline{Base of induction:}}\hfill\\
		Due to Lemma \ref{Lemma W19A1}, there exists a Maurer-Cartan element $a_r \in \mMC(L)$ which has filtration degree $\mmF_r L$ and is gauge equivalent to $a$.\\
		This particularly implies
		\begin{align*}
			U_\star (a_r) \sim U_\star (a) \sim 0
		\end{align*}
		to hold.\\
		Therefore, there exists a 1-cell $\gamma^{(1)} = \gamma_0^{(1)} (t) + dt~\gamma_1^{(1)}$ in $\mmMC(\tilde{L})$ (due to \cite{DolgushevLinfty}, Lemma B2 we can w.l.o.g. assume $\gamma^{(1)}$ to be rectified) with some $\gamma_1^{(1)}\in \mmF_1 \tilde{L}$ such that
		\begin{align*}
			\gamma_0^{(1)} (0) = 0
		\end{align*}
		and
		\begin{align*}
			\gamma_0^{(1)} (1) = U_\star (a_r) \in \mmF_r \tilde{L}.
		\end{align*}
		According to Lemma \ref{Lemma W14B1}, for $\gamma^{(1)}$ to be a 1-cell in $\mmMC(\tilde{L})$ connecting $0$ with $U_\star (a_r)$, it must solve
		\begin{align*}
			\begin{aligned}
				\frac{\partial \gamma_0^{(1)} (t)}{\partial t} &= d_{\tilde{L}}^{\gamma_0^{(1)} (t)} (\gamma_1^{(1)})\\
				\gamma_0^{(1)} (0) &= 0\\
				\gamma_0^{(1)} (1) &= U_\star (a_r).
			\end{aligned}
		\end{align*}
		On one hand, integration up to some general $t$ yields
		\begin{align}
			\label{W14B111}
			\gamma_0^{(1)} (t) = 0 + t d_{\tilde{L}} (\gamma_1^{(1)}) + \int_{0}^{t} dt_1~\sum_{k=1}^{\infty} \frac{1}{k!} \{\gamma_0^{(1)}(t_1), \ldots, \gamma_0^{(1)}(t_1), \gamma_1^{(1)} \}_{k+1}.
		\end{align}
		On the other hand, we can also integrate up to $t=1$ and replace $\gamma_0^{(1)} (1)$ by $U_\star (a_r)$ and find
		\begin{align}
			\label{W14B112}
			\ubr{U_\star (a_r)}_{\in \mmF_r \tilde{L}} = 0 + d_{\tilde{L}} (\gamma_1^{(1)}) + \int_{0}^{1} dt~\sum_{k=1}^{\infty} \frac{1}{k!} \{ \gamma_0^{(1)} (t), \ldots, \gamma_0^{(1)} (t), \gamma_1^{(1)}  \}_{k+1}.
		\end{align}
		From the expression in the latter integral at least consisting of a $\{.,.\}_2$ bracket, it is obvious that
		\begin{align*}
			d_{\tilde{L}} (\gamma_1^{(1)}) \in \mmF_2 \tilde{L}.
		\end{align*}
		In turn, this can be plugged into Equation \eqref{W14B111}, where by the same arguments the r.h.s. is of filtration degree $\mmF_2 \tilde{L} \hat{\otimes } \mhK[t]$, hence
		\begin{align*}
			\gamma_0^{(1)} (t) \in \mmF_2 \tilde{L} \hat{\otimes } \mhK[t].
		\end{align*}
		But this may then again be used in Equation \eqref{W14B112} and so on and so forth.\\
		This procedure can be run several times until eventually we arrive at
		\begin{align}
			\label{W14B116}
			d_{\tilde{L}} (\gamma_1^{(1)}) \in \mmF_r \tilde{L}.
		\end{align}
		As it turns out, Equation \eqref{W14B116} is a good starting point for repeated application of Lemma \ref{Lemma W14B2}.\\
		By assumption we have $H^{-1}((\mmF_1 \tilde{L}) /(\mmF_r \tilde{L})) =0$. Together with Equation \eqref{W14B116}, this implies
		\begin{align*}
			\begin{aligned}
				\exists \hat{y}^{(1)} &\in \mmF_1 \tilde{L}~\mst\\
				\gamma_1^{(1)} - d_{\tilde{L}} (\hat{y}^{(1)}) &\in \mmF_r \tilde{L}.
			\end{aligned}
		\end{align*}
		It is clear, that all the requirements of Lemma \ref{Lemma W14B2} with $\gamma^{(1)}$ in the role of $\beta$ connecting the two Maurer-Cartan elements $0$ and $U_\star (a_r)$, both lying in $\mmF_r \tilde{L}$ (so $q = r \geq 1$), $\gamma_1^{(1)} \in \mmF_1 \tilde{L}$ (so $p=1$) and $\hat{y}^{(1)} \in \mmF_1 \tilde{L}$ (so $l=0$) acting as $y$ in the notation of the lemma, are satisfied.\\
		Thus, there exists a rectified 1-cell $\gamma^{(2)}= \gamma_0^{(2)} (t) +dt~\gamma_1^{(2)}$ in $\mmMC(\tilde{L})$ which connects $0$ with $U_\star (a_r)$ and has $\gamma_1^{(2)} \in \mmF_2 \tilde{L}$.\\
		We may replace $\gamma^{(1)}$ by $\gamma^{(2)}$ and perform the same steps as before. At this end, we arrive at the analogue of Equation \eqref{W14B116}, i.e. $d_{\tilde{L}} (\gamma_1^{(2)}) \in \mmF_r \tilde{L}$.\\
		Exploiting $H^{-1}((\mmF_1 \tilde{L}) /( \mmF_r \tilde{L}))=0$ leads to
		\begin{align*}
			\begin{aligned}
				\exists \hat{y}^{(2)} &\in \mmF_1 \tilde{L}~\mst\\
				\gamma_1^{(2)} - d_{\tilde{L}} (\hat{y}^{(2)}) &\in \mmF_r \tilde{L}.
			\end{aligned}
		\end{align*}
		But this time we know $\gamma_1^{(2)} \in \mmF_2 \tilde{L}$, wherefore we can apply Lemma \ref{Lemma W14B2} with $p=2$ instead.\\
		So, after several runs, we eventually arrive at a rectified 1-cell $\gamma^{(r)} = \gamma_0^{(r)} (t) + dt~\gamma_1^{(r)}$ in $\mmMC(\tilde{L})$ which connects $0$ with $U_\star (a_r)$ and has $\gamma_1^{(r)} \in \mmF_r \tilde{L}$.\\[-15pt]
		\subsubsection{\underline{Induction Step:}}\hfill\\
		\underline{Construction of $a_{p+1}$:}\hfill \\
		From $a_p \in \mmF_p L$ and $a_p \in \mMC(L)$, we may deduce 
		\begin{align}
			\label{W14B120a}
			d_L (a_p) \in \mmF_{p+r} L
		\end{align}
		in the usual manner.\\
		$\gamma^{(p)}$ is known to be a rectified 1-cell in $\mmMC(\tilde{L})$ connecting $0$ with $U_\star (a_p)$ and having $\gamma_1^{(p)} \in \mmF_p \tilde{L}$.\\
		According to Lemma \ref{Lemma W14B1}, $\gamma_0^{(p)} (t)$ can be seen as the solution of an IVP. Integration up to some arbitrary $t$ and a short analysis of the degree of filtration implies
		\begin{align}
			\label{W14B120}
			\gamma_0^{(p)} (t) \in \mmF_p \tilde{L} \hat{\otimes } \mhK[t].
		\end{align}
		By the definition of $U_\star$ and because of $a_p \in \mmF_p L$, clearly $U_\star (a_p) = \psi (a_p) + \mcO( \mmF_{p+r} \tilde{L})$ has to hold.\\
		Moreover, we have $U_\star (a_p)- \gamma_0^{(p)} (1)=0$ from $U_\star (a_p)$ being the endpoint of $\gamma^{(p)}$.\\
		Rewriting $\gamma_0^{(p)} (1)$ as the integral of the corresponding IVP and expressing $U_\star (a_p)$ in terms of $\psi (a_p)$ yields
		\begin{align*}
			\begin{aligned}
				\nonu 0 &= \left(\psi (a_p) + \mcO(\mmF_{p+r} \tilde{L} ) \right) - \Bigg( 0 + \int_{0}^{1} dt \Big( d_{\tilde{L}} (\gamma_1^{(p)}) + \ubr{\sum_{m=1}^{\infty} \frac{1}{m!} \{ \hspace{-8pt} \ubr{ \gamma_0^{(p)} (t)}_{\minc{\eqref{W14B120}} \mmF_p \tilde{L} \hat{\otimes } \mhK[t] } \hspace{-6pt}, \ldots, \gamma_0^{(p)} (t), \ubr{\gamma_1^{(p)}}_{\in \mmF_p \tilde{L}} \}_{m+1}   }_{\in \mmF_{2p} \tilde{L} \hat{\otimes} \mhK[t] \underset{p\geq r}{\subset} \mmF_{p+r} \tilde{L} \hat{\otimes } \mhK[t]} \Big) \Bigg)\\[-1cm]
				&=\psi (a_p) - d_{\tilde{L}} (\gamma_1^{(p)}) + \mcO(\mmF_{p+r} \tilde{L}),
			\end{aligned}	
		\end{align*}
		i.e.
		\begin{align}
			\label{W14B122}
			\psi (a_p) - d_{\tilde{L}} (\gamma_1^{(p)}) \in \mmF_{p+r} \tilde{L}.
		\end{align}
		Due to Equations \eqref{W14B120a} and \eqref{W14B122}, all the requirements of Equation \eqref{W14B14a} are satisfied and so we infer from Equation \eqref{W14B14} that
		\begin{align}
			\label{W14B123}
			&\left\{
			\begin{aligned}
				\exists x &\in \mmF_{p-r+1} L\\
				\exists \tilde{y} &\in \mmF_{p-r+1} \tilde{L}
			\end{aligned}
			\right.\\	
			\nonu \mst\\
			\label{W14B124}
			&\left\{
			\begin{aligned}
				d_L (x) &\in \mmF_p L\\
				\psi (x) - d_{\tilde{L}} (\tilde{y}) &\in \mmF_p \tilde{L}
			\end{aligned}
			\right.\\	
			\nonu \mand\\
			\label{W14B125}
			&\left\{
			\begin{aligned}
				a_p - d_L (x) &\in \mmF_{p+1} L\\
				\gamma_1^{(p)} - \psi (x) + d_{\tilde{L}} ( \tilde{y}) &\in \mmF_{p+1} \tilde{L}.
			\end{aligned}
			\right.	
		\end{align}
		We construct a rectified 1-cell $\xi^{(p+1)} = \xi_0^{(p+1)} (t) + dt~\xi_1^{(p+1)}$ in $\mmMC(L)$ by means of setting the starting point to $a_p$ and requiring
		\begin{align}
			\label{W14B126}
			\xi_1^{(p+1)} \meqd -x \minc{\eqref{W14B123}} \mmF_{p-r+1} L.
		\end{align}
		Integration of the corresponding IVP (cf. Lemma \ref{Lemma W14B1}) results in
		\begin{align}
			\label{W14B127}
			\xi_0^{(p+1)} (t) = \ubr{a_p}_{\in \mmF_p L}  + \int_{0}^{t} dt_1 \Big( \ubr{ d_L (-x)}_{\minc{\eqref{W14B124}} \mmF_p L} + \sum_{m=1}^{\infty} \frac{1}{m!} \{ \xi_0^{(p+1)}(t_1) , \ldots, \xi_0^{(p+1)} (t_1), \hspace{-12pt} \ubr{-x}_{\minc{\eqref{W14B123}} \mmF_{p-r+1} L} \hspace{-8pt} \}_{m+1} \Big).
		\end{align}
		Plugging ${\xi_0^{(p+1)} (t) \in \mmF_1 L \hat{\otimes} \mhK[t]}$ into Equation \eqref{W14B127} directly leads to ${\xi_0^{(p+1)} (t) \in \mmF_{p-r+2} L \hat{\otimes} \mhK[t]}$. But this can again be plugged into Equation \eqref{W14B127}, resulting in ${\xi_0^{(p+1)} (t) \in \mmF_{2p-2r+3} L \hat{\otimes } \mhK[t]}$ et cetera.\\
		We continue in the same manner and eventually arrive at
		\begin{align}
			\label{W14B128}
			\xi_0^{(p+1)} (t) \in \mmF_p L \hat{\otimes } \mhK[t].
		\end{align}
		Further, we set $a_{p+1}$ to be the endpoint of $\xi^{(p+1)}$, i.e.
		\begin{align}
			\label{W14B129}
			a_{p+1} \meqd \xi_0^{(p+1)} (1).
		\end{align}
		Of course, this can also be rewritten as the integral of the IVP, so we find
		\begin{align*}
			\begin{aligned}
				a_{p+1} &\meqd \xi_0^{(p+1)} (1)\\
				&=a_p + \int_{0}^{1} dt \Big( d_L (-x) + \ubr{\sum_{m=1}^{\infty} \frac{1}{m!} \{ \ubr{\xi_0^{(p+1)} (t)}_{\minc{\eqref{W14B128}} \mmF_p L \hat{\otimes } \mhK[t]} , \ldots, \xi_0^{(p+1)}(t) , \ubr{-x}_{\minc{\eqref{W14B123}} \mmF_{p-r+1} L}  \}_{m+1} }_{\in \mmF_{p+(p-r+1)} L \hat{\otimes } \mhK[t] \underset{p \geq r}{\subset} \mmF_{p+1} L \hat{\otimes } \mhK[t]} \Big) \\
				&=\ubr{a_p - d_L (x)}_{\minc{\eqref{W14B125}} \mmF_{p+1} L} + \mcO(\mmF_{p+1} L) \in \mmF_{p+1} L,
			\end{aligned}
		\end{align*}
		i.e.
		\begin{align}
			\label{W14B130}
			a_{p+1} \in \mmF_{p+1} L.
		\end{align}
		\underline{Connecting $0$ and $U_\star (a_{p+1})$:}\\
		Next, we construct a 1-cell in $\mmMC(\tilde{L})$ which connects $0$ and $U_\star(a_{p+1})$.
		For doing so, we apply Lemma \ref{Lemma W19A2} with $\xi^{(p+1)}$ in the role of $\xi$ and $\beta = \beta_0 (t) +dt~ \beta_1$ given by
		\begin{align}
			\left\{
			\begin{aligned}
				\beta_0(t) &\meqd \gamma_0^{(p)} (1-t)\\[-3pt]
				\beta_1 &\meqd - \gamma_1^{(p)}.
			\end{aligned}
			\right.	
		\end{align}
		As a result, we obtain a 1-cell $\kappa = \kappa_0 (t) +dt~ \kappa_1 (t)$ in $\mmMC(\tilde{L})$ connecting $0$ and $U_\star (a_{p+1})$ satisfying
		\begin{align}
			\label{W19A3}
			\kappa_1 (t) = \gamma_1^{(p)} - \psi (x) + \mcO (\mmF_{p+1} \tilde{L} \hat{\otimes } \mhK[t] ).
		\end{align}
		\underline{Adjusting the degree of filtration of the connection of $0$ and $U_\star (a_{p+1})$:}\\
		We continue by adjusting the $dt$ part of $\kappa_0(t)+ dt~\kappa_1 (t)$ by $d_{\tilde{L}} (\tilde{y})$ with $\tilde{y}$ as in Equation \eqref{W14B123} (in the sense of Lemma \ref{Lemma W14B2}). By means of Equations \eqref{W14B125} and \eqref{W19A3}, such a 1-cell would clearly have its $dt$ coefficient carrying filtration degree $\mmF_{p+1} \tilde{L}\hat{\otimes }\mhK[t]$.\\
		Using the fact that both the starting and the end point of $\kappa$ lie in $\mmF_{p+1} \tilde{L}$, we can follow the same ideas as in Lemma \ref{Lemma W14B2} even though $\kappa_1 (t)$ can only be said to be of filtration degree $\mmF_{p-r+1} \tilde{L} \hat{\otimes } \mhK[t]$ by the means of Equation \eqref{W19A3} and ${\psi(x) \hspace{-3pt}\minc{\eqref{W14B123}}\hspace{-3pt} \mmF_{p-r+1} \tilde{L}}$.\\
		Because of Equation \eqref{W19A3} we can rewrite $\kappa_1 (t)$ as
		\begin{align}
			\label{W14B157}
			\kappa_1 (t) = s+ q(t)
		\end{align}
		with some 
		\begin{align}
			\label{W14B158}
			q(t) \in \mmF_{p+1} \tilde{L} \hat{\otimes } \mhK[t]
		\end{align}
		and
		\begin{align}
			\label{W14B159}
			s \meqd \ubr{\gamma_1^{(p)}}_{\in \mmF_p \tilde{L}} - \hspace{-5pt} \ubr{\psi (x)}_{\minc{\eqref{W14B123}} \mmF_{p-r+1} \tilde{L}} \hspace{-15pt}.
		\end{align}
		From $\kappa_0 (t) + dt~ \kappa_1 (t)$ being a 1-cell in $\mmMC(\tilde{L})$, Lemma \ref{Lemma W14B1} indicates $\kappa_0 (t)$ to be the solution of the corresponding IVP.\\
		Integrating up to $t$ and $1$, using ${\kappa_0 (1)= U_\star (a_{p+1}) \hspace{-3pt}\minc{\eqref{W14B130}} \hspace{-3pt} \mmF_{p+1} \tilde{L}}$ and pulling out time-independent terms leads to
		\begin{align}
			\label{W14B160}
			\kappa_0 (t)=0 + t d_{\tilde{L}} (s)+ \int_0^t dt_1 \Big( \hspace{-8pt} \ubr{d_{\tilde{L}} (q(t_1))}_{\minc{\eqref{W14B158}} \mmF_{p+1} \tilde{L} \hat{\otimes } \mhK[t_1]} \hspace{-8pt} + \sum_{k=1}^{\infty} \frac{1}{k!} \{  \kappa_0 (t_1), \ldots, \kappa_0 (t_1), \kappa_1 (t_1)  \}_{k+1} \Big)
		\end{align}
		and
		\begin{align}
			\label{W14B161}
			\ubr{U_\star (a_{p+1})}_{\minc{\eqref{W14B130}} \mmF_{p+1} \tilde{L}} = 0 + d_{\tilde{L}} (s) + \int_{0}^{1} dt \Big( \hspace{-8pt} \ubr{d_{\tilde{L}} ( q(t))}_{\minc{\eqref{W14B158}} \mmF_{p+1} \tilde{L} \hat{\otimes } \mhK[t]} \hspace{-8pt} + \sum_{k=1}^\infty \frac{1}{k!} \{  \kappa_0 (t), \ldots, \kappa_0 (t), \kappa_1 (t)  \}_{k+1} \Big) .
		\end{align}
		Plugging $\kappa_1 (t) \in \mmF_{p-r+1} \tilde{L} \hat{\otimes } \mhK[t]$ into Equation \eqref{W14B161} directly yields
		\begin{align*}
			d_{\tilde{L}} (s) \in \mmF_{p-r+2} \tilde{L}.
		\end{align*}
		In turn, we can insert this into Equation \eqref{W14B160} and find
		\begin{align*}
			\kappa_0 (t) \in \mmF_{p-r+2} \tilde{L} \hat{\otimes } \mhK[t].
		\end{align*}
		This can then again be used in Equation \eqref{W14B161}, and so on and so forth.\\
		Eventually, this procedure leads to
		\begin{align}
			\label{W14B164}
			d_{\tilde{L}} (s) \in \mmF_{p+1} \tilde{L}
		\end{align}
		and
		\begin{align}
			\label{W14B165}
			\kappa_0 (t) \in \mmF_{p+1} \tilde{L} \hat{\otimes } \mhK[t].
		\end{align}
		Following the same line as in the proof of Lemma \ref{Lemma W14B2}, it is clear that
		\begin{align}
			\label{W14B166}
			\mu \meqd (\kappa_0 (t) + dt~ \kappa_1 (t)) + d_{\tilde{L}}^{\kappa_0 (t) + dt~ \kappa_1 (t)} ( dt ~\tilde{y}),
		\end{align}
		with $\tilde{y}$ as in Equation \eqref{W14B123}, defines a 1-cell in $\mmMC(\tilde{L})$ connecting $0$ with $U_\star (a_{p+1})$, too.\\
		Unravelling the notation and using $dt^2=0$, Equation \eqref{W14B166} amounts to (cf. Equation \eqref{W14B30})
		\begin{align*}
			\mu = \kappa_0 (t) + \Bigg( \kappa_1 (t) + d_{\tilde{L}} (\tilde{y}) + \ubr{  \{\hspace{-12pt}  \ubr{ \kappa_0 (t)}_{\minc{\eqref{W14B165}} \mmF_{p+1} \tilde{L} \hat{\otimes} \mhK[t]} , \ubr{ \tilde{y}}_{\minc{\eqref{W14B123}} \mmF_{p-r+1} \tilde{L}} \hspace{-10pt} \}_2 }_{\in \mmF_{p+1} \tilde{L} \hat{\otimes } \mhK[t]} + \text{higher filtration degrees} \Bigg) dt.
		\end{align*}
		By writing out $\kappa_1 (t)$ in the sense of Equation \eqref{W19A3}, we see that
		\begin{align*}
			\kappa_1 (t) + d_{\tilde{L}} (\tilde{y}) \meqc{\eqref{W19A3}} \ubr{\gamma_1^{(p)}- \psi (x) + d_{\tilde{L}} (\tilde{y}) }_{\meqc{\eqref{W14B125}} \mmF_{p+1} \tilde{L}} + \mcO( \mmF_{p+1} \tilde{L} \hat{\otimes} \mhK[t]) \in \mmF_{p+1} \tilde{L} \hat{\otimes } \mhK[t]
		\end{align*}
		and so it becomes immediate that the $dt$ coefficient of the 1-cell $\mu$ lies in $\mmF_{p+1} \tilde{L} \hat{\otimes} \mhK[t]$.\\
		Applying Lemma B2 from \cite{DolgushevLinfty} yields that there exists a rectified 1-cell $\gamma^{(p+1)}= \gamma_0^{(p+1)} (t) + dt~\gamma_1^{(p+1)}$ in $\mmMC(\tilde{L})$, which connects $0$ with $U_\star (a_{p+1})$ and satisfies $\gamma_1^{(p+1)} \in \mmF_{p+1} \tilde{L}$.\\
	\end{proof}
	\section{Higher Homotopy Groups}
	\label{Higher}
	\begin{theorem}[Isomorphism on higher Homotopy Groups]
		\label{Theorem W15D1}\hfill\\
		Let $L$ and $\tilde{L}$ be two $\mmS L_\infty$ algebras endowed with descending, bounded above and complete filtrations
		\begin{align*}
			\begin{aligned}
				L= \mmF_1 L \supset \mmF_2 L \supset \mmF_3 L \supset \ldots\\
				\tilde{L} = \mmF_1 \tilde{L} \supset \mmF_2 \tilde{L} \supset \mmF_3 \tilde{L} \supset \ldots
			\end{aligned}
		\end{align*}
		compatible with the $\mmS L_\infty$ algebra structures.\\
		Let $U:L \rightarrow \tilde{L}$ be an $\infty$-morphism of $\mmS L_\infty$ algebras compatible with the filtrations.\\
		Let $\psi$ be the linear term of $U$ as in Equation \eqref{W14B7} and let $U_\star$ be defined as in Equation \eqref{W14B8}.\\
		Let $\psi$ be a quasi-isomorphism on the r-1st page of the spectral sequences of the filtered complexes $(L,d_L)$ and $(\tilde{L},d_{\tilde{L}})$.\\
		Let us assume $H^0 ((\mmF_{2^q} L)/(\mmF_{\mmin(2^{q+1},r)} L))=0$ for every $q$ with $2^q < r$.\\
		Then for every Maurer-Cartan element $\tau \in \mMC(L)$ and for every $n \geq 1$, $\mmMC(U)$ induces a group isomorphism
		\begin{align}
			\label{W15D1}
			\pi_n (\mmMC(L), \tau) \stackrel{\cong}{\rightarrow} \pi_n (\mmMC(\tilde{L}), U_\star (\tau)).
		\end{align}
	\end{theorem}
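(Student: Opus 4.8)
The plan is to reduce the claim to the $\pi_0$-bijection of Theorem~\ref{Thm W14B1}, combining a twisting argument with the standard description of higher homotopy groups of $\mmMC(L)$ as connected components of an auxiliary Maurer--Cartan space. The first step is to move the base point into deep filtration: by Lemma~\ref{Lemma W19A1}, whose hypothesis is exactly the assumed vanishing $H^0((\mmF_{2^q}L)/(\mmF_{\mmin(2^{q+1},r)}L))=0$, there is a gauge equivalent $\tau'\in\mmF_r L$, and since $U_\star$ preserves gauge equivalences and gauge equivalent base points give isomorphic homotopy groups compatibly with $\mmMC(U)$, it suffices to treat $\tau'$. Twisting then replaces $(L,\tilde L,U,\tau')$ by $(L^{\tau'},\tilde L^{U_\star(\tau')},U^{\tau'},0)$ (cf.~\eqref{W15C1},~\eqref{W15C2}). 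Having arranged $\tau'\in\mmF_r L$ is what keeps this harmless: the twisting corrections to $d_L$, $d_{\tilde L}$ and to the linear part $\psi$ all raise filtration degree by at least $r$, so they do not affect the pages $E_0,\dots,E_{r-1}$ or the $(r-1)$st differentials of any of the three spectral sequences, whence $E_r(\mcone(\psi^{\tau'}))=E_r(\mcone(\psi))=0$; likewise they vanish on each quotient $(\mmF_{2^q}\cdot)/(\mmF_{\mmin(2^{q+1},r)}\cdot)$, so $L^{\tau'}$ still satisfies the cohomology hypothesis. Hence I may assume $\tau=0$ from now on.

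The second step identifies $\pi_n$ with a set of connected components. Let $\tilde\Omega_n\subset\Omega_n$ be the differential graded ideal of polynomial forms on $\Delta^n$ vanishing on $\partial\Delta^n$; it is a bounded complex of finite dimensional spaces with $H^\ast(\tilde\Omega_n)\cong\mhK$ concentrated in degree $n$ (the reduced cohomology of $\Delta^n/\partial\Delta^n\cong S^n$). An $n$-cell of $\mmMC(L)$ all of whose faces are the degenerate base point $0$ is precisely an element of $\mMC(L\hat{\otimes}\tilde\Omega_n)$, and --- using that $\mmMC(L)$ is Kan together with the rectification Lemma~B2 of \cite{DolgushevLinfty} --- the homotopy relation rel $\partial\Delta^n$ corresponds to gauge equivalence in $\mmMC(L\hat{\otimes}\tilde\Omega_n)$; thus $\pi_n(\mmMC(L),0)\cong\pi_0(\mmMC(L\hat{\otimes}\tilde\Omega_n))$, naturally in $U$ since $U_\star^{(n)}$ preserves the ideal $\tilde\Omega_n$. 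Here $L\hat{\otimes}\tilde\Omega_n$ and $\tilde L\hat{\otimes}\tilde\Omega_n$ carry the complete, bounded above, compatible filtrations $\mmF_p L\hat{\otimes}\tilde\Omega_n$ and $\mmF_p\tilde L\hat{\otimes}\tilde\Omega_n$, and $\mmMC(U)$ becomes $\mmMC(U\hat{\otimes}\mathrm{id}_{\tilde\Omega_n})$. As $\mmMC(U)$ is automatically a group homomorphism on $\pi_n$, it suffices to show $\mmMC(U\hat{\otimes}\mathrm{id}_{\tilde\Omega_n})$ is bijective on $\pi_0$.

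It then remains to put $U\hat{\otimes}\mathrm{id}_{\tilde\Omega_n}$ into the setting of Theorem~\ref{Thm W14B1}. The filtration hypotheses are immediate, and for the spectral sequence hypothesis one uses $\mcone(\psi\hat{\otimes}\mathrm{id})=\mcone(\psi)\hat{\otimes}\tilde\Omega_n$ together with the K\"unneth formula over the ground field (legitimate since $\tilde\Omega_n$ is bounded), giving $E_s(\mcone(\psi)\hat{\otimes}\tilde\Omega_n)\cong E_s(\mcone(\psi))\otimes H^\ast(\tilde\Omega_n)$ for $s\geq1$ and hence $E_r(\mcone(\psi\hat{\otimes}\mathrm{id}))=0$; equivalently, the concrete condition \eqref{W14B14a}--\eqref{W14B14} holds for $\mcone(\psi\hat{\otimes}\mathrm{id})$.

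The main obstacle is the cohomology hypotheses, and here I expect the real work to lie. Tensoring a quotient complex with $\tilde\Omega_n$ shifts cohomological degree by $n$, so $H^0$ of a quotient of $L\hat{\otimes}\tilde\Omega_n$ is $H^{-n}$ of the corresponding quotient of $L$, which is not directly controlled by the assumption $H^0((\mmF_{2^q}L)/(\mmF_{\mmin(2^{q+1},r)}L))=0$. Rather than cite Theorem~\ref{Thm W14B1} as a black box, one should re-run its two inductions for $U\hat{\otimes}\mathrm{id}$ and track precisely which cohomology groups are genuinely invoked when the base points are pinned at $0\in\mmF_r$. In particular one needs to show that the uses of Lemma~\ref{Lemma W19A1} and Lemma~\ref{Lemma W14Z1} inside that proof can here be replaced by arguments that only see the top-degree cohomology $H^n(\tilde\Omega_n)\cong\mhK$ paired against degree-$0$ data of $L$ --- which would simultaneously explain why no hypothesis on $\tilde L$ is imposed in Theorem~\ref{Theorem W15D1}, since the target base point $0$ never has to be moved. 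Once this is established, the (re-run) Theorem~\ref{Thm W14B1} gives the $\pi_0$-bijection for $U\hat{\otimes}\mathrm{id}_{\tilde\Omega_n}$, and unwinding the two reductions yields the group isomorphism $\pi_n(\mmMC(L),\tau)\stackrel{\cong}{\rightarrow}\pi_n(\mmMC(\tilde L),U_\star(\tau))$.
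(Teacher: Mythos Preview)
Your first step --- moving the base point into $\mmF_r L$ via Lemma~\ref{Lemma W19A1} and observing that twisting by such a $\tau$ does not disturb the $(r-1)$st page condition --- coincides with the paper's approach and with the content of Lemma~\ref{Lemma W15D1}. The divergence comes afterwards. The paper does \emph{not} reduce $\pi_n$ to a $\pi_0$ problem at all: instead it invokes Berglund's Theorem~5.5 from \cite{Berglund}, which gives a natural group isomorphism $\pi_n(\mmMC(L),\tau)\cong H^{-n+1}(L^\tau)$, and then shows that $H(\psi^\tau):H(L^\tau)\to H(\tilde L^{U_\star(\tau)})$ is an isomorphism by the Eilenberg--Moore Comparison Theorem (since $\psi^\tau$ is still a quasi-isomorphism on page $r-1$). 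This sidesteps entirely the cohomology hypotheses of Theorem~\ref{Thm W14B1} and explains at once why no condition on $\tilde L$ is needed.

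Your route via $\pi_n(\mmMC(L),0)\cong\pi_0(\mmMC(L\hat\otimes\tilde\Omega_n))$ is in principle sound, but as you yourself flag, it runs into a genuine obstruction: after tensoring with $\tilde\Omega_n$ the degree-$0$ vanishing hypotheses of Theorem~\ref{Thm W14B1} become degree-$(-n)$ conditions on $L$ that are simply not assumed, and the $\tilde L$-hypotheses of that theorem are absent here altogether. Saying ``one should re-run its two inductions'' is not a proof; it is precisely where the argument would have to begin, and you give no indication of how those uses of Lemma~\ref{Lemma W19A1} and Lemma~\ref{Lemma W14Z1} (both of which are genuinely invoked inside the proof of Theorem~\ref{Thm W14B1}, for both $L$ and $\tilde L$) can be circumvented. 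As written, the proposal stops at the point where the actual difficulty starts. The paper's use of Berglund plus Eilenberg--Moore is both shorter and complete; you should replace your second and third steps by that argument.
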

	As it turns out, by virtue of Theorem 5.5 from \cite{Berglund}, this is an immediate consequence of the following lemma.
	\begin{lemma}
		\label{Lemma W15D1}\hfill\\	
		Let the situation be as in Theorem \ref{Theorem W15D1}.\\
		Let $\tau \in \mMC(L)$ be a Maurer-Cartan element such that $\tau \in \mmF_r L$.\\
		Then the linear part ${\psi^\tau : L^\tau \rightarrow \tilde{L}^{U_\star (\tau)}}$ of the twisted $\infty$-morphism ${U^\tau :L^\tau \rightarrow \tilde{L}^{U_\star (\tau)}}$ induces an isomorphism on the level of co-homology
		\begin{align}
			H(\psi^\tau) : H(L^\tau) \stackrel{\cong}{\rightarrow} H(\tilde{L}^{U_\star (\tau)}).
		\end{align}
	\end{lemma}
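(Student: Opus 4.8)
The plan is to show that the twisted linear map $\psi^\tau$ is again a quasi-isomorphism on the $(r-1)$st page of the spectral sequences of the filtered complexes $(L^\tau,d_L^\tau)$ and $(\tilde{L}^{U_\star(\tau)},d_{\tilde{L}}^{U_\star(\tau)})$, i.e. that $E_r(\mcone(\psi^\tau))=0$, and then to upgrade this to $H(\mcone(\psi^\tau))=0$ by convergence of the cone's spectral sequence. The latter is exactly the assertion that $H(\psi^\tau)$ is an isomorphism.

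First I would record how the twist acts on filtration degrees. Since $\tau\in\mmF_r L$ and $U'$ is compatible with the filtrations (Equation \eqref{W14B6}), every summand of $U_\star(\tau)$ in Equation \eqref{W14B8} carries at least one factor $\tau$, whence $U_\star(\tau)\in\mmF_r\tilde{L}$. Inspecting the definition \eqref{W15C1} of the twisted brackets and the analogous twist of $U$ yielding $U^\tau$ (Equation \eqref{W15C2}), every correction term that the twist adds to $d_L$, to $d_{\tilde{L}}$ and to the linear part $\psi$ likewise contains at least one factor $\tau$ resp. $U_\star(\tau)$, so that for every $p$
\begin{align*}
(d_L^\tau-d_L)(\mmF_p L)&\subset\mmF_{p+r}L,\\
(d_{\tilde{L}}^{U_\star(\tau)}-d_{\tilde{L}})(\mmF_p\tilde{L})&\subset\mmF_{p+r}\tilde{L},\\
(\psi^\tau-\psi)(\mmF_p L)&\subset\mmF_{p+r}\tilde{L}.
\end{align*}
In particular $\psi^\tau$ is the linear part of the filtration-compatible $\infty$-morphism $U^\tau$, hence a filtration-preserving chain map between filtered complexes, so $\mcone(\psi^\tau)$ and its spectral sequence are defined.

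Next I would feed these estimates into the reformulation \eqref{W14B14a}--\eqref{W14B14} of the condition ``quasi-isomorphism on the $(r-1)$st page''. Its derivation in Section \ref{Preliminaries} only unravels the definitions of the mapping cone, its differential and the spectral sequence, so it applies verbatim to $\psi^\tau$ and the twisted complexes. It therefore suffices to verify, for every $p$, that if $a\in\mmF_p L$ and $b\in\mmF_p\tilde{L}$ satisfy $d_L^\tau(a)\in\mmF_{p+r}L$ and $\psi^\tau(a)-d_{\tilde{L}}^{U_\star(\tau)}(b)\in\mmF_{p+r}\tilde{L}$, then there are $x\in\mmF_{p-r+1}L$ and $y\in\mmF_{p-r+1}\tilde{L}$ satisfying the four membership conditions of \eqref{W14B14} with $d_L,\psi,d_{\tilde{L}}$ replaced by their twisted versions. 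By the degree estimates above, the hypotheses on $(a,b)$ imply the untwisted hypotheses \eqref{W14B14a} for $\psi$; the assumption on $\psi$ then produces $x,y$ in $\mmF_{p-r+1}$ satisfying \eqref{W14B14}, and the very same $x,y$ work for $\psi^\tau$, because the three extra terms $(d_L^\tau-d_L)(x)$, $(\psi^\tau-\psi)(x)$, $(d_{\tilde{L}}^{U_\star(\tau)}-d_{\tilde{L}})(y)$ all have filtration degree at least $(p-r+1)+r=p+1$ and hence perturb none of the four conditions. (Alternatively one may observe directly that a perturbation of the differential raising the filtration degree by at least $r$ changes neither the numerator nor the denominator in the explicit description of the $r$th page $E_r(\mcone(-))$ recalled in Section \ref{Preliminaries}, so $E_r(\mcone(\psi^\tau))\cong E_r(\mcone(\psi))=0$.) Either way, $E_r(\mcone(\psi^\tau))=0$.

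Finally, the filtration on $\mcone(\psi^\tau)=L^\tau\oplus\tilde{L}^{U_\star(\tau)}[-1]$ is inherited from those on $L$ and $\tilde{L}$, hence descending, bounded above and complete, so its spectral sequence converges; from $E_r(\mcone(\psi^\tau))=0$ we get $E_\infty(\mcone(\psi^\tau))=0$ and therefore $H(\mcone(\psi^\tau))=0$, which is equivalent to $H(\psi^\tau):H(L^\tau)\stackrel{\cong}{\rightarrow}H(\tilde{L}^{U_\star(\tau)})$. I do not anticipate a real obstacle: the substance is the filtration bookkeeping of the first step together with the fact that the criterion \eqref{W14B14a}--\eqref{W14B14} is insensitive to replacing a complex by a twist of it; the one point worth a sentence of justification is the passage $E_r=0\Rightarrow H=0$, which is exactly the convergence already invoked for the spectral sequences in Section \ref{Preliminaries}.
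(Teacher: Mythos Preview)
Your proof is correct and follows essentially the same route as the paper: both establish the filtration estimates $(d_L^\tau-d_L)(\mmF_p)\subset\mmF_{p+r}$, $(d_{\tilde{L}}^{U_\star(\tau)}-d_{\tilde{L}})(\mmF_p)\subset\mmF_{p+r}$, $(\psi^\tau-\psi)(\mmF_p)\subset\mmF_{p+r}$ and use them to transfer the criterion \eqref{W14B14a}--\eqref{W14B14} from $\psi$ to $\psi^\tau$, then pass from $E_r(\mcone(\psi^\tau))=0$ to $H(\psi^\tau)$ being an isomorphism. The only cosmetic difference is that the paper names this last step as the Eilenberg--Moore Comparison Theorem (Theorem 5.5.11 in \cite{Weibel}), whereas you argue convergence of the cone's spectral sequence directly; these are the same argument.
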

	\begin{proof}[\textbf{Proof of Lemma \ref{Lemma W15D1}}]
		\hfill\\
		We start by showing that the linear part of the twisted morphism also is a quasi-isomorphism on the r-1st page of the spectral sequences.\\
		As we have seen, it suffices to prove that also for $\psi$ replaced by $\psi^\tau$, Equation \eqref{W14B14a} still implies Equation \eqref{W14B14}.\\
		The composition $U^{\prime \tau}:S^+(L^\tau) \rightarrow \tilde{L}^{U_\star (\tau)}$ of the twisted morphism $U^\tau: L^\tau \rightarrow \tilde{L}^{U_\star (\tau)}$ (cf. Equation \eqref{W15C2}) with the projection (cf. Equation \eqref{W14B5}) is given by
		\begin{align*}
			U^{\prime \tau} (v_1, \ldots, v_n) = \sum_{k=0}^{\infty} \frac{1}{k!} U^\prime (\ubr{\tau, \ldots, \tau}_{\text{k-times}}, v_1, \ldots, v_n).
		\end{align*}
		The linear part $\psi^\tau$ is defined, analogously as in Equation \eqref{W14B7}, as the restriction $\psi^\tau \meqd U^{\prime \tau} \vert_{L^\tau}$ and so we find
		\begin{align}
			\label{W16J2}
			\psi^\tau (a) = \psi (a) + \sum_{k=1}^{\infty} \frac{1}{k!} U^\prime (\ubr{\tau, \ldots, \tau}_{\text{k-times}}, a).
		\end{align}
		Due to Equation \eqref{W15C1}, the twisted differential $d_L^\tau$, i.e. the differential on $L^\tau$, can be rewritten as
		\begin{align*}
			d_L^\tau (a) = d_L (a) + \sum_{k=1}^{\infty} \frac{1}{k!} \{\ubr{ \tau, \ldots, \tau}_{\text{k-times}},a  \}_{k+1},
		\end{align*}
		and the similar holds for $d_{\tilde{L}}^{U_\star (\tau)}$, too.\\
		We also notice that from having the same underlying filtered and graded vector spaces, the filtrations are the same on the twisted and untwisted $\mmS L_\infty$ algebras, hence $a \in \mmF_p L$ is equivalent to $a \in \mmF_p L^\tau$.\\
		Due to Equation \eqref{W16J2}, it is immediate, that for $q \in \mmF_p L^\tau$ 
		\begin{align*}
			\psi^\tau (q) = \psi (q)~\mmod~\mmF_{p+r} \tilde{L}
		\end{align*}
		holds.\\
		Similarly, $s \in \mmF_p L^\tau$ and $t \in \mmF_p \tilde{L}^{U_\star (\tau)}$ imply
		\begin{align*}
			d_{L}^\tau (s) = d_L (s) ~\mmod~\mmF_{p+r} L
		\end{align*}
		and
		\begin{align*}
			d_{\tilde{L}}^{U_\star (\tau)} (t) = d_{\tilde{L}} (t)~\mmod~\mmF_{p+r} \tilde{L},
		\end{align*}
		respectively.\\
		As a result of that, we can safely replace all the expressions in Equations \eqref{W14B14a} and \eqref{W14B14} with its twisted analogues and hence $\psi^\tau$ is a quasi-isomorphism on the r-1st page, too.\\
		Moreover, according to the Eilenberg-Moore Comparison Theorem (see Theorem 5.5.11 from \cite{Weibel}), $\psi^\tau$ induces an isomorphism
		\begin{align}
			\label{W16J11}
			H( \psi^\tau ): H(L^\tau) \stackrel{\cong}{\rightarrow} H(\tilde{L}^{U_\star (\tau)})
		\end{align}
		in co-homology.
	\end{proof}
	\begin{proof}[\textbf{Proof of Theorem \ref{Theorem W15D1}}]
		\hfill\\
		According to Lemma \ref{Lemma W19A1}, for every Maurer-Cartan element $\tau \in \mMC(L)$ there exists a Maurer-Cartan element $\tilde{\tau} \in \mMC(L)$, such that $\tilde{\tau} \in \mmF_r L$ and $\tau$ is gauge-equivalent to $\tilde{\tau}$.\\
		In the language of homotopy, $\tau$ and $\tilde{\tau}$ being gauge equivalent (with equivalence induced by 1-cells in $\mmMC(L)$) means $\tau$ and $\tilde{\tau}$ lie in the same connected component.\\
		Therefore, we can w.l.o.g. assume $\tau \in \mmF_r L$.\\
		According to Theorem 5.5 from \cite{Berglund}, for every $n \geq 1$ there is a group isomorphism
		\begin{align}
			\label{W16J12}
			B_{n-1}^\tau : H^{-n+1}(L^\tau) \stackrel{\cong}{\rightarrow} \pi_n (\mmMC(L),\tau).
		\end{align}
		Here, we use superscripts for indicating that we work in co-homology.\\
		In addition, the following diagram commutes:
		\[
		\begin{tikzcd}
			H^{-n+1} (L^\tau) \arrow[rr, "B_{n-1}^\tau"] \arrow[dd, "H^{-n+1} (\psi^\tau)"] &  & {\pi_n (\mmMC(L),\tau)} \arrow[dd, "{\mmMC(U)}"] \\
			&  &                                                       \\
			H^{-n+1} (\tilde{L}^{U_\star (\tau)}) \arrow[rr, "B_{n-1}^{U_\star (\tau)}"]    &  & {\pi_n (\mmMC(\tilde{L}), U_\star (\tau))}           
		\end{tikzcd}
		\]
		As a result of that, we are allowed to apply Lemma \ref{Lemma W15D1} and deduce the vertical left arrow to be a group isomorphism. Additionally, from the fact that in the above diagram both horizontal arrows are indeed group isomorphisms, clearly the right vertical arrow $\mmMC(U)$ is a group isomorphism as well.
	\end{proof}
	\newpage
	\appendix
	\section{Proof of Theorem \ref{Theorem ZA1}}
	\label{Zusatzfrage}
	In the verification of Theorem \ref{Theorem ZA1}, we use similar steps as in Sections \ref{Bijection} and \ref{Higher}. As before, we present the proof in the setting of $\mmS L_\infty$ algebras.\\
	\begin{theorem}[Theorem \ref{Theorem ZA1}]
		\label{Theorem ZA2}\hfill\\
		Let $L$ and $\tilde{L}$ be two $\mmS L_\infty$ algebras equipped with descending, bounded above and complete filtrations
		\begin{align*}
			\begin{aligned}
				L= \mmF_0 L \supset \mmF_1 L \supset \mmF_2 L \supset \ldots\\
				\tilde{L}= \mmF_1 \tilde{L} \supset \mmF_2 \tilde{L} \supset \mmF_3 \tilde{L} \supset \ldots
			\end{aligned}
		\end{align*}
		compatible with the $\mmS L_\infty$ structures. In addition, let $L$ be Abelian.\\
		Let $U:L \rightarrow \tilde{L}$ be an $\infty$-morphism of $\mmS L_\infty$ algebras compatible with the filtrations that has its linear part, say $\psi$, being a quasi-isomorphism on the r-1st page of the spectral sequences induced by the filtered complexes $(L,d_L)$ and $(\tilde{L}, d_{\tilde{L}})$ for some arbitrary fixed $r$. Moreover, let us assume $U$ to raise the degree of filtration by $r-1$\footnote{See Equation \eqref{Raise}} and $U_\star$ to be a finite sum\footnote{See Equation \eqref{W14B8}. This is equivalent to demanding that for every $a \in L$ there exists a $N \in \mathbb{N}$ such that for $U^\prime (\ubr{a, \ldots, a}_{\text{n-times}})=0$ for all $n \geq N$.}.\\
		Let also $H^0((\mmF_{2^q} \tilde{L})/(\mmF_{\mmin (2^{q+1},r)} \tilde{L}))=0$ hold for every $q$ with $2^q < r$.\\
		Then $U$ induces a weak homotopy equivalence of simplicial sets:
			\begin{align*}
				\mmMC(U):\mmMC(L) \rightarrow \mmMC(\tilde{L}).
		\end{align*}
	\end{theorem}
	\begin{proof}[Proof of Theorem \ref{Theorem ZA2}]
		\hfill\\	
		We start by proving the bijection on the connected components, where we show surjectivity and injectivity separately.
		\subsection{Surjectivity}\hfill\\
		\label{Appendix Surjectivity}We prove by induction on $p$ the following statement, from which surjectivity directly follows:\\
		\underline{Statement:}\\
		Let $b\in \mMC(\tilde{L})$ be arbitrary.\\
		Then there exists a sequence $\{ a_p \}_{p \geq r+1}$ of degree $0$ elements in $L$, a sequence $\{b_p\}_{p \geq r+1}$ of Maurer-Cartan elements in $\mMC(\tilde{L})$ and a sequence $\{ \gamma^{(p)} \}_{p \geq r+2}$ of rectified 1-cells in $\mmMC(\tilde{L})$, such that
		\begin{enumerate}
			\item $b_{r+1} \sim b$.
			\item $a_p \in \mmF_1 L$ and $a_{p} - a_{p-1} \in \mmF_{p-r} L$.
			\item $\mcurv(a_p) \in \mmF_p L$.
			\item $ b_p \in \mmF_r \tilde{L}$ and $\gamma^{(p)} = \gamma_0^{(p)} (t) + dt~\gamma_1^{(p)}$ satisfies $\gamma_0^{(p)}(0) = b_{p-1}$ and $\gamma_0^{(p)} (1) = b_p$ (thus $b_{p} \sim b_{p-1}$), as well as $\gamma_1^{(p)} \in \mmF_{p-r} \tilde{L}$.\\
			In addition, $b_p- b_{p-1} \in \mmF_{p-1} \tilde{L}$ holds.
			\item $U_\star (a_p) = b_p~\mmod ~\mmF_p \tilde{L}$.
		\end{enumerate}
		Notice the adjustment made in Point 2 when compared to the statement from Section \ref{Bijection}.\\
		\underline{Proof of Statement:}\hfill\\
		\underline{Base of induction:}\hfill\\
		\underline{Constructing $a_{r+1}$:}\hfill\\
		Because of Lemma \ref{Lemma W19A1}, there exists a Maurer-Cartan element $b_r \in \mMC(\tilde{L})$ of filtration degree $b_r \in \mmF_r \tilde{L}$, which is gauge equivalent to $b$.\\
		From $b_r$ being a Maurer-Cartan element it particularly satisfies $d_{\tilde{L}} (b_r)\in \mmF_{r+r} \tilde{L}$. Hence, all the requirements of Equation \eqref{W14B14a} in the case of $p=r$, $b_r$ in the role of $b$ and $a$ set to zero, are satisfied.\\
		Thus, Equation \eqref{W14B14} yields:
		\begin{align}
			\label{W20B1}
			&\left\{
			\begin{aligned}
				\exists a_{r+1} &\in \mmF_1 L\\
				\exists y &\in \mmF_1 \tilde{L}
			\end{aligned}
			\right.\\
			\nonu \mst\\
			\label{W20B2a}
			&\left\{
			\begin{aligned}
				d_L (a_{r+1}) &\in \mmF_r L\\
				\psi (a_{r+1} ) - d_{\tilde{L}} (y) &\in \mmF_r \tilde{L}
			\end{aligned}
			\right.\\	
			\nonumber \mand\\
			\label{W20B2}
			&\left\{
			\begin{aligned}
				d_L (a_{r+1}) &\in \mmF_{r+1}	L\\
				b_{r} - \psi (a_{r+1}) + d_{\tilde{L}} (y) &\in \mmF_{r+1} \tilde{L}.
			\end{aligned}
			\right.	
		\end{align}
		But then, using the definition of $\mcurv$, a simple computation reveals
		\begin{align*}
			\mcurv(a_{r+1}) = \ubr{d_L (a_{r+1})}_{\minc{\eqref{W20B2}} \mmF_{r+1} L} + \ubr{\sum_{m=2}^{\infty} \frac{1}{m!} \{a_{r+1}, \ldots,a_{r+1} \}_m}_{= 0},
		\end{align*}
		i.e.
		\begin{align}
			\label{W20B3}
			\mcurv (a_{r+1}) \in \mmF_{r+1} L.
		\end{align}
		\underline{Constructing $b_{r+1}$:}\hfill\\
		Since $U$ raises the degree of filtration by $r-1$, we know
		\begin{align}
			\label{W20B3b}
			\psi (a_{r+1}) \in \mmF_{r} \tilde{L}.
		\end{align}
		Together with Equation \eqref{W20B2a} this leads to
		\begin{align}
			\label{W20B5}
			d_{\tilde{L}} (y) \in \mmF_r \tilde{L}.
		\end{align}
		We continue by constructing a rectified 1-cell $\gamma = \gamma_0 (t) + dt~\gamma_1$ in $\mmMC(\tilde{L})$ in the usual manner by setting:
		\begin{align}
			\label{W20B9}
			\begin{aligned}
				\gamma_0 (0) &\meqd b_r\\
				\gamma_1 &\meqd y
			\end{aligned}	
		\end{align}
		According to Lemma \ref{Lemma W14B1}, $\gamma_0 (t)$ is found to be
		\begin{align*}
			\gamma_0 (t) = \ubr{b_r}_{\in \mmF_r \tilde{L}} + \int_{0}^{t} dt_1 \Big( \ubr{d_{\tilde{L}} (y)}_{\minc{\eqref{W20B5}} \mmF_r \tilde{L}} + \sum_{k=1}^{\infty} \frac{1}{k!} \{ \gamma_0 (t_1), \ldots, \gamma_0 (t_1), y \}_{k+1} \Big),
		\end{align*}
		so particularly
		\begin{align}
			\label{W20B10}
			\gamma_0 (t) \in \mmF_r \tilde{L} \hat{\otimes} \mhK[t]
		\end{align}
		holds.\\
		We set $b_{r+1}$ to be the endpoint of this 1-cell, i.e.
		\begin{align}
			\label{W20B11}
			b_{r+1} \meqd \gamma_0 (1) \minc{\eqref{W20B10}} \mmF_r \tilde{L}
		\end{align}
		and so we find
		\begin{align}
			\label{W20B12}
			\begin{aligned}
				b_{r+1} &= b_r + \int_{0}^{1} dt \Big( d_{\tilde{L}} (y) + \ubr{\sum_{k=1}^{\infty} \frac{1}{k!} \{ \hspace{-9pt} \ubr{\gamma_0 (t)}_{\minc{\eqref{W20B10}}\mmF_r \tilde{L} \hat{\otimes} \mhK[t]} \hspace{-9pt}, \ldots, \gamma_0 (t), y \}_{k+1}}_{\in \mmF_{r+1} \tilde{L} \hat{\otimes} \mhK[t]} \Big)\\[-6pt]
				&=b_r + d_{\tilde{L}} (y) + \mcO(\mmF_{r+1} \tilde{L}).
			\end{aligned}
		\end{align}
		\underline{Proving $U_\star (a_{r+1})=b_{r+1}~\mmod~ \mmF_{r+1} \tilde{L}$:}\hfill\\
		From the fact that $U$ raises the degree of filtration by $r-1$, we recognise
		\begin{align}
			\label{W20B13}
			U_\star (a_{r+1}) = \psi (a_{r+1}) + \mcO (\mmF_{r+1} \tilde{L}).
		\end{align}
		But then, using both Equations \eqref{W20B12} and \eqref{W20B13}, we find
		\begin{align}
			\label{W20B14}
			\begin{aligned}
				U_\star (a_{r+1}) - b_{r+1} &\meqc{\substack{\eqref{W20B12}\\\eqref{W20B13}}} \Big( \psi (a_{r+1}) + \mcO (\mmF_{r+1} \tilde{L}) \Big) - \Big( b_r + d_{\tilde{L}} (y) + \mcO(\mmF_{r+1} \tilde{L} ) \Big)\\[-0.1cm]
				&\hspace{8pt}=-\Big( \ubr{b_r - \psi (a_{r+1})+ d_{\tilde{L}} (y)}_{\minc{\eqref{W20B2}}  \mmF_{r+1} \tilde{L}} \Big) + \mcO(\mmF_{r+1} \tilde{L}) \in \mmF_{r+1} \tilde{L}.
			\end{aligned}
		\end{align}\hfill\\[-0.8cm]
		\underline{Induction Step:}\hfill\\
		\underline{Preparation:}\hfill\\
		We start by setting
		\begin{align}
			\label{W20B15}
			\tilde{b} \meqd U_\star (a_p) - b_p \in \mmF_p \tilde{L},
		\end{align}
		which holds due to assumption.\\
		Next, we know
		\begin{align*}
			d_L (\mcurv(a_p)) =0
		\end{align*}\hfill\\[-0.4cm]
		from $L$ being Abelian, thus
		\begin{align}
			\label{W20B16}
			d_L (\mcurv (a_p)) \in \mmF_{p+r} L
		\end{align}
		holds, trivially.\\
		Furthermore, we may compute
		\begin{align*}
			\begin{aligned}
				\psi (\mcurv (a_p))  &\meqc{\text{(2.18) from \cite{DolgushevEnhancement}}} \mcurv (U_\star (a_p)) - \ubr{\sum_{m=1}^{\infty} \frac{1}{m!} U^\prime (\ubr{a_p, \ldots, a_p}_{\text{m times}}, \ubr{\mcurv(a_p)}_{\in \mmF_p L})}_{\in \mmF_{p+r}\tilde{L}} \\
				&\hspace{20pt}=\mcurv (U_\star (a_p) ) + \mcO(\mmF_{p+r} \tilde{L}) \meqc{\eqref{W20B15} } \mcurv (\ubr{\tilde{b}}_{\minc{\eqref{W20B15}} \mmF_p \tilde{L}} + b_p) + \mcO(\mmF_{p+r} \tilde{L})\\[-0.1cm]
				&\meqc{\text{(2.20) from \cite{DolgushevEnhancement}}}\ubr{\mcurv(b_p)}_{\meqc{b_p \in \mMC(\tilde{L})} 0} + d_{\tilde{L}}^{b_p} (\tilde{b}) + \sum_{m=2}^{\infty} \frac{1}{m!} \{ \tilde{b}, \ldots, \tilde{b}\}_m^{b_p} + \mcO(\mmF_{p+r} \tilde{L}) \\
				&\hspace{20pt}=d_{\tilde{L}} (\tilde{b}) + \ubr{\sum_{k=1}^{\infty} \frac{1}{k!} \{ \ubr{b_p}_{\in \mmF_r \tilde{L}} , \ldots, b_p, \ubr{\tilde{b}}_{\minc{ \eqref{W20B15}} \mmF_p \tilde{L}}  \}_{k+1}}_{\in \mmF_{p+r} \tilde{L}}\\[-0.2cm]
				&\hspace{20pt}+\ubr{\sum_{m=2}^{\infty} \frac{1}{m!} \sum_{k=0}^{\infty} \frac{1}{k!} \{\ubr{b_p, \ldots, b_p}_{\text{k-times}} , \ubr{\tilde{b}}_{\minc{\eqref{W20B15}} \mmF_p \tilde{L}} , \ldots, \tilde{b} \}_{k+m}}_{\in \mmF_{2p} \tilde{L} \stackrel{p \geq r+1}{\subset} \mmF_{p+r} \tilde{L}} + \mcO(\mmF_{p+r} \tilde{L}),
			\end{aligned}	
		\end{align*}
		i.e. 
		\begin{align}
			\label{W20B17}
			\psi (\mcurv (a_p)) - d_{\tilde{L}} (\tilde{b}) \in \mmF_{p+r} \tilde{L}
		\end{align}
		holds.\\
		Due to Equations \eqref{W20B16} and \eqref{W20B17}, all the requirements of Equation \eqref{W14B14a}, in the setting of $p=p$, $\mcurv(a_p)$ in the role of $a$ and $\tilde{b}$ playing the role of $b$, are satisfied.\\
		Thus, Equation \eqref{W14B14} implies
		\begin{align}
			\label{W20B18}
			&\left\{
			\begin{aligned}
				\exists \tilde{a} &\in \mmF_{p-r+1} L\\
				\exists y &\in \mmF_{p-r+1} \tilde{L}
			\end{aligned}
			\right.\\
			\nonu \mst\\
			\label{W20B19}
			&\left\{
			\begin{aligned}
				d_L (\tilde{a}) &\in \mmF_p L\\
				\psi (\tilde{a} ) - d_{\tilde{L}} (y) &\in \mmF_p \tilde{L}
			\end{aligned}
			\right.\\	
			\nonumber \mand\\
			\label{W20B20}
			&\left\{
			\begin{aligned}
				\mcurv( a_p) - d_L (\tilde{a}) &\in \mmF_{p+1}	L\\
				\tilde{b} - \psi (\tilde{a}) + d_{\tilde{L}} (y) &\in \mmF_{p+1} \tilde{L}.
			\end{aligned}
			\right.	
		\end{align}
		\underline{Constructing $a_{p+1}$:}\hfill\\
		Differently than in Section \ref{Bijection} we do not need to distinguish the two cases but can rather use the same approach for both.\\
		We set
		\begin{align}
			\label{W20B21}
			a_{p+1} \meqd a_p - \tilde{a}.
		\end{align}
		A small calculation shows
		\begin{align*}
			\begin{aligned}
				&\mcurv (a_{p+1})  \meqc{\eqref{W20B21}} \mcurv (a_p - \tilde{a}) \meqc{\text{(2.20) from \cite{DolgushevEnhancement}}} \mcurv(a_p) + d_L^{a_p} (- \tilde{a}) + \sum_{m=2}^{\infty} \frac{1}{m!} \{ -\tilde{a}, \ldots, - \tilde{a}  \}_m^{a_p} \\
				&=\ubr{\mcurv(a_p) - d_L (\tilde{a})}_{\minc{\eqref{W20B20}} \mmF_{p+1} L} + \ubr{ \sum_{k=1}^{\infty} \frac{1}{k!} \{a_p, \ldots, a_p , - \tilde{a}\}_{k+1} }_{= 0} + \ubr{\sum_{m=2}^{\infty} \frac{1}{m!} \sum_{k=0}^{\infty} \frac{1}{k!} \{a_p, \ldots, a_p, - \tilde{a}, \ldots, - \tilde{a}\}_{k+m}}_{= 0},
			\end{aligned}
		\end{align*}
		i.e.
		\begin{align}
			\label{W20B22}	
			\mcurv(a_{p+1}) \in \mmF_{p+1} L.
		\end{align}
		\underline{Constructing $b_{p+1}$:}\hfill\\
		Using Equation \eqref{W20B18} and the fact that $U$ raises the degree of filtration by $r-1$ yields
		\begin{align*}
			\psi ( \tilde{a}) \in \mmF_{p} \tilde{L}.
		\end{align*}
		Plugging this into Equation \eqref{W20B19} results in
		\begin{align}
			\label{W20B29}
			d_{\tilde{L}} (y) \in \mmF_p \tilde{L}.
		\end{align}
		Let $\gamma^{(p+1)} = \gamma_0^{(p+1)} (t) + dt~\gamma_1^{(p+1)}$ be the rectified 1-cell in $\mmMC(\tilde{L})$ determined by setting
		\begin{align*}
			\begin{aligned}
				\gamma_1^{(p+1)} \meqd -y\\
				\gamma_0^{(p+1)} (0) \meqd b_p.
			\end{aligned}
		\end{align*}
		According to Lemma \ref{Lemma W14B1}, $\gamma_0^{(p+1)} (t)$ is then given by
		\begin{align}
			\label{W20B30}
			\gamma_0^{(p+1)} (t) =\hspace{-4pt} \ubr{b_p}_{\in \mmF_r \tilde{L}} \hspace{-5pt} + \hspace{-2pt} \int_{0}^{t} \hspace{-4pt} dt_1 \Big( \hspace{-2pt} \ubr{d_{\tilde{L}} (-y)}_{\minc{\eqref{W20B29}} \mmF_p \tilde{L}}  + \sum_{m=1}^{\infty} \frac{1}{m!} \{ \gamma_0^{(p+1)} (t_1), \ldots, \gamma_0^{(p+1)} (t_1), -y  \}_{m+1} \Big).
		\end{align}
		Hence, we may deduce in the usual manner
		\begin{align}
			\label{W20B30b}
			\gamma_0^{(p+1)} (t) \in \mmF_r \tilde{L} \hat{\otimes} \mhK[t].
		\end{align}
		Next, we set $b_{p+1}$ to be the endpoint of $\gamma^{(p+1)}$.\\
		From Equation \eqref{W20B30b} it is immediate that
		\begin{align}
			\label{W20B30c}
			b_{p+1} \in \mmF_r \tilde{L}.
		\end{align}
		Furthermore, a short calculation also shows
		\begin{align*}
			b_{p+1} \meqd \gamma_0^{(p+1)} (1) = b_p - d_{\tilde{L}} (y) + \ubr{\int_{0}^{1} dt ~ \sum_{m=1}^{\infty} \frac{1}{m!} \{ \ubr{\gamma_0^{(p+1)} (t)}_{\minc{\eqref{W20B30b}} \mmF_r \tilde{L}\hat{\otimes} \mhK[t]} , \ldots, \gamma_0^{(p+1)} (t),  \hspace{-10pt}\ubr{-y}_{\minc{\eqref{W20B18}} \mmF_{p-r+1} \tilde{L}} \hspace{-5pt}   \}_{m+1}}_{\in \mmF_{p+1} \tilde{L}} ,
		\end{align*}
		i.e.
		\begin{align}
			\label{W20B26}
			b_{p+1}= b_p - d_{\tilde{L}} (y) + \mcO (\mmF_{p+1} \tilde{L}).
		\end{align}
		\underline{Proving $U_\star (a_{p+1})=b_{p+1}~\mmod ~\mmF_{p+1} \tilde{L}$:}\\
		The difference between $U_\star (a_{p+1})$ and $b_{p+1}$ is found to be
		\begin{align*}
			\begin{aligned}
				&U_\star (a_{p+1}) - b_{p+1} \\
				&\hspace{-8pt}\meqc{\substack{\eqref{W20B21} \\ \eqref{W20B26}}}\bigg( \psi(a_p) - \psi (\tilde{a}) + \sum_{m=2}^{\infty} \frac{1}{m!} U^\prime \Big( (a_p - \tilde{a}), \ldots, (a_p - \tilde{a}) \Big) \bigg) - \Big( b_p - d_{\tilde{L}} (y) + \mcO(\mmF_{p+1} \tilde{L}) \Big)\\
				&=\ubr{- \psi(\tilde{a}) + \ubr{U_\star (a_p) - b_p}_{\meqc{\eqref{W20B15}} \tilde{b}} + d_{\tilde{L}} (y)}_{\minc{\eqref{W20B20}} \mmF_{p+1} \tilde{L}} + \bigg( \sum_{m=2}^{\infty} \frac{1}{m!} U^\prime \Big( (a_p- \tilde{a}), \ldots, (a_p - \tilde{a}) \Big) + \psi (a_p)- U_\star (a_p) \bigg) + \mcO (\mmF_{p+1} \tilde{L})\\
				&=\sum_{m=2}^{\infty} \frac{1}{m!} U^\prime \Big( (a_p- \tilde{a}), \ldots, (a_p - \tilde{a}) \Big) - \sum_{m=2}^{\infty} \frac{1}{m!} U^\prime \Big( a_p, \ldots, a_p \Big)+ \mcO (\mmF_{p+1} \tilde{L} ).
			\end{aligned}
		\end{align*}
		It remains to verify that the difference of the two series has filtration degree $\mmF_{p+1} \tilde{L}$. This can be done order by order.\\
		For e.g. $m=2$ the approach is:
		\begin{align*}
			\begin{aligned}
				&U^\prime \big( (a_p - \tilde{a}) , (a_p - \tilde{a}) \big) - U^\prime (a_p, a_p) \\
				&=U^\prime \big( (a_p- \tilde{a}), (a_p - \tilde{a}) \big) - U^\prime (a_p - \tilde{a}+ \tilde{a} , a_p)\\
				&=U^\prime \big( (a_p- \tilde{a}), (a_p - \tilde{a}) \big) - U^\prime \big( (a_p - \tilde{a}) , a_p \big) - \ubr{U^\prime (\hspace{-7pt}\ubr{\tilde{a}}_{\minc{\eqref{W20B18}} \mmF_{p-r+1} L} \hspace{-7pt}, a_p )}_{\in \mmF_{p+1} \tilde{L}} \\
				&=U^\prime \big( (a_p- \tilde{a}), (a_p - \tilde{a}) \big)- U^\prime \big( (a_p - \tilde{a}), (a_p - \tilde{a} + \tilde{a}) \big) + \mcO (\mmF_{p+1} \tilde{L})\\
				&=\ubr{ U^\prime \big( (a_p- \tilde{a}), (a_p - \tilde{a}) \big)- U^\prime \big( (a_p- \tilde{a}), (a_p - \tilde{a}) \big)}_{=0} - \ubr{ U^\prime \big( (a_p - \tilde{a}), \tilde{a} \big) }_{\in \mmF_{p+1} \tilde{L}} + \mcO(\mmF_{p+1} \tilde{L} ) \in \mmF_{p+1} \tilde{L},
			\end{aligned}
		\end{align*}
		where in the third and the fifth line we used the fact that $U$ raises the degree of filtration by $r-1$.\\
		Hence,
		\begin{align}
			\label{W20B28}
			U_\star (a_{p+1}) = b_{p+1}~\mmod~\mmF_{p+1} \tilde{L}
		\end{align}
		holds, indeed.
		\subsection{Injectivity}\hfill\\
		Similar to Subsection \ref{Injectivity Standard-Case}, we want to prove injectivity by explicitly showing the implication `$U_\star(a) \sim 0$ implies $a\sim0$' (instead of `$U_\star (a) \sim U_\star (b)$ implies $a \sim b$').\\
		However, since Lemma \ref{Lemma W19A1} is not applicable anymore, we can no longer without loss of generality assume $b \in \mmF_r L$, some additional steps have to be taken to ensure that in the twisted setting all the initial assumptions are still satisfied and as such, by abuse of notation, allow discarding the twists further in the proof.\\
		More precisely, we start with $U_\star (a) \sim U_\star (b)$. According to our previous statement about surjectivity, there exists a $z \in \mmF_1 L$ such that $U_\star(z) \sim U_\star(a) $. The main feature of this construction is $z$ having filtration degree $\mmF_1 L$.\\
		However, the initial conditions are invariant under twists with filtration degree $\mmF_1 L$ elements (i.e. looking at $U^{z}: L^z \to \tilde{L}^{U_{\star} (z)}$): Both, $U^{z}$ raising degree of filtration by $r-1$ and $L^z$ being Abelian are obvious. From $U$ increasing the degree of filtration by $r-1$ and $z \in \mmF_1 L$, we get $U_\star (z) \in \mmF_r \tilde{L}$ and consequently the conditions on first cohomologies also hold in the twisted case. As we further discuss in Subsection \ref{2nd Case Higher}, the linear part $\psi^z$ of the twisted $\infty$-morphism $U^z: L^z \to \tilde{L}^{U_\star (z)}$ also is a quasi-isomorphism on the $r-1$st page of the spectral sequence, as long as we twist with elements of at least filtration degree 1.\\
		Therefore, with regard to `$U_\star (a) \sim U_\star(z) \implies a \sim z$', we can still follow the ansatz from Subsection \ref{Injectivity Standard-Case}, i.e. rather than showing implication `$U_\star(a) \sim U_\star (z)$ implies $a \sim z$' we prove `$U_\star(a)\sim 0$ implies $a \sim 0$'\footnote{As in Subsection \ref{Injectivity Standard-Case} we omit writing the twists in to ease the notation.\\Moreover, replacing $a-z$ with $a$ in the argument is permissible, since we do not make any further assumption about $a$ despite it being a Maurer-Cartan element.}.\\
		In fact, showing `$U_\star(a)\sim 0$ implies $a \sim 0$' is sufficient for injectivity, as we can apply the steps from above with the same intermediate element $z$ for both $a$ and $b$, leading to `$U_\star (a) \sim U_\star (b)$ implies $a \sim z \sim b$' and use that the gauge-equivalence $\sim$ is indeed an equivalence relation.\\
		By induction on $p$, we obtain the following statement, from which `$U_\star (a)\sim 0$ implies $a \sim 0$' directly follows:\\
		\underline{Statement:}\hfill\\
		Let $a\in \mMC(L)$ be an arbitrary Maurer-Cartan element which satisfies $U_\star (a) \sim 0$.\\
		Then there exists a sequence $\{a_p\}_{p \geq 1}$ of Maurer-Cartan elements in $\mMC(L)$ and sequences $\{\xi^{(p)}\}_{p \geq 2}$ and $\{\gamma^{(p)}\}_{p \geq 1}$ of rectified 1-cells in $\mmMC(L)$ and $\mmMC(\tilde{L})$, respectively, such that
		\begin{enumerate}
			\item $a_1 \sim a$.
			\item $a_p \in \mmF_p L$.
			\item $\xi^{(p)}= \xi_0^{(p)} (t)+ dt~\xi_1^{(p)}$ satisfies $\xi_0^{(p)} (0) = a_{p-1}$ and $\xi_0^{(p)} (1) = a_p$ (thus $a_p \sim a_{p-1}$), as well as $\xi_1^{(p)} \in \mmF_{p-r} L$.
			\item $\gamma^{(p)}= \gamma_0^{(p)} (t) + dt~\gamma_1^{(p)}$ satisfies $\gamma_0^{(p)} (0)=0$ and $\gamma_0^{(p)} (1)= U_\star (a_p)$ (thus $0 \sim U_\star (a_p)$).\\
			In addition, $\gamma_1^{(p)} \in \mmF_p \tilde{L}$ holds.
		\end{enumerate}\hfill\\
		Notice that here the statement starts at $p=1$ rather than $p=r$ as in Section \ref{Bijection}.\newpage
		\noindent\underline{Proof of Statement:}\hfill\\
		\underline{Base of Induction:}\hfill\\
		By assumption we have $U_\star (a) \sim 0$, i.e. there exists a (due to Lemma B2 from \cite{DolgushevLinfty} w.l.o.g. rectified) 1-cell $\gamma= \gamma_0 (t) + dt~\gamma_1$ in $\mmMC(\tilde{L})$ connecting $U_\star (a)$ and $0$.\\
		According to Lemma \ref{Lemma W14B1}, this 1-cell is supposed to satisfy
		\begin{align}
			\label{DA1}
			\gamma_0 (t)= 0 + t d_{\tilde{L}} (\gamma_1) + \int_{0}^{t} dt_1~\sum_{m=1}^{\infty} \frac{1}{m!} \{ \gamma_0 (t_1), \ldots, \gamma_0 (t_1), \gamma_1 \}_{m+1}
		\end{align} 
		and
		\begin{align}
			\label{DA2}
			\ubr{U_\star (a)}_{\in \mmF_{r-1} \tilde{L}} = \gamma_0 (1) = 0 + d_{\tilde{L}} (\gamma_1) + \int_{0}^{1} dt~\sum_{m=1}^{\infty} \frac{1}{m!} \{ \gamma_0(t), \ldots, \gamma_0 (t), \gamma_1 \}_{m+1},
		\end{align}
		where we used the fact that $U$ raises the degree of filtration by $r-1$.\\
		In the usual manner, this implies
		\begin{align}
			\label{DA3}
			\gamma_0 (t) \in \mmF_{r-1} \tilde{L} \hat{\otimes } \mhK[t]
		\end{align}
		and
		\begin{align}
			\label{DA4}
			d_{\tilde{L}} (\gamma_1 ) \in \mmF_{r-1} \tilde{L}
		\end{align}
		to hold.\\
		Inserting Equation \eqref{DA3} into Equation \eqref{DA2} directly yields
		\begin{align}
			\label{DA5}
			U_\star (a) - d_{\tilde{L}} ( \gamma_1) = \int_{0}^{1} dt ~\sum_{m=1}^{\infty} \frac{1}{m!} \ubr{\{ \hspace{-10pt}\ubr{\gamma_0 (t)}_{\minc{\eqref{DA3}} \mmF_{r-1} \tilde{L} \hat{\otimes } \mhK[t]}\hspace{-10pt} , \ldots , \gamma_0 (t), \gamma_1  \}_{m+1}}_{\in \mmF_r  \tilde{L}\hat{\otimes }  \mhK[t]} \in \mmF_{r} \tilde{L}.
		\end{align}
		For an application of Equation \eqref{W14B14a} we need $\psi (a)$ rather than $U_\star (a)$, thus some additional work needs to be done.\\
		By definition, we have
		\begin{align}
			\label{DA6}
			\psi (a) \meqc{\eqref{W14B8}} U_\star (a) - \sum_{m=2}^{\infty} \frac{1}{m!} U^\prime (\ubr{a, \ldots, a}_{\text{m- times}}),
		\end{align}
		where the sum consists of only finitely many terms by assumption.\\
		Let us further investigate on the degree of filtration of the differential of the redundant terms. By using the definition of an $\infty$-morphisms of $\mmS L_\infty$ algebras (e.g. see Proposition 10.2.7. from \cite{Loday}) we know
		\begin{align}
			\label{DA7a}
			d_{\tilde{L}} \big( U^\prime (a, \ldots, a) \big)
		\end{align}
		to consist of terms of the form
		\begin{align}
			\label{DA7}
			\begin{aligned}
				U^\prime(a, \ldots, d_L(a), \ldots,a),~ U^\prime (a, \ldots,\{a, \ldots,a\},\ldots,a)~\mand\\
				\{U^\prime (a,\ldots,a), \ldots, U^\prime (a, \ldots,a)\}
			\end{aligned}
		\end{align}
		only.\\
		Let us explain why every of these three possible forms carries $\mmF_r \tilde{L}$ as the degree of filtration: First, $d_L (a)=0$ holds, due to $a$ being a Maurer-Cartan element on the Abelian $\mmS L_\infty$ algebra $L$. But also, $\{a,\ldots,a\}=0$ is obvious from $L$ being Abelian. Thus, neither of the first two terms has a non-vanishing contribution. Eventually from $U$ raising the degree of filtration by $r-1$ we clearly see
		\begin{align*}
			\ubr{\{\ubr{U^\prime (a, \ldots, a)}_{\in \mmF_{r-1} \tilde{L}} , \ldots ,\ubr{U^\prime (a, \ldots,a)}_{\in \mmF_{r-1} \tilde{L}} \}}_{\in \mmF_{2(r-1)}\tilde{L} \subset \mmF_r \tilde{L}}.
		\end{align*}
		Thus, according to this reasoning, it is safe to say that for every of the finitely many unwanted summands the degree of filtration of the differential lies in $\mmF_r \tilde{L}$. But also $U^\prime (a, \ldots,a) \in \mmF_{r-1} \tilde{L}$ holds, allowing us to make use of $H^0 ((\mmF_{2^t} \tilde{L})/(\mmF_r \tilde{L}) )=0$ for $t$ being the highest power of $2$ satisfying $2^t <r$.\\
		To be more precise, from $U^\prime (a,\ldots,a) \in \mmF_{r-1} \tilde{L} \subset \mmF_{2^t} \tilde{L}$ and $d_{\tilde{L}} (U^\prime (a, \ldots, a)) \in \mmF_r \tilde{L}$ we deduce 
		\begin{align}
			\label{DA8}
			\begin{aligned}
				\exists u &\in \mmF_{2^t} \tilde{L}~\mst\\
				U^\prime (a, \ldots , a) - d_{\tilde{L}} (u) &\in \mmF_r \tilde{L}.
			\end{aligned}
		\end{align}
		We do this for each of the (finitely many, by assumption) unwanted terms and thus make sure that for $u= \frac{1}{2!} u^{(2)} + \frac{1}{3!} u^{(3)} + \ldots$
		\begin{align}
			\label{DA9}
			\sum_{m=2}^{\infty} \frac{1}{m!} U^\prime (a, \ldots, a) -d_L (u) \in \mmF_r \tilde{L}
		\end{align}
		holds.\\
		Combining these results, we find
		\begin{align}
			\label{DA10}
			\begin{aligned}
				\psi (a) - d_{\tilde{L}} (\gamma_1 -u) \meqc{\eqref{W14B8}} \ubr{U_\star (a) -d_{\tilde{L}} (\gamma_1)}_{\minc{\eqref{DA5}} \mmF_r \tilde{L}} - \ubr{ \sum_{m=2}^{\infty} \frac{1}{m!} U^\prime (a, \ldots, a) - d_{\tilde{L}} (u)}_{\minc{\eqref{DA9}} \mmF_{r} \tilde{L}} \in \mmF_r \tilde{L}.			
			\end{aligned}	
		\end{align}
		For this reason, all the requirements of Equation \eqref{W14B14a} in the case of $p=0$, $a=a$ and $b$ set to $\gamma_1- u$, are satisfied.\\
		Thus, Equation \eqref{W14B14} implies
		\begin{align}
			\label{DA11}
			\begin{aligned}
				\exists x &\in \mmF_0 L~\mst\\
				a-d_L (x) &\in \mmF_1 L.
			\end{aligned}	
		\end{align}
		We may use this $x$ for constructing a 1-cell in $\mmMC(L)$. More precisely, let $\xi = \xi_0 (t) + dt~\xi_1$ be the rectified 1-cell in $\mmMC(L)$ defined by means of setting the starting point to $a$ and demanding
		\begin{align}
			\label{DA12}
			\xi_1 \meqd -x. 
		\end{align}
		Setting $a_1$ to be the endpoint of this 1-cell and integrating the corresponding (in the sense of Lemma \ref{Lemma W14B1}) IVP yields
		\begin{align}
			\label{DA13}
			\begin{aligned}
				a_1\meqd \xi_0 (1) &= a+ \int_{0}^{1} dt \Big( d_L (-x) + \ubr{\sum_{m=1}^{\infty} \frac{1}{m!} \{\xi_0 (t), \ldots, \xi _0 (t),-x\}_{m+1}}_{=0} \Big)\\[-8pt]
				&=\ubr{a- d_L (x)}_{\minc{\eqref{DA11}} \mmF_1 L} \in \mmF_1 L.
			\end{aligned}
		\end{align}
		\newpage
		\noindent\underline{Induction Step:}\hfill\\
		\underline{Construction of $a_{p+1}$:}\hfill\\
		We follow the same idea as in Section \ref{Bijection}, sometimes making minor adjustments due to the fact that $p \geq r$ can no longer be assumed.\\
		Because $a_p \in \mMC(L)$ and $L$ is Abelian, we find
		\begin{align*}
			d_L (a_p) + \ubr{\sum_{k=2}^{\infty} \frac{1}{k!} \{ a_p, \ldots, a_p \}_k }_{=0} =0,
		\end{align*}
		i.e.
		\begin{align}
			\label{W20B75}
			d_L (a_p) \in \mmF_{p+r} L
		\end{align}
		holds, trivially.\\
		From $\gamma^{(p)}$ connecting $0$ and $U_\star (a_p)$, $\gamma_0^{(p)}$ must be the solution of the corresponding IVP according to Lemma \ref{Lemma W14B1}. Thus, integration up to $t$ and $1$, respectively, leads to
		\begin{align}
			\label{W20B71}
			\gamma_0^{(p)} (t)= 0 + t d_{\tilde{L}} (\gamma_1^{(p)}) + \int_{0}^{t} dt_1~\sum_{m=1}^{\infty} \frac{1}{m!} \{ \gamma_0^{(p)} (t_1), \ldots, \gamma_0^{(p)} (t_1), \gamma_1^{(p)} \}_{m+1}
		\end{align} 
		and
		\begin{align}
			\label{W20B72}
			\ubr{U_\star (a_p)}_{\in \mmF_{p+(r-1)} \tilde{L}} \hspace{-5pt}= \gamma_0^{(p)} (1) = 0 + d_{\tilde{L}} (\gamma_1^{(p)}) + \int_{0}^{1} dt~\sum_{m=1}^{\infty} \frac{1}{m!} \{ \gamma_0^{(p)}(t), \ldots, \gamma_0^{(p)} (t), \gamma_1^{(p)} \}_{m+1},
		\end{align}
		where we used the facts that $a_p \in \mmF_p L$ and $U$ raises the degree of filtration by $r-1$.\\
		From this we may deduce in the usual manner
		\begin{align}
			\label{W20B73}
			d_{\tilde{L}} (\gamma_1^{(p)}) \in \mmF_{p+(r-1)} \tilde{L}
		\end{align}  
		and
		\begin{align}
			\label{W20B74}
			\gamma_0^{(p)} (t) \in \mmF_{p+(r-1)} \tilde{L} \hat{\otimes} \mhK[t].
		\end{align}
		Together with $U_\star (a_p) = \psi (a_p) + \mcO (\mmF_{p+r} \tilde{L})$ this clearly implies
		\begin{align}
			\label{W20B77}
			\psi (a_p) -d_{\tilde{L}} (\gamma_1^{(p)})  \in \mmF_{p+r} \tilde{L}
		\end{align}
		to hold.\\
		By assumption we have $a_p \in \mmF_p L$ and $ \gamma_1^{(p)} \in \mmF_p \tilde{L}$. Together with $d_L (a_p) \in \mmF_{p+r} L$ and $\psi (a_p ) - d_{\tilde{L}} (\gamma_1^{(p)}) \in \mmF_{p+r} \tilde{L}$ from Equations \eqref{W20B75} and \eqref{W20B77}, all the requirements of Equation \eqref{W14B14a} are satisfied.\\
		Thus, according to Equation \eqref{W14B14}, we may find:
		\begin{align}
			\label{W20B78}
			&\left\{
			\begin{aligned}
				\exists x &\in \mmF_{p-r+1} L\\
				\exists y &\in \mmF_{p-r+1} \tilde{L}
			\end{aligned}
			\right.\\
			\nonu \mst\\
			\label{W20B79}
			&\left\{
			\begin{aligned}
				d_L (x) &\in \mmF_p L\\
				\psi (x ) - d_{\tilde{L}} (y) &\in \mmF_p \tilde{L}
			\end{aligned}
			\right.\\	
			\nonumber \mand\\
			\label{W20B80}
			&\left\{
			\begin{aligned}
				a_p - d_L (x) &\in \mmF_{p+1}	L\\
				\gamma_1^{(p)} - \psi (x) + d_{\tilde{L}} (y) &\in \mmF_{p+1} \tilde{L}.
			\end{aligned}
			\right.	
		\end{align}
		Next, we define, according to Lemma \ref{Lemma W14B1}, a rectified 1-cell $\xi^{(p+1)}= \xi_0^{(p+1)} (t)+ dt~\xi_1^{(p+1)}$ in $\mmMC(L)$ by means of setting
		\begin{align*}
			\begin{aligned}
				\xi_0^{(p+1)} (0) &\meqd a_p\\
				\xi_1^{(p+1)} &\meqd -x.
			\end{aligned}
		\end{align*}
		Integration of the corresponding IVP up to some arbitrary $t$ results in
		\begin{align*}
			\xi_0^{(p+1)} (t) = \hspace{-5pt} \ubr{a_p}_{ \in \mmF_p L} \hspace{-5pt} + \int_{0}^{t} \hspace{-2pt} dt_1\Big( \hspace{-2pt}\ubr{d_L (-x)}_{\minc{\eqref{W20B79}} \mmF_p L} + \ubr{\sum_{m=1}^{\infty} \frac{1}{m!} \{ \xi_0^{(p+1)} (t_1) , \ldots, \xi_0^{(p+1)} (t_1) , -x  \}_{m+1}}_{=0} \Big),
		\end{align*}
		i.e.
		\begin{align}
			\label{W20B84}
			\xi_0^{(p+1)} (t) \in \mmF_p L \hat{\otimes} \mhK[t].
		\end{align}
		Further, we set $a_{p+1}$ to be the endpoint of this 1-cell and so we find for its degree of filtration
		\begin{align}
			\label{W20B85}
			a_{p+1} \meqd \xi_0^{(p+1)} (1) = \ubr{a_p - d_L (x)}_{\minc{\eqref{W20B80}} \mmF_{p+1} L} \in \mmF_{p+1} L.
		\end{align}
		\underline{Connecting 0 and $U_\star (a_{p+1})$:}\hfill\\
		Next, we construct a 1-cell in $\mmMC(\tilde{L})$ which connects 0 and $U_\star (a_{p+1})$. For doing so, we apply Lemma \ref{Lemma W19A2} with $\xi^{(p+1)}$ in the role of $\xi$ and $\beta = \beta_0 (t) + dt~\beta_1$ given by:
		\begin{align}
			\left\{
			\begin{aligned}
				\beta_0(t) &\meqd \gamma_0^{(p)} (1-t)\\[-3pt]
				\beta_1 &\meqd - \gamma_1^{(p)}.
			\end{aligned}
			\right.	
		\end{align}
		As a result we obtain a 1-cell $\kappa = \kappa_0 (t) + dt~\kappa_1 (t)$ in $\mmMC(\tilde{L})$ connecting 0 and $U_\star (a_{p+1})$ satisfying
		\begin{align}
			\label{W20B86}
			\kappa_1 (t) = \gamma_1^{(p)}- \psi (x)  + \mcO(\mmF_{p+1} \tilde{L} \hat{\otimes} \mhK[t]).
		\end{align}
		\underline{Adjusting the degree of filtration of the connection of $0$ and $U_\star (a_{p+1})$:}\hfill\\
		According to Equation \eqref{W20B86}, $\kappa_1(t)$ can also be rewritten as
		\begin{align}
			\label{W20B87}
			\kappa_1 (t) = s + q(t)
		\end{align}
		for
		\begin{align}
			\label{W20B88}
			s = \gamma_1^{(p)} - \psi (x)
		\end{align}
		(no time dependence) and 
		\begin{align}
			\label{W20B89}
			q(t) \in \mmF_{p+1} \tilde{L} \hat{\otimes} \mhK[t].
		\end{align}
		Since $\kappa (t)$ is a 1-cell in $\mmMC(\tilde{L})$ connecting $0$ and $U_\star (a_{p+1})$ it must be the solution of the corresponding IVP as discussed in Lemma \ref{Lemma W14B1}.\\
		Integration of the IVP up to $t$ and $1$, respectively, leads to
		\begin{align}
			\label{W20B90}
			\kappa_0 (t) = 0 + t d_{\tilde{L}} (s) + \int_{0}^{t} dt_1 \Big(\hspace{-7pt}\ubr{d_{\tilde{L}} (q(t_1))}_{\minc{\eqref{W20B89}} \mmF_{p+1} \tilde{L} \hat{\otimes}\mhK[t_1]} \hspace{-5pt} + \sum_{k=1}^{\infty} \frac{1}{k!} \{ \kappa_0 (t_1), \ldots, \kappa_0 (t_1) , \kappa_1 (t_1)   \}_{k+1} \Big)
		\end{align}
		and
		\begin{align}
			\label{W20B91}
			\ubr{U_\star (a_{p+1})}_{\in \mmF_{p+r} \tilde{L}} = \kappa_0 (1) = 0 + d_{\tilde{L}} (s) + \int_{0}^{1} dt~\Big( \hspace{-7pt}\ubr{d_{\tilde{L}} (q(t))}_{\minc{\eqref{W20B89}} \mmF_{p+1} \tilde{L} \hat{\otimes}\mhK[t]} \hspace{-5pt} + \sum_{k=1}^{\infty} \frac{1}{k!} \{\kappa_0 (t), \ldots, \kappa_0 (t), \kappa_1 (t)  \}_{k+1} \Big).
		\end{align}
		The usual analysis of the degrees of filtration yields
		\begin{align}
			\label{W20B92}
			d_{\tilde{L}} (s) \in \mmF_{p+1} \tilde{L}
		\end{align}
		and
		\begin{align}
			\label{W20B93}
			\kappa_0 (t) \in \mmF_{p+1} \tilde{L} \hat{\otimes} \mhK[t].
		\end{align}
		By the same arguments as in the proof of Lemma \ref{Lemma W14B2} we find
		\begin{align}
			\label{W20B94}
			\mu = \kappa + d_{\tilde{L}}^\kappa (dt~ y)
		\end{align}
		with $y$ as in Equation \eqref{W20B78} to be a 1-cell in $\mmMC(\tilde{L})$ connecting $0$ and $U_\star (a_{p+1})$ as well.\\
		Making use of $dt^2 =0$, $\mu= \mu_0 (t)+ dt~\mu_1 (t)$ amounts to
		\begin{align}
			\label{W20B96}
			\mu = \kappa_0 (t) + \Big( \kappa_1 (t) + d_{\tilde{L}} (y) + \ubr{\{\hspace{-15pt}\ubr{\kappa_0 (t)}_{\minc{\eqref{W20B93}} \mmF_{p+1} \tilde{L} \hat{\otimes} \mhK[t]} \hspace{-15pt},y\}_2 + \text{higher filtration degrees}}_{\in \mmF_{p+1} \tilde{L} \hat{\otimes} \mhK[t]} \Big) dt.
		\end{align}
		By writing out $\kappa_1 (t)$ in the sense of Equation \eqref{W20B86} we obtain
		\begin{align*}
			\mu_1 (t) = \ubr{\gamma_1^{(p)}- \psi (x) + d_{\tilde{L}} (y)}_{\minc{\eqref{W20B80}} \mmF_{p+1} \tilde{L} } + \mcO (\mmF_{p+1} \tilde{L} \hat{\otimes} \mhK[t]) \in \mmF_{p+1} \tilde{L} \hat{\otimes } \mhK[t].
		\end{align*}
		Applying Lemma B2 from \cite{DolgushevLinfty} yields that there exists a rectified 1-cell $\gamma^{(p+1)}= \gamma_0^{(p+1)} (t) + dt~ \gamma_1^{(p+1)}$ in $\mmMC(\tilde{L})$, which connects 0 with $U_\star (a_{p+1})$ and satisfies $\gamma_1^{(p+1)} \in \mmF_{p+1} \tilde{L}$.\\
		\subsection{Higher Homotopy Groups}
		\label{2nd Case Higher}
		In this section, we want to prove that for every Maurer-Cartan element $\tau \in \mMC(L)$ and for every $n \geq 1$, $\mmMC(U)$ induces a group isomorphism
		\begin{align}
			\pi_n (\mmMC(L), \tau) \stackrel{\cong}{\rightarrow} \pi_n (\mmMC(\tilde{L}), U_\star (\tau)).
		\end{align}
		Here, we mainly follow the same steps as in Section \ref{Higher}.\\
		Again we use that according to Theorem 5.5 from \cite{Berglund}, for every $n \geq 1$ there is a group isomorphism
		\begin{align}
			B_{n-1}^\tau : H^{-n+1}(L^\tau) \stackrel{\cong}{\rightarrow} \pi_n (\mmMC(L),\tau)
		\end{align}
		and the following diagram commutes
		\[
		\begin{tikzcd}
			H^{-n+1} (L^\tau) \arrow[rr, "B_{n-1}^\tau"] \arrow[dd, "H^{-n+1} (\psi^\tau)"] &  & {\pi_n (\mmMC(L),\tau)} \arrow[dd, "{\mmMC(U)}"] \\
			&  &                                                       \\
			H^{-n+1} (\tilde{L}^{U_\star (\tau)}) \arrow[rr, "B_{n-1}^{U_\star (\tau)}"]    &  & {\pi_n (\mmMC(\tilde{L}), U_\star (\tau))}           
		\end{tikzcd}
		\]
		If we can show that the vertical left arrow is a group isomorphism, then $\mmMC(U)$ would be an isomorphism as well.\\
		Once more, due to the Eilenberg-Moore Comparison Theorem, for $H^{-n+1} (\psi^\tau)$ being an isomorphism, it is enough to show that $\psi^\tau$ is a quasi-isomorphism on the $r-1$st page of the spectral sequence.\\
		First, we show that every Maurer-Cartan element $\tau \in \mMC(L)$ is gauge equivalent to a Maurer-Cartan $\tilde{\tau} \in \mMC(L)$ subject to $\tilde{\tau} \in \mMF_1 L$.\\
		Let $\tau$ be in $\mMC(L)$. According to our proof of surjectivity, there exists a $\tilde{\tau} \in \mMC(L)$, such that $\tilde{\tau} \in \mmF_1 L$ and $U_\star (\tilde{\tau}) \sim
		U_\star (\tau)$. But from $U_\star$ being injective amongst the connected components this clearly implies $\tilde{\tau} \sim \tau$.\\
		Therefore, we can w.l.o.g. assume $\tau \in \mmF_1 (L)$ and it all boils down to proving the linear part $\psi^\tau$ of the twisted morphism $U^\tau: L^\tau \to \tilde{L}^{U_\star (\tau)}$ being a quasi-isomorphism on the $r-1$st page of the spectral sequence as well.\\
		From $L$ being Abelian, the twisting has no effect on the domain at all.\\
		Because of $U_\star (\tau) \in \mmF_r \tilde{L}$ and
		\begin{align}
			\psi^\tau (a) = \psi (a) + \sum_{k=1}^{\infty} \ubr{U^\prime (\ubr{\tau}_{\in \mmF_1 l}, \ldots, \tau , \ubr{a}_{\in \mmF_p L})}_{\in \mmF_{r-1+1+p} \tilde{L} = \mmF_{p+r} \tilde{L}} = \psi(a) + \mcO (\mmF_{p+r} \tilde{L})
		\end{align}
		for $a \in \mmF_p L$, we can follow exactly the same steps as in Section \ref{Higher} for the other lines of Equations \eqref{W14B14a} and \eqref{W14B14}.
	\end{proof}
	
	\section{Lemma for Concatenating 1-Cells}\hfill\\
	The following Lemma is a slightly adjusted version of Proposition 3.3 from \cite{DolgushevLinfty}.
	\begin{lemma}
		\label{Lemma W19A2}	\hfill\\
		Let $L,\tilde{L}$ be two $\mmS L_\infty$ algebras endowed with descending, bounded above and complete filtrations
		\begin{align*}
			\begin{aligned}
				L= \mmF_1 L \supset \mmF_2 L \supset \mmF_3 L \supset \ldots\\
				\tilde{L} = \mmF_1 \tilde{L} \supset \mmF_2 \tilde{L} \supset \mmF_3 \tilde{L} \supset \ldots
			\end{aligned}
		\end{align*}
		compatible with the $\mmS L_\infty$ structures\footnote{In the setting of Theorem \ref{Theorem ZA1} the lemma also holds for the filtration of $L$ starting at $\mmF_0 L$ instead.}.\\
		Let $U: L \rightarrow \tilde{L}$ be an $\infty$- morphism of $\mmS L_\infty$ algebras compatible with the filtrations and denote its linear part by $\psi$.\\
		Let $m_i$ for $i=0,1,2$ be Maurer-Cartan elements in $L$ and denote their images under $U_\star$, which are also Maurer-Cartan elements in $\tilde{L}$, by $\tilde{m}_i$. Moreover, let $\tilde{m}_1$ satisfy $\tilde{m}_1 \in \mmF_p \tilde{L}$.\\
		Let $\beta = \beta_0 (t ) + dt~\beta_1$ be a rectified 1-cell in $\mmMC(\tilde{L})$ which connects the two Maurer-Cartan elements $\tilde{m}_1$ and $\tilde{m}_0$ and has $\beta_1 \in \mmF_p \tilde{L}$.\\
		Let $\xi = \xi_0 (t) + dt~ \xi_1$ be a rectified 1-cell in $\mmMC(L)$ which connects the two Maurer-Cartan elements $m_1$ and $m_2$.\\
		In addition, let us assume $d_L (\xi_1) \in \mmF_p L$ and $\xi_0 (t) \in \mmF_p L \hat{\otimes } \mhK[t]$.\\
		Then there exists a 1-cell
		\begin{align}
			\label{W19A1}
			\kappa = \kappa_0 (t) + dt ~\kappa_1 (t)
		\end{align}
		in $\mmMC(\tilde{L})$, connecting $\tilde{m}_0$ with $\tilde{m}_2$ and satisfying
		\begin{align}
			\label{W19A2}
			\kappa_1 (t) = - \beta_1 + \psi (\xi_1) + \mcO(\mmF_{p+1}  \tilde{L} \hat{\otimes } \mhK[t]).
		\end{align}
	\end{lemma}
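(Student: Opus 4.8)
The plan is to obtain $\kappa$ as a concatenation of two $1$-cells in $\mmMC(\tilde{L})$: the reverse of $\beta$, which runs from $\tilde{m}_0$ to $\tilde{m}_1$, followed by the image of $\xi$ under $\mmMC(U)$, which runs from $U_\star(m_1)=\tilde{m}_1$ to $U_\star(m_2)=\tilde{m}_2$. This mirrors the proof of Proposition 3.3 from \cite{DolgushevLinfty}; the new ingredient is the bookkeeping of filtration degrees needed to pin down $\kappa_1(t)$ modulo $\mmF_{p+1}\tilde{L}\hat{\otimes}\mhK[t]$.

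First I would reverse $\beta$. Reparametrising $t\mapsto 1-t$ and invoking Lemma \ref{Lemma W14B1}, the element $\beta^{\mathrm{rev}}\meqd\beta_0(1-t)+dt\,(-\beta_1)$ is a rectified $1$-cell in $\mmMC(\tilde{L})$ connecting $\tilde{m}_0$ with $\tilde{m}_1$ and having $dt$-coefficient exactly $-\beta_1\in\mmF_p\tilde{L}$. Integrating the IVP for $\beta$ itself, with starting point $\tilde{m}_1\in\mmF_p\tilde{L}$ and using $\beta_1\in\mmF_p\tilde{L}$, also yields $\beta_0(t)\in\mmF_p\tilde{L}\hat{\otimes}\mhK[t]$, so in particular $\tilde{m}_0\in\mmF_p\tilde{L}$.

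Next I would push $\xi$ forward. By Equation \eqref{W15G1} the $1$-cell $\eta\meqd U^{(1)}_\star(\xi)$ in $\mmMC(\tilde{L})$ is
\begin{align*}
\eta \;=\; U_\star\big(\xi_0(t)\big) \;+\; dt\sum_{k\geq 0}\frac{1}{k!}\,U'\big(\xi_0(t),\ldots,\xi_0(t),\xi_1\big)
\end{align*}
(with $k$ copies of $\xi_0(t)$), and it connects $U_\star(m_1)=\tilde{m}_1$ with $U_\star(m_2)=\tilde{m}_2$. Since $\xi_0(t)\in\mmF_pL\hat{\otimes}\mhK[t]$ and $U'$ is compatible with the filtrations (Equation \eqref{W14B6}), every summand with $k\geq 1$ lands in $\mmF_{p+1}\tilde{L}\hat{\otimes}\mhK[t]$; hence the $dt$-coefficient of $\eta$ equals $\psi(\xi_1)$ modulo $\mmF_{p+1}\tilde{L}\hat{\otimes}\mhK[t]$, and also $U_\star(\xi_0(t))\in\mmF_p\tilde{L}\hat{\otimes}\mhK[t]$.

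Finally, $\beta^{\mathrm{rev}}$ and $\eta$ are composable (both pass through $\tilde{m}_1$), so, following the construction in the proof of Proposition 3.3 of \cite{DolgushevLinfty}, I would assemble an explicit $2$-cell of $\mmMC(\tilde{L})$, i.e. an element of $\mMC(\tilde{L}\hat{\otimes}\Omega_2)$, two of whose faces are $\beta^{\mathrm{rev}}$ and $\eta$, and take the third face to be $\kappa$; it connects $\tilde{m}_0$ with $\tilde{m}_2$. (That such a $\kappa$ exists at all is immediate from $\mmMC_\bullet(\tilde{L})$ being Kan, by Proposition 4.1 of \cite{DolgushevEnhancement}; the explicit filler is only needed for the next point.) It then remains to read off $\kappa_1(t)$ from the filler and to check that $\kappa_1(t)-\big(-\beta_1+\psi(\xi_1)\big)$ lies in $\mmF_{p+1}\tilde{L}\hat{\otimes}\mhK[t]$. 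Every correction term produced by the concatenation is a bracket or a $U'$-term in which one of the factors is $\beta_1$, or $\xi_0(t)$, or $d_{\tilde{L}}\psi(\xi_1)=\psi(d_L\xi_1)$ --- each of filtration degree at least $p$, the last one precisely by the hypothesis $d_L(\xi_1)\in\mmF_pL$ --- paired with at least one further argument of filtration degree at least $1$, so the degrees add up to at least $p+1$. Carrying out this filtration bookkeeping cleanly through the explicit $2$-cell is the main obstacle; the purely qualitative statement (transitivity of being connected by a $1$-cell) is a formality, whereas the controlled form of $\kappa_1(t)$ forces one to track filtration degrees all the way through the concatenation.
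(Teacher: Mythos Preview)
Your proposal is correct and follows essentially the same route as the paper: push $\xi$ forward via $U^{(1)}_\star$, build the explicit $2$-cell filler of the horn in $\mmMC(\tilde{L})$ \`a la Proposition~3.3 of \cite{DolgushevLinfty} (using the $\mMC\times\mStub$ bijection of Lemma~A1 there), and read off $\kappa$ as the remaining face. Two minor remarks: the preliminary reversal of $\beta$ is unnecessary, since the paper places $\beta$ and $\chi=U^{(1)}_\star(\xi)$ directly as the two edges emanating from the common vertex $\tilde{m}_1$; and the filtration bookkeeping you sketch is made precise in the paper by the fixed-point identity $\tilde{\eta}=\tilde{m}_1+\tilde{\nu}-\sum_{m\geq 2}\tfrac{1}{m!}h_2^1\{\tilde{\eta},\ldots,\tilde{\eta}\}_m$, from which one isolates the single ``dangerous'' pairing $d_{\tilde{L}}(\lambda(t_2))$ against $dt_2\,\partial_{t_2}\lambda(t_2)$ and kills it using exactly the hypothesis $d_L(\xi_1)\in\mmF_pL$, as you anticipated.
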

	\begin{proof}[\textbf{Proof of Lemma \ref{Lemma W19A2}}]
		\hfill\\                           
		We set
		\begin{align}
			\label{W14B131}
			\chi = \chi_0 (t) + dt~ \chi_1 (t) \meqd U_\star^{(1)} (\xi_0 (t) + dt~\xi_1), 
		\end{align}
		i.e. $\chi$ is a 1-cell in $\mmMC(\tilde{L})$ which connects $\tilde{m}_1$ and $\tilde{m}_2$.\\
		As a consequence, we have the horn
		\[
		\begin{tikzcd}
			& \tilde{m}_1 \arrow[ld, "\chi = \chi_0 (t_2) + dt_2~\chi_1 (t_2)"'] \arrow[rd, "\beta = \beta_0 (t_0) + dt_0~\beta_1"] &   \\
			\tilde{m}_2 &                                                                                                                                                      & \tilde{m}_0
		\end{tikzcd},
		\]
		which is the same as at the starting point of Proposition 3.3 from \cite{DolgushevLinfty}.\\
		For our next considerations, we will heavily rely on the notation of Appendix A from \cite{DolgushevLinfty}, without elaborating further on the specifics of it.\\
		According to Lemma A1 from \cite{DolgushevLinfty}, we know that
		\begin{align}
			\label{W14B133}
			\left\{
			\begin{aligned}
				\mMC(\tilde{L} \hat{\otimes} \Omega_n) &\stackrel{\cong}{\rightarrow} \mMC(\tilde{L}) \times \mStub_n^i (\tilde{L})\\
				a &\mapsto \big( \epsilon_i^{n} (a), (d_{\tilde{L}} + d_{\Omega_n}) \circ h_n^i (a) \big)
			\end{aligned}
			\right.	
		\end{align}
		is a bijection.\\
		For the 1-cell $\beta$ the corresponding (in the sense of Equation \eqref{W14B133}) $\mMC(\tilde{L})$ and $\mStub_1^1 (\tilde{L})$ elements are found to be
		\begin{align*}
			\epsilon_1^1 (\beta) = \tilde{m}_1
		\end{align*}
		and
		\begin{align*}
			(d_{\tilde{L}}+ d_{\Omega_1}) h_1^1 (\beta) = (d_{\tilde{L}}+ d_{\Omega_1}) (t_0 \beta_1).
		\end{align*}
		For $\chi$,
		\begin{align*}
			\epsilon_1^1 (\chi) = \tilde{m}_1
		\end{align*}
		is immediate.\\
		For the $\mStub_1^1 (\tilde{L})$ element corresponding to $\chi$, explicit calculation is quite difficult due to the time dependence of its $dt$ coefficient.\\
		Hence, most of the time we simply write
		\begin{align*}
			(d_{\tilde{L}}+ d_{\Omega_1}) h_1^1 (\chi) = (d_{\tilde{L}}+ d_{\Omega_1}) \lambda (t_2).
		\end{align*}
		However, having a second look unravels
		\begin{align}
			\label{W14B137a}
			\lambda (t_2) = h_1^1 (\chi) = t_2 \int_{0}^{1} du~ \chi_1 (u t_2),
		\end{align}
		i.e. there is a $t_2$ pre-factor.\\
		We define
		\begin{align}
			\label{W14B138}
			\tilde{\nu} \meqd (d_{\tilde{L}} + d_{\Omega_2}) ( t_0 \beta_1 + \lambda (t_2)),
		\end{align}
		which is exactly the sum of the two stub elements associated to the 1-cells on the edges of the horn.\\
		A short calculation confirms $\tilde{\nu}$ to lie in $\mStub_2^1 (\tilde{L})$.\\
		Let $\tilde{\eta}$ be the 2-cell corresponding to $(\tilde{m}_1, \tilde{\nu})$ (in the sense of Equation \eqref{W14B133}).\\
		By recalling the explicit form of $h_1^1(\beta)$ and the $t_2$ prefactor of $\lambda(t_2)$, it is clear that both
		\begin{align*}
			\tilde{\nu} \vert_{t_2=0} = (d_{\tilde{L}} + d_{\Omega}) (t_0 \beta_1)
		\end{align*}
		and
		\begin{align*}
			\tilde{\nu} \vert_{t_0=0} = (d_{\tilde{L}} + d_{\Omega}) (\lambda (t_2))
		\end{align*}
		hold.\\
		But, according to Equation \eqref{W14B133}, this directly implies
		\begin{align*}
			\tilde{\eta} \vert_{t_2=0} = \beta
		\end{align*}
		and
		\begin{align*}
			\tilde{\eta} \vert_{t_0=0} = \chi.
		\end{align*}
		So, we have proved $\tilde{\eta}$ to be the filler of the horn.\\
		Our next task is to analyse the filtration degree at the bottom edge $(t_1 =0)$.\\
		According to Appendix A of \cite{DolgushevLinfty}, the 2-cell $\tilde{\eta}$ corresponding to $(\tilde{m}_1 , \tilde{\nu})$ is found to be the limiting value of the sequence $\{\eta^{(k)}\}_{k \geq 0}$ defined by
		\begin{align}
			\label{W14B143}
			\left\{
			\begin{aligned}
				\tilde{\eta}^{(0)} &\meqd \tilde{m}_1 + \tilde{\nu}\\
				\tilde{\eta}^{(k+1)} &\meqd \tilde{\eta}^{(0)} - \sum_{m=2}^{\infty} \frac{1}{m!} h_2^1 \{ \tilde{\eta}^{(k)}, \ldots, \tilde{\eta}^{(k)} \}_m.
			\end{aligned}
			\right.	
		\end{align}
		Thus in particular,
		\begin{align}
			\label{W14B144}
			\tilde{\eta} = \tilde{m}_1 + \tilde{\nu} -\sum_{m=2}^{\infty} \frac{1}{m!} h_2^1 \{\tilde{\eta}, \ldots, \tilde{\eta}\}_{m}
		\end{align}
		holds.\\
		Let us now prove that
		\begin{align}
			\label{W14B145}
			\tilde{\eta} - \tilde{m}_1 - \tilde{\nu} \in \mmF_{p+1} \tilde{L} \hat{\otimes } \Omega_2.
		\end{align}
		By Equation \eqref{W14B144}, it is immediate that
		\begin{align}
			\label{W14B146}
			\tilde{\eta} - \tilde{m}_1 - \tilde{\nu} = - \sum_{m=2}^{\infty} \frac{1}{m!} h_2^1 \{\tilde{\eta}, \ldots, \tilde{\eta}\}_m.
		\end{align}
		From the definition of $h_2^1$, only terms involving a $dt_i$ survive.\\
		So, it boils down to an inspection of the filtration degrees of the possible combinations of terms emerging from $\tilde{m}_1$ and $\tilde{\nu}$, which have at least one $dt_i$ term.\\
		A short investigation of $\tilde{\nu}$ shows that the only combination of terms involving a $dt_i$ and not trivially having filtration degree $\mmF_{p+1} \tilde{L}$ is the one of $d_{\tilde{L}} (\lambda(t_2))$ with $dt_2 \frac{\partial \lambda (t_2)}{\partial t_2}$ (among others we used the fact that $\beta_1 \in \mmF_p \tilde{L}$ by assumption).\\
		We continue by discussing why $d_{\tilde{L}} ( \lambda (t_2)) \in \mmF_p \tilde{L} \hat{\otimes } \mhK[t_2]$ holds.\\
		In Equation \eqref{W14B137a} we have defined $\lambda(t_2)$ to be the integral of $\chi_1 (t)$. As integration and $d_{\tilde{L}}$ can be swapped, it is clear that $d_{\tilde{L}} (\chi_1 (t_2)) \in \mmF_p \tilde{L} \hat{\otimes } \mhK[t_2]$ would suffice.\\
		In Equation \eqref{W14B131} we have defined $\chi_1 (t)$ to be the $dt$ coefficient of $U_\star^{(1)} (\xi_0 (t) +dt~ \xi_1 )$ and so $\chi_1$ is of the form
		\begin{align}
			\label{W14B147}
			\chi_1(t) = \sum_{m=1}^{\infty} \frac{1}{(m-1)!} U^\prime (\underbrace{ \xi_0 (t), \ldots, \xi_0 (t),\xi_1}_{\text{m elements}}).
		\end{align}
		From Equation \eqref{W14B147} and Proposition 10.2.7. from \cite{Loday}, it is immediate that $d_{\tilde{L}} (\chi_1 (t_2))$ only consists of terms, where $U^\prime$ has in its argument a
		\begin{align}
			\label{W14B148}
			d_L (\xi_1)
		\end{align}
		term, and/or a
		\begin{align}
			\label{W14B149}
			\xi_0 (t)
		\end{align}
		term or a $d_L$ derivation thereof.\\
		But by assumption both carry filtration degree $\mmF_p L$ and $\mmF_p L \hat{\otimes} \mhK[t]$, respectively. Consequently, this proves $d_{\tilde{L}} (\lambda (t_2)) \in \mmF_p \tilde{L} \hat{\otimes } \mhK[t_2]$ and thus the r.h.s of Equation \eqref{W14B146} in any case lies in $\mmF_{p+1} \tilde{L} \hat{\otimes } \Omega_2$, i.e. Equation \eqref{W14B145} holds.\\
		Due to Equation \eqref{W14B147} and $\xi_0 (t) \in \mmF_p L \hat{\otimes } \mhK[t]$, one can directly compute
		\begin{align*}
			\chi_1 (t_2) - \psi (\xi_1) \in \mmF_{p+1} \tilde{L} \hat{\otimes } \mhK[t_2].
		\end{align*}
		Applying $h_1^1$ on both sides (formally, we should consider the whole expression carrying a $dt_2$) leads to
		\begin{align}
			\label{W14B151}
			\lambda (t_2) - t_2 \psi (\xi_1) \in \mmF_{p+1} \tilde{L} \hat{\otimes } \mhK[t_2].
		\end{align}
		This allows us to compute $\tilde{\nu}$ evaluated at the bottom boundary:
		\begin{align}
			\label{W14B152}
			\left\{
			\begin{aligned}
				&\tilde{\nu} \vert_{t_1=0} \meqc{\eqref{W14B138}}  \left(t_0 d_{\tilde{L}} (\beta_1) + dt_0~ \beta_1 + (d_{\tilde{L}} + d_{\Omega_2}) (\lambda (t_2)) \right) \vert_{t_1=0}\\
				&\hspace{-8pt}\meqc{\eqref{W14B151}} \left( t_0 d_{\tilde{L}} (\beta_1) + dt_0~\beta_1 + (d_{\tilde{L}} + d_{\Omega_2}) (t_2 \psi (\xi_1) + \mcO(\mmF_{p+1} \tilde{L} \hat{\otimes} \mhK[t_2])) \right)\hspace{-2pt} \vert_{t_1=0} \\
				&=(1-t_2) d_{\tilde{L}} (\beta_1) + t_2 d_{\tilde{L}} (\psi (\xi_1)) + dt_2 \left(\psi (\xi_1)  - \beta_1 \right) + \mcO(\mmF_{p+1} \tilde{L} \hat{\otimes} \Omega_1).
			\end{aligned}
			\right.	
		\end{align}
		The bottom edge of the 2-cell $\tilde{\eta}$ defines a 1-cell by means of
		\begin{align}
			\label{W14B153}
			\kappa = \kappa_0 (t_2) +dt_2~ \kappa_1 (t_2) \meqd \tilde{\eta} \vert_{t_1=0},
		\end{align}
		i.e. we arrive at the situation
		\[
		\begin{tikzcd}
			& \tilde{m}_1 \arrow[rd, "\beta = \beta_0 (t_0) + dt_0~\beta_1"] \arrow[ld, "\chi =\chi_0 (t_2) + dt_2~\chi_1 (t_2)"'] &                                                                                  \\
			\tilde{m}_2 &                                                                                                                      & \tilde{m}_0 \arrow[ll, "\kappa = \kappa_0 (t_2) + dt_2~\kappa_1 (t_2)"]
		\end{tikzcd}.
		\]
		On one hand, we know from Equations \eqref{W14B145} and \eqref{W14B153} that
		\begin{align}
			\label{W14B154}
			\kappa_0 (t_2) + dt_2 ~\kappa_1 (t_2) \meqc{\eqref{W14B153}} \tilde{\eta} \vert_{t_1=0} \meqc{\eqref{W14B145}} \tilde{m}_1 + \tilde{\nu} \vert_{t_1=0} +\mcO (\mmF_{p+1} \tilde{L} \hat{\otimes } \Omega_1).
		\end{align}
		On the other hand, we can make use of Equation \eqref{W14B152} for computing the right-hand side of Equation \eqref{W14B154}, which leads to
		\begin{align*}
			\begin{aligned}
				&\kappa_0 (t_2) + dt_2~\kappa_1 (t_2)\\
				&=\tilde{m}_1 + (1-t_2) d_{\tilde{L}} (\beta_1) + t_2 d_{\tilde{L}} (\psi (\xi_1)) + dt_2 \left(\psi (\xi_1)  - \beta_1 \right) + \mcO(\mmF_{p+1} \tilde{L} \hat{\otimes} \Omega_1).
			\end{aligned}
		\end{align*}
		Therefore, we may deduce
		\begin{align}
			\label{W14B156}
			\kappa_1 (t)= -\beta_1 +\psi (\xi_1) + \mcO(\mmF_{p+1} \tilde{L} \hat{\otimes }\mhK[t]).
		\end{align}\\
		So, we have found a 1-cell $\kappa = \kappa_0 (t) + dt~ \kappa_1 (t)$ in $\mmMC(\tilde{L})$, connecting $\tilde{m}_0$ with $\tilde{m}_2$ and satisfying Equation \eqref{W19A2}.\\
	\end{proof}
	\bibliographystyle{plain}
	\bibliography{GMT}
\end{document}